\def\XXint#1#2#3{{\setbox0=\hbox{$#1{#2#3}{\int}$ }
\vcenter{\hbox{$#2#3$ }}\kern-.59\wd0}}
\newcommand{\edge}{{\scriptstyle\mid}}
\newcommand{\grad}{\nabla}
\renewcommand{\div}{\grad\mkern-3mu\cdot\mkern-2mu}
\newcommand{\laplace}{\Delta}
\newcommand{\N}{\mathbb{N}}
\newcommand{\cH}{\ensuremath{\mathcal H}}
\newcommand{\F}{{\mathcal F}}
\newcommand{\R}{\mathbb{R}}
\DeclareMathOperator{\ac}{ac}
\DeclareMathOperator{\diam}{diam}
\DeclareMathOperator{\iso}{iso}
\DeclareMathOperator{\Lip}{Lip}
\newcommand{\cD}{\ensuremath{\mathcal{D}}}
\newcommand{\cP}{\mathcal{P}}
\DeclareMathAlphabet{\mathup}{OT1}{\familydefault}{m}{n}
\newcommand{\dx}[1]{\mathop{}\!\mathup{d} #1}
\newcommand{\dt}{\mathop{}\!\delta t}
\newcommand{\pderiv}[3][]{\frac{\mathop{}\!\mathup{d}^{#1} #2}{\mathop{}\!\mathup{d} #3^{#1}}}
\newcommand{\eps}{\varepsilon}
\newcommand{\dist}{\text{\rm dist}}
\newtheorem{proposition}{Proposition}
\newtheorem*{proposition*}{Proposition}
\newtheorem{theorem}{Theorem}
\newtheorem{lemma}{Lemma}
\newtheorem{remark}{Remark}
\DeclarePairedDelimiter{\abs}{\lvert}{\rvert}
\DeclarePairedDelimiter{\norm}{\lVert}{\rVert}
\DeclarePairedDelimiter{\bra}{(}{)}
\DeclarePairedDelimiter{\set}{\{}{\}}
\newcommand{\cE}{\mathcal{E}}
\newcommand{\cF}{\mathcal{F}}
\newcommand{\cS}{\mathcal{S}}
\newcommand{\cT}{\mathcal{T}}
\definecolor{darkblue}{rgb}{0,0,0.6}
\begin{document}

\title{The Scharfetter--Gummel scheme for aggregation-diffusion equations}

\author{Andr\'e Schlichting\textsuperscript{*}}
\email{a.schlichting@uni-muenster.de}
\author{Christian Seis\textsuperscript{*}}
\address[*]{Institut f\"ur Analysis und Numerik, Universit\"at M\"unster, 48149 M\"unster, Germany.}
\email{seis@uni-muenster.de}

\date{\today}

\begin{abstract}
In this paper, we propose a finite-volume scheme for aggregation-diffusion equations based on a Scharfetter--Gummel approximation of the quadratic, nonlocal flux term. This scheme is analyzed concerning well-posedness and convergence towards solutions to the continuous problem. Also, it is proven that the numerical scheme has several structure-preserving features. More specifically, it is shown that the discrete solutions satisfy a free-energy dissipation relation analogous to the continuous model. Consequently, the numerical solutions converge in the large time limit to stationary solutions, for which we provide a thermodynamic characterization.
Numerical experiments complement the study.
\end{abstract}
\keywords{finite volume scheme, aggregation-diffusion equation, structure-preserving, free-energy dissipation relation, convergence, large time limit}
\subjclass[2010]{}

\maketitle


\section{Introduction}

\subsection{The aggregation-diffusion equation}

We consider the aggregation-diffusion equation in a smoothly bounded domain $\Omega$ in $\R^d$, describing the evolution of a probability density $\rho \in \cP(\Omega)$ according to
\begin{equation}\label{e:agg-diff}
  \partial_t \rho = \nabla\cdot \bra*{ \kappa \nabla \rho + \rho \nabla W \ast \rho} \:,
\end{equation}
where $\kappa>0$ is the diffusion constant and $W: \R^d \to \R$ an interaction potential. The convolution in \eqref{e:agg-diff} is to be understood in $\Omega$, that is
\[
\bra*{W\ast\rho}(x) = \int_{\Omega} W(x-y)\rho(y)\dx{y}.
\]
Moreover, in order to preserve the total mass, we have to impose the no-flux boundary condition
\begin{equation}
\label{bc}
\kappa \partial_{\nu}\rho + \rho\partial_{\nu} W\ast \rho=  0\qquad\text{on }\partial \Omega,
\end{equation}
where $\nu$ denotes the outer normal vector on $\partial \Omega$. The initial datum will be denoted by~$\rho_0$, and we assume that it is a probability distribution, $\rho_0\in \cP(\Omega)$. 

Equation~\eqref{e:agg-diff} arises as the infinite particle limit from systems of weakly interacting diffusions, first analysed by McKean~\cite{McKean1966,McKean1967}. It is rigorously shown in~\cite{Oelschlager1984,Sznitman1991,Stevens2000} that the law of the empirical distribution of the particle system converges to the solution $\rho$ of~\eqref{e:agg-diff} in the so-called mean field limit.

The derivation already shows that~\eqref{e:agg-diff}, maybe with an additional external potential, arises in many applications in which interactions between particles or agents are present. The examples range from opinion dynamics ~\cite{HegselmannKrause2002}, granular materials ~\cite{BCCP98,CarrilloMcCannVillani2003,BGG2013} and mathematical biology ~\cite{KellerSegel1971,Stevens2000,BCM2007} to statistical mechanics ~\cite{martzel2001mean}, galactic dynamics~\cite{binney2008}, liquid-vapor transitions~\cite{LebowitzPenrose1966,conlon2017}, plasma physics~\cite{bittencourt1986fund}, and synchronization ~\cite{kuramoto1981rhythms}.

In these models, all the physical properties of the underlying system are encoded through the interaction potential $W: \R^d\to \R$, which describes attractive and repulsive forces. We analyze the common case, where the force coming from the interaction potential between particles is antisymmetric and hence $W$ is assumed to be symmetric. In addition, we assume $W$ to be globally Lipschitz continuous and continuously differentiable away from the origin. These assumptions imply existence and uniqueness of weak solutions to~\eqref{e:agg-diff} by standard arguments (see for instance~\cite[Theorem 2.2]{CarrilloGvalaniPavliotisSchlichting2019}), but rule out singular potentials blowing up at the origin, like the Newtonian potential for the Keller--Segel model~\cite{KellerSegel1971}. In particular, $W$ is continuous on a bounded domain and thus finite, and we are allowed to add a constant, if necessary, in order to assure nonnegativity without effecting \eqref{e:agg-diff}. In summary, for later reference, our assumptions on the interaction potential are
\begin{align}
\label{A0} \tag{A1} &W(x) = W(-x) \text{ and } W(x) \geq 0 \:; \\
\label{A2}\tag{A2} &W\in C^1(\R^d \setminus\set{0}) \:;\\
\label{A3}\tag{A3} &W \text{ is Lipschitz-continuous.}
\end{align}

The aggregation-diffusion equation~\eqref{e:agg-diff} has a free energy functional acting as Lyapunov function for the evolution. It is defined as the sum of entropy and interaction energy,
\begin{equation}\label{e:free_energy}
 \cF(\rho) = \kappa \int_{\Omega} \rho(x) \log \rho(x) \dx{x} + \frac{1}{2} \iint_{\Omega\times \Omega} W(x-y) \rho(x) \rho(y)\dx{x} \dx{y} \:.
\end{equation}
We note that thanks to the nonnegativity assumption in \eqref{A0} and since the entropy of a probability measure is nonnegative, we have that $\cF(\rho)\geq 0$ on $\cP(\Omega)$.
A short computation reveals that the free energy dissipation is given by 
\begin{align}\label{e:FED}
  \pderiv{}{t} \cF(\rho) &= - \int_{\Omega} \rho \abs*{ \nabla \bra*{ \kappa \log \rho + W \ast \rho}}^2 \dx{x} =-\cD(\rho) .
\end{align}
It is natural to assume that the free energy is initially finite $\F(\rho_0)\in [0,\infty)$. 
Therewith, we can rewrite the aggregation-diffusion equation \eqref{e:agg-diff} as
\begin{equation}\label{e:agg-diff:GF}
  \partial_t \rho = \nabla\cdot \bra*{ \rho \bra*{ \kappa \nabla \log \rho + \nabla W \ast \rho}} = \nabla \cdot\bra*{ \rho \nabla \bra*{\frac{\delta\cF}{\delta \rho}(\rho)}} \:,
\end{equation}
where $\delta\cF/\delta\rho$ is the variational derivative (sometimes also called $L^2$ derivative) of $\cF$, and it becomes evident that~\eqref{e:agg-diff} is a gradient flow of the free energy $\cF$ with respect to the Wasserstein distance (cf.~\cite{JordanKinderlehrerOtto1998,CarrilloMcCannVillani2003,Villani2003}). In the thermodynamics interpretation, the term $\nabla(\delta\cF(\rho)/\delta\rho)$ is the (generalized) force driving the system towards states of lower free energy following the second law of thermodynamics.  Hence, in accordance with the principles of thermodynamics, the dissipation in \eqref{e:FED} can be expressed as the product of the force $\nabla(\delta\cF(\rho)/\delta\rho)$ and the flux $j=\kappa \nabla \rho + \rho \nabla W \ast \rho$ from~\eqref{e:agg-diff},
\begin{equation} \label{e:FED:force-flux}
 \cD(\rho) =  \int_{\Omega} \bra*{ \kappa\nabla\log \rho + \nabla W\ast \rho} \cdot \bra*{ \kappa \nabla\rho + \rho \nabla W\ast \rho}  \dx{x}.
  \end{equation}
  
The thermodynamical structure of the equation also provides a physical formulation of the stationary solutions to~\eqref{e:agg-diff}. On the one hand, it was already observed in~\cite{KirkwoodMonroe1941} that $\rho\in \cP(\Omega)$ is a stationary state if and only if $\rho$ is a fixed point of the Kirkwood--Monroe fixed point map $T:\cP_{\ac}(\Omega)\to \cP_{\ac}^+(\Omega)$ given by
\begin{equation}\label{e:KirkwoodMonroe}
  T(\rho)(x) = \frac{e^{-\kappa^{-1} W\ast \rho(x)}}{\int e^{-\kappa^{-1} W\ast \rho(y)}\dx{y}} ,
\end{equation}
where $\cP_{\ac}^{(+)}(\Omega)$ is the set of absolutely continuous positive probability measures.
The map $T$ allows to rewrite~\eqref{e:agg-diff} in symmetric state-dependent form
\begin{equation}\label{e:agg-diff:sym}
  \partial_t \rho = \kappa \nabla\cdot\bra[\Big]{ T(\rho) \nabla \frac{\rho}{T(\rho)}}.
\end{equation}
Hence, every stationary state has a Boltzmann statistical representation and it immediately follows that any equilibrium satisfies the detailed balance condition: $j=-\kappa T(\rho)\nabla \frac{\rho}{T(\rho)} = -\kappa \nabla \rho - \rho \nabla W \ast \rho=0$. The gradient flow formulation of~\eqref{e:agg-diff} on the other hand provides that stationary points are critical points of the energy $\cF(\rho)$ and they are non-dissipative, $\cD(\rho)=0$.

Finally, the free energy dissipation principle~\eqref{e:FED} is the starting point for studying the large time behavior of~\eqref{e:agg-diff}. After proving suitable lower-semicontinuity properties of $\cF$ and $\cD$, one can argue by the LaSalle invariance principle (see, e.g.,~\cite[Theorem 4.2 in Chapter IV]{Walker1980} and the recent preprint~\cite{CarrilloGvalaniWu2020}) that for $t\to \infty$ any solution converges to the set of stationary points of~\eqref{e:agg-diff}. This is proven for related models with specific interaction potentials in~\cite{Tamura1987,BCCP98,Tugaut2013,conlon2017}.

\subsection{Goals}

The main goal of this work is to provide a numerical scheme for a large class of aggregation-diffusion equations that preserve the free energy structure of the equation and is stable for all ranges of $\kappa$. In particular, the scheme should satisfy a discrete analog to the dissipation identity~\eqref{e:FED} and preserve the Boltzmann statistical form of stationary states in~\eqref{e:KirkwoodMonroe}.

For this purpose, we propose a (semi-)implicit finite volume scheme based on a Scharfetter--Gummel discretization of the flux. The particular form of the flux is physically meaningful as it can be derived from a suitable cell problem between neighboring control volumes (see~Section~\ref{sec:cell_problem}). Indeed, in~Proposition~\ref{thm:FED} below, we will recover the free energy dissipation principle for the Scharfetter--Gummel discretization in the product form of force times flux, cf.~\eqref{e:FED:force-flux}.

The free energy dissipation principle also leads our way towards a characterization of the stationary states: First, they are fixed points of a suitable discrete Kirkwood--Monroe map; second, they are critical points of the numerical free energy; and third, they are states with vanishing numerical dissipation. This observation is formulated in Theorem \ref{thm:stat} below. The findings underline our thesis that the proposed discretization is consistent with the thermodynamic structure of~\eqref{e:agg-diff}.

Finally, the numerical free energy dissipation identity gives rise to discrete regularity estimates (cf. Lemma~\ref{prop:grad_est}), which allow us to obtain compactness (cf.~Proposition~\ref{prop:compactness}) in order to conclude that the discretized solution converges to the weak solution of~\eqref{e:agg-diff} in Theorem~\ref{thm:longtime}. In addition, by exploiting the characterization of stationary states as critical points of $\cF$, we can also conclude that every stationary solution of the Scharfetter--Gummel scheme converges to a stationary solution of the continuous problem. 

The paper is organized as follows: In Section \ref{S.scheme}, we introduce the numerical Scharfetter--Gummel finite volume scheme approximating the aggregation-diffusion equation \eqref{1} and motivate the particular form of the flux by discussing the associated one-dimensional cell problem. We subsequently present and discuss our main results concerning well-posedness and convergence of the scheme, characterization of stationary states, and the large time behavior. The section concludes with a discussion of related work. The proofs are all contained in Section \ref{s:proofs}.

\section{The numerical scheme}\label{S.scheme}

This section will introduce the numerical scheme and derive some of its most elementary and substantial features. 

\subsection{Definition}\label{Ss.def_scheme}

We first introduce the general notation that is required to define a finite volume method. We start with the tesselation of the domain $\Omega$, which we assumed to be smoothly bounded earlier.   For technical reasons that we will briefly discuss later, it is convenient to choose a tesselation consisting of Voronoi cells. 
Hence, we let $\cT^h$ be a Voronoi tesselation covering $\Omega$ such that each element $K\in \cT^h$ is compact, has a non-trivial intersection with the physical domain, $K\cap \Omega\neq\emptyset$, and has maximal size $h$,
\begin{equation}\label{e:diam:h}
	\sup_{K\in \cT^h}\diam(K) \leq h \:.
\end{equation}
We set $\hat \Omega = \cup_K K$, which contains $\Omega$ by construction. The generator of each cell $K\in \cT^h$ will be denoted by $x_K$, and we set $d_{KL} = d(x_K,x_L)$, where $d$ is the Euclidean distance on $\R^d$.
If $K$ and $L$ are two neighboring cells, we write $L\sim K$, and we denote by $K\edge L$ the shared edge, $K\edge L = \bar K\cap \bar L$. We furthermore denote by $\tau_{KL}$ the transmission coefficients, given by 
$
\tau_{KL} = \frac{\abs{K\edge L}}{d_{KL}}$.
Here and in the following, the symbol $|\cdot|$ is used to denote an area, but we will also utilize it for volumes. Hence, $|K|$ is the volume of a cell $K$, and $|\partial K|$ is the area of its surface.

We finally discretize time. The time step size will be denoted by $\dt$ and we set $t^n = n\dt$ for any $n\in\N_0=\set{0,1,2,\dots}$.

With these preparations at hand, we are now in the position to introduce the finite volume approximations. 
To start with, we discretize the initial datum $\rho_0\in \cP(\Omega)$. 
By extending $\rho_0$ by zero to $\hat \Omega$, we may consider the averages $\rho_K^0 = \abs{K}^{-1}\int_K \rho_0\dx{x}$ on each cell $K\in \cT^h$, and set
$\rho^0_h = \{\rho_K^0\}_{K\in \cT^h}$. It is readily checked that the finite volume approximation $\rho_h^0$ of the initial configuration is a discrete probability distribution on $\cT^h$. The set of all probability distributions on $\cT^h$ will be denoted by $\cP(\cT^h)$, i.e.,
\[
\cP(\cT^h) = \set[\bigg]{ \set{\rho_K}_{K\in \cT^h}: \rho_K\geq 0 \ \forall K \in \cT^h \text{ and } \sum_{K} |K| \rho_K = 1}.
\]
The general iteration scheme that constitutes  a discrete evolution equation reads
\begin{equation}\label{e:time:flux}
	\abs{K} \frac{\rho^{n+1}_K - \rho^n_K}{\dt} + \sum_{L \sim K} F_{KL}^{n+1} = 0 ,
\end{equation}
where $F_{KL}^{n+1}$ is the numerical flux from cell $K$ to its neighbor $L$. Notice that there is no flux across the outer boundary $\partial\hat \Omega$ in accordance with \eqref{bc}. 
The Scharfetter--Gummel scheme approximates  the simultaneous flux due to diffusion \emph{and} advection across a common edge in terms of the Bernoulli function $B_\kappa:\R\to \R$ given for $\kappa>0$ by
\begin{equation}\label{e:def:B}
	B_\kappa(s) = \begin{cases}
		\frac{s}{e^{\frac{s}{\kappa}}-1} , & \text{ for } s\ne 0 , \\
		\kappa , & \text{ for } s= 0 . 
	\end{cases}
\end{equation}
This function   is convex, strictly decreasing, and satisfies
\[
B_\kappa(s) \geq (s)^- = \max\set{-s,0} \qquad\text{and}\qquad  \lim_{\kappa\to 0} B_\kappa(s) \to (s)^- \:,
\]
for any $s\in\R$. 
We propose the Scharfetter--Gummel numerical flux  approximation for the aggregation-diffusion equation \eqref{e:agg-diff} in the form 
\begin{equation} \label{e:def:F}
	F_{KL}^{n+1} = \tau_{KL} \bra*{ B_\kappa\bra*{d_{LK} q_{LK}^{n+1}} \rho^{n+1}_K - B_\kappa\bra*{d_{KL} q_{KL}^{n+1}} \rho^{n+1}_L} ,
\end{equation}
where $q_{KL}^{n+1}$ is a discretization of the aggregation convolution term,
\begin{equation} \label{e:def:q}
	q_{KL}^{n+1} = \sum_{J\in \cT} \abs{J}\  \frac{\rho_J^{n+1}+\rho_J^{n}}{2} \ \frac{W(x_K - x_J) - W(x_L - x_J)}{d_{KL}} \:. 
\end{equation}  
We will motivate this definition of the numerical flux briefly in  Subsection~\ref{sec:cell_problem}. The arithmetic mean occurring in time in~\eqref{e:def:q} is needed to have a numerical analogue of the free energy dissipation relation~\eqref{e:FED} for general interaction potentials $W$, see Theorem~\ref{thm:FED} below. The same choice was made in~\cite{BailoCarrilloHu2018,AlmeidaBubbaPerthamePouchol2019} to ensure energy dissipation for the upwind scheme. For further reference, we remark that both flux and convolution term are antisymmetric,
\begin{equation}\label{e:flux:antisymmetry}
	q_{KL}^{n+1} = - q_{LK}^{n+1} \qquad\text{and}\qquad F_{KL}^{n+1} = - F_{LK}^{n+1} .
\end{equation}

We conclude this subsection by stating three equivalent formulations of \eqref{e:def:F} and \eqref{e:def:q}, which will conveniently simplify later discussions and computations.  First, by introducing the unidirectional numerical fluxes from cell $K$ to $L$ denoted by $j_{KL}^{n+1}$, the flux can be written in divergence form,
\begin{equation}\label{e:def:uniflux}
	j_{KL}^{n+1} = B_\kappa\bra*{d_{LK} q_{LK}^{n+1}} \rho^{n+1}_K \qquad\text{and hence}\qquad F_{KL}^{n+1} = \tau_{KL} \bra*{ j_{KL}^{n+1} - j_{LK}^{n+1}} \:.
\end{equation}
Using the antisymmetry of the convolution term \eqref{e:def:q}, and the definition of the Bernoulli function \eqref{e:def:B}, we can furthermore write
\begin{equation}\label{e:flux:identity}
	\frac{F_{KL}^{n+1}}{\tau_{KL}} = j_{KL}^{n+1} - j_{LK}^{n+1} = d_{KL}q_{KL}^{n+1} \frac{\rho_K^{n+1} e^{\frac{d_{KL} q_{KL}^{n+1}}{2\kappa}}- \rho_L^{n+1} e^{-\frac{d_{KL} q_{KL}^{n+1}}{2\kappa}}}{ e^{\frac{d_{KL} q_{KL}^{n+1}}{2\kappa}} -e^{-\frac{d_{KL} q_{KL}^{n+1}}{2\kappa}}} \:.
\end{equation}
In fact, in our motivation for the particular form of the Scharfetter--Gummel flux, we will derive this identity rather than \eqref{e:def:F}. Finally, thanks to the elementary identity $B_{\kappa}(s) = \frac{s}2\bigl(\coth\bigl(\frac{s}{2\kappa}\bigr)-1\bigr)$, we may rewrite \eqref{e:def:F} as
\begin{equation}
	\label{e:flux:identity2}
	F_{KL}^{n+1} = |K\edge L| q_{KL}^{n+1} \frac{\rho_K^{n+1}+\rho_L^{n+1}}2 +\frac12 |K\edge L| q_{KL}^{n+1} \coth\Bigl(\frac{d_{KL} q_{KL}^{n+1}}{2\kappa}\Bigr)\left(\rho_K^{n+1}-\rho_L^{n+1}\right).
\end{equation}
This formulation of the numerical flux is beneficial, as it (roughly) separates the aggregation term from the diffusion term.

\subsection{Cell problem}\label{sec:cell_problem}

We briefly give some background information about the flux relation~\eqref{e:flux:identity} which is obtained from the solution of the following one-dimensional cell problem.

We consider two neighboring cells $K$ and $L$ with respective masses $\rho_K$ and $ \rho_L$ and intercellular advective flux $q_{KL}$.
For given $\rho_K, \rho_L$ and $q_{KL}$, the  normalized (per unit interfacial area) net flux $f_{KL}$ from cell $K$ to its neighbor $L$ is obtained as the solution of the boundary value problem
\begin{equation}\label{e:cellP_SG}
	\begin{aligned}
		f_{KL} &= -\kappa \partial_{x} \rho(\cdot) + q_{KL} \rho(\cdot) \quad \text{ on } (0, d_{KL}),\\
		\rho(0)&= \rho_K \qquad\text{and}\qquad  \rho(d_{KL})= \rho_L .
\end{aligned}\end{equation}
Hence, besides $f_{KL}\in \R$, the function $\rho:[0,1]\to \R$ is part of the unknown in~\eqref{e:cellP_SG}. It is readily checked that the Scharfetter--Gummel flux~\eqref{e:flux:identity} is computed as the solution to this two-point boundary problem  via the relation $ F_{KL} = \tau_{KL}f_{KL}$. This construction is the main reason why the Scharfetter--Gummel scheme preserves many of the structural properties of the continuous  equation~\eqref{e:agg-diff}.

It is straightforward to generalize this approach to aggregation-diffusion equations with diffusion operators having nonlinear mobilities, which is, for instance, the case for chemotaxis models avoiding overcrowding effects~\cite{BurgerDiFrancescoDolakStruss2006}. Another possible generalization is towards free energies of non-Boltzmann type like it is the case for the porous medium equation~\cite{Otto2001}. Although the resulting flux cannot be expressed as a simple closed function of $\rho_K, \rho_L$, and $v$, it still has many physical properties of the continuum equation. These generalizations are studied in~\cite{EymardFuhrmannGaertner06} for nonlinear diffusions with linear drifts. Similar constructions for generalized Scharfetter--Gummel schemes are known from the literature on numerical methods for semiconductors~\cite[Section 4.2]{Farrell_etal2017}. Moreover, various extensions of the Scharfetter-Gummel scheme for a problem with nonlocal (repulsive) interaction are discussed in~\cite{Cances_etal2019}. Further \emph{modified Scharfetter--Gummel} schemes are introduced in~\cite{BessemoulinChatard12} in the context of nonlinear diffusion equations with linear drifts to avoid working with non-explicit functions for the flux term. The generalization of these approaches to interaction equations with nonlinear diffusions remains to be investigated.

It might be insightful to study the solution $f_{KL}$ of the cell problem \eqref{e:cellP_SG} in terms of its parameters $\rho_K$, $\rho_L$, $d_{KL}$, $q_{KL}$, and $\kappa$. For this purpose, we introduce the solution function 
$\theta_\kappa:\R_+ \times \R_+ \times \R \to \R$ given by 
\begin{equation}\label{e:def:Theta}
	\theta_\kappa(a,b;v) = \begin{cases}
		\displaystyle v \, \frac{a e^{\frac{v}{2\kappa}} - b e^{-\frac{v}{2\kappa}}}{e^{\frac{v}{2\kappa}}- e^{-\frac{v}{2\kappa}}} , & \text{ for } v\ne 0 ,\\
		\kappa (a-b) , & \text{ for } v = 0,
	\end{cases}
\end{equation}
and write
\begin{equation}\label{e:flux:theta}
	f_{KL} = \theta_\kappa\bra*{\rho_K, \rho_L; d_{KL} q_{KL}}.
\end{equation}
To conclude this subsection, we gather some properties of this function that relate the Scharfetter--Gummel flux to physically known quantities and relations on the one hand and the numerical upwind scheme on the other hand.
\begin{remark}\label{lem:theta}
 For any $\kappa,a,b>0$, the following holds:
 \begin{enumerate}[ (i) ]
  \item The function $\theta_{\kappa}$ is \emph{one-homogeneous} in the first two variables, i.e.,  for any $\lambda>0$ and $v\in \R$, it holds that $\theta_\kappa(\lambda a, \lambda b; v) = \lambda \theta_\kappa(a,b;v)$.
  \item For small velocities, one recovers \emph{Fick's law} and the next order, i.e., 
  \begin{equation}\label{e:theta:small_v}
   \theta_\kappa(a,b;v)= \kappa(a-b) + \frac{a+b}{2} v + O(|v|^2)  \qquad\text{as } |v|\to 0 . 
  \end{equation}
  \item\label{rem3} The \emph{no-flux velocity} $v$ satisfying $\theta_\kappa(a,b;v)=0$ is characterized by  $v = -\kappa \log\frac{a}{b}$.
  \item It holds that $\R \ni v \mapsto \theta_\kappa(a,b;v)$ is one-to-one, since 
  \begin{equation}
\label{2}
  \min\set{a,b} \leq \partial_{v} \theta_\kappa(a,b;v) \leq \max\set{a,b}.
\end{equation}
  \item In the zero-diffusivity limit, $\theta_\kappa$ reduces to the \emph{upwind flux}, 
  \begin{equation}\label{e:theta:upwind}
  \lim_{\kappa \to 0} \theta_\kappa(a,b;v) = 
  \begin{cases}
    a v &, \text{ for } v>0, \\
    b v &, \text{ for } v<0 ,\\
    0 &, \text{ for } v = 0 .
  \end{cases} 
\end{equation}
 \end{enumerate}
\end{remark}

\subsection{Main results}
For our analysis, we have to ensure that the scheme is not degenerating in the sense that cells have the uniform isoperimetric property
\begin{equation}
\label{3}
\frac{|\partial K|}{|K|} \le \frac{C_{\iso}}{h}.
\end{equation} 
In order to show stability of the scheme in the sense of \eqref{e:stable}, we furthermore have to impose a smallness condition on both the mesh size and the time step size. A possible choice is
\begin{equation}\label{smallness_condition}
h \le \frac{\kappa}{\Lip(W)},\quad 64 C_{\iso} 
\frac{\Lip(W)^2 \cF^h(\rho^0)}{\kappa^2} \dt +512 C_{\iso}^2 
 \frac{\Lip(W)^4}{\kappa^2}(\dt)^2 \le 1,\quad \dt \frac{\Lip(W)^2}{\kappa} \le \frac{3}{8}. 
\end{equation}
Apparently, the conditions on $h$ and $\dt$ get more restrictive for small diffusivities  and irregular interaction potentials. However, we believe that in practice, any of these conditions are automatically satisfied. Moreover, we do not claim that any of the conditions in~\eqref{smallness_condition} is sharp. For instance, the precise assumption on the mesh size is chosen only to simplify the analysis.
\begin{theorem}[Well-posedness]\label{thm:well-posed}
(i) Under the assumption \eqref{3}, the Scharfetter--Gummel scheme \eqref{e:time:flux}, \eqref{e:def:F}, \eqref{e:def:q} with initial condition in $\cP(\cT^h)$ has at least one solution. Any solution is mass preserving and after the first time step strictly positive.
(ii) If $h$ and $\dt$ are in addition small in the sence of \eqref{smallness_condition}, the scheme is stable in $\ell^1(\cT^h)$. More precisely, there exists a constant $C$ such that for  any two probability distributions $\rho^0$ and $\tilde\rho^0 $ it holds that 
\begin{equation}\label{e:stable}
  \norm{\rho^n - \tilde\rho^n}_{\ell^1(\cT^h)} \leq C^n \norm{\rho^0 - \tilde\rho^0}_{\ell^1(\cT^h)} \:
\end{equation}
for any $n\in \N$. In particular, the solution is unique.
\end{theorem}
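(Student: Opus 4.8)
The plan is to handle the two assertions with different tools: a topological fixed-point argument together with the monotonicity of the scheme for the qualitative statements in (i), and a one-step $\ell^1$-estimate for the quantitative stability in (ii).

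For part (i), I would first read off \emph{mass conservation} directly from the antisymmetry \eqref{e:flux:antisymmetry}: summing \eqref{e:time:flux} over all cells $K$ makes the flux contribution $\sum_K\sum_{L\sim K}F_{KL}^{n+1}$ cancel in pairs, so $\sum_K|K|\rho^{n+1}_K=\sum_K|K|\rho^n_K$ and the total mass $1$ is preserved inductively. For \emph{existence}, I would set up a fixed-point map on the compact convex simplex $\cP(\cT^h)$: given a trial state $\sigma\in\cP(\cT^h)$, freeze the convolution velocities $q^{n+1}_{KL}[\sigma,\rho^n]$ in \eqref{e:def:q} and solve the resulting \emph{linear} system \eqref{e:time:flux}, \eqref{e:def:F} for the next iterate. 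Since the Bernoulli function is strictly positive, the associated matrix has positive diagonal, nonpositive off-diagonal entries, and—by the same antisymmetry computation—column sums equal to $|K|>0$; it is therefore a strictly column diagonally dominant M-matrix, hence invertible with nonnegative inverse, and maps the weighted simplex into itself. The induced map $\sigma\mapsto\rho^{n+1}$ is continuous (the entries depend continuously on $\sigma$ through $q$ and $B_\kappa$, and matrix inversion is continuous on invertible M-matrices), so Brouwer's fixed-point theorem yields a fixed point, which is exactly a solution of the full nonlinear scheme. Here the nondegeneracy \eqref{3} enters to keep the transmission coefficients $\tau_{KL}$, and thus the matrix entries, under uniform geometric control.

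\emph{Strict positivity} then follows from the same M-matrix: since the adjacency graph of the tessellation of the connected domain $\Omega$ is connected, the matrix is irreducible, so its nonnegative inverse is in fact entrywise strictly positive. As the right-hand side $(|K|\rho^0_K)_K$ is nonnegative and nonzero, the first iterate $\rho^1$ is strictly positive, and inductively so is every $\rho^n$ with $n\ge1$.

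For part (ii), I would prove a one-step estimate $\norm{e^{n+1}}_{\ell^1}\le C\norm{e^n}_{\ell^1}$ for $e^n=\rho^n-\tilde\rho^n$ and then iterate. Subtracting the two schemes, testing with $\sgn(e^{n+1}_K)$, summing over $K$ and reorganizing into an edge sum via antisymmetry, the discrete time derivative contributes $\ge\norm{e^{n+1}}_{\ell^1}-\norm{e^n}_{\ell^1}$. The flux difference I would split along the lines of \eqref{e:flux:identity2} into a \emph{diffusive} part and an \emph{aggregation} part. The diffusive part is monotone in the densities (both Bernoulli coefficients positive), so by a discrete Kato inequality it produces a \emph{good}, dissipative contribution; crucially, the mesh condition $h\le\kappa/\Lip(W)$ guarantees $|d_{KL}q^{n+1}_{KL}|\le\kappa$, keeping these coefficients in the nondegenerate regime $\gtrsim\kappa$ so that the dissipation is genuinely coercive.

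The heart of the argument—and the step I expect to be the main obstacle—is controlling the \emph{aggregation} part, which carries the nonlocality through the velocity difference $q^{n+1}_{KL}-\tilde q^{n+1}_{KL}$. Using the Lipschitz continuity of $W$ and of $B_\kappa$, this difference obeys the mesh-independent bound $|q^{n+1}_{KL}-\tilde q^{n+1}_{KL}|\le\tfrac12\Lip(W)\bigl(\norm{e^{n+1}}_{\ell^1}+\norm{e^n}_{\ell^1}\bigr)$, but the naive estimate of its contribution produces a factor $\sum_K\tilde\rho^{n+1}_K|\partial K|$, which scales like $h^{-1}$ and would force an unwanted CFL coupling. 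Avoiding this is exactly where the a priori \emph{free-energy bound} $\cF^h(\rho^n)\le\cF^h(\rho^0)$ (Proposition~\ref{thm:FED}) and the discrete regularity it yields (Lemma~\ref{prop:grad_est}) must be invoked, so that the aggregation term is dominated—after a Young inequality—by the diffusive dissipation from the previous step. The smallness conditions \eqref{smallness_condition} are precisely calibrated so that this absorption succeeds with a constant strictly below one, leaving $\norm{e^{n+1}}_{\ell^1}\le(1+C\dt)\norm{e^n}_{\ell^1}$ with $C$ depending only on $\kappa$, $\Lip(W)$ and $\cF^h(\rho^0)$. Iterating over $n$ gives \eqref{e:stable}, and the choice $\tilde\rho^0=\rho^0$ forces $e^n\equiv0$, which yields uniqueness.
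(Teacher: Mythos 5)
Your proposal is correct in substance, but part (i) takes a genuinely different route from the paper, so a comparison is in order. For existence, the paper runs a topological degree argument in the homotopy parameter $\mu$, using only the a priori bounds from mass conservation and nonnegativity to keep solutions away from $\partial M$; you instead linearize by freezing $q^{n+1}_{KL}[\sigma,\rho^n]$ and apply Brouwer on $\cP(\cT^h)$, observing that the frozen-coefficient matrix is an irreducible M-matrix with column sums exactly $|L|$ (this is indeed the matrix form of the antisymmetry cancellation, so your claim checks out). Your route buys more in one stroke: the entrywise positivity of the inverse of the irreducible M-matrix gives nonnegativity, invariance of the simplex, \emph{and} strict positivity after one step, whereas the paper proves positivity separately (Lemma~\ref{L2}) by passing to the transformed variable $h_K=\rho_Ke^{V_K/\kappa}$, which symmetrizes the scheme via the logarithmic mean and then runs a discrete maximum principle plus a connectivity iteration --- the same connectivity hypothesis your irreducibility argument uses. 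The paper's transformation has the side benefit of exhibiting the scheme as a symmetric elliptic problem, which is reused in the proof of Theorem~\ref{thm:stat}, but as a proof of Theorem~\ref{thm:well-posed}(i) your version is more self-contained. For part (ii) your strategy coincides with the paper's: a one-step $\ell^1$ estimate in which the ``good'' diffusive contributions carry a sign (the paper writes $\theta_\kappa(a,b,v)=\kappa\sigma(\tfrac{v}{2\kappa})a+\kappa\sigma(-\tfrac{v}{2\kappa})b$ with $\sigma\ge0$ and lets these terms cancel after relabelling, which is the same mechanism as your Kato-type sign argument), the Lipschitz bound $|w_{KL}-\tilde w_{KL}|\lesssim d_{KL}\Lip(W)(\norm{e^{n+1}}_{\ell^1}+\norm{e^n}_{\ell^1})/\kappa$, and --- exactly as you identified --- the free-energy-derived gradient bound \eqref{fine_gradient_estimate} to defuse the $h^{-1}$ factor from $\sum_K\sum_{L\sim K}\tau_{KL}|\tilde\rho_K^{n+1}-\tilde\rho_L^{n+1}|$. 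Two small corrections: the paper does not absorb the aggregation error into the diffusive dissipation via Young; it absorbs directly into $\tfrac18(\norm{e^{n+1}}_{\ell^1}+\norm{e^n}_{\ell^1})$ using \eqref{smallness_condition}, and no coercivity of the diffusive part is needed (nonnegativity suffices). Also, your claimed one-step constant $1+C\dt$ is too optimistic: the term $I_1$ scales like $\sqrt{\dt}\,\norm{e}_{\ell^1}$ because \eqref{fine_gradient_estimate} only controls the \emph{square} of the gradient sum by $C/\dt$, and the paper settles for the per-step constant $C=3$, which is all that \eqref{e:stable} requires.
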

In analogy to the continuous setting~\eqref{e:free_energy}, we define the numerical free energy, which we split into its entropic part and interaction energy given by
\begin{align}
  \cF^h(\rho) &= \kappa \cS^h(\rho) + \cE^h(\rho), \label{e:free_energy:numerical}\\
  \text{with}\qquad \cS^h(\rho)&= \sum_K \abs{K} \rho_K \log \rho_K \label{e:entropy:numerical} \\
  \text{and}\qquad \cE^h(\rho) &=\frac{1}{2}  \sum_{K,L} \abs{K} \abs{L} W(x_K- x_L) \rho_K \rho_L \label{e:energy:numerical}\:.
\end{align}
We establish a discrete version of the free energy dissipation relation~\eqref{e:FED}.
Besides the free energy functional, we need to introduce the relative entropy between $\rho,\tilde\rho\in \cP(\cT^h)$ given by 
\begin{equation}\label{e:def:RelEnt}
 \cH(\rho\mid\tilde \rho) = \sum_{K} \abs{K} \rho_K \log \frac{\rho_K}{\tilde \rho_K} ,
\end{equation}
which will occur as an additional dissipation in time due to the implicit time discretization. 
\begin{proposition}[Free energy dissipation and large time behavior]\label{thm:FED}
  Let $\set{\rho_K^{n}}_{K,n}$ be a solution to the Scharfetter--Gummel scheme~\eqref{e:time:flux}, \eqref{e:def:F}, \eqref{e:def:q}. Then, for any $n\in\N_0$ it holds that
  \begin{equation}\label{e:FED:numerical}
   \frac{\cF^h(\rho^{n+1})-\cF^h(\rho^n)}{\dt} + \kappa \frac{\cH(\rho^n \mid \rho^{n+1})}{\dt}  = - \cD^h(\rho^{n+1})\:,
  \end{equation}  
  where $\cD^h$ is  the dissipation functional given by
  \begin{equation}\label{e:def:dissipation}
   \cD^h(\rho^{n+1}) = \sum_{K} \sum_{L\sim K} \frac{\kappa \abs{K\edge L}}{d_{KL}} \; \alpha_\kappa\bra*{\rho_K^{n+1},\rho_L^{n+1}, d_{KL} q_{KL}^{n+1}} \geq 0 ,
  \end{equation}
  where $\alpha_\kappa: \R_+ \times \R_+ \times \R \to \R_+$ is given in terms of $\theta_\kappa$ from~\eqref{e:def:Theta} by
\begin{equation}\label{e:def:alpha}
\alpha_{\kappa}(a,b;v) = \bra*{ \log \bra[\big]{a e^{\frac{v}{2\kappa}}} - \log\bra[\big]{b e^{-\frac{v}{2\kappa}}}} \, \theta_{\kappa}(a,b;v).
\end{equation}
\end{proposition}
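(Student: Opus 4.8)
The plan is to compute the one-step free-energy difference $\cF^h(\rho^{n+1}) - \cF^h(\rho^n)$ explicitly and to reorganize it as a discrete force-times-flux sum over the mesh edges, in analogy with the continuous identity \eqref{e:FED:force-flux}. I treat the entropic and interaction parts separately. For the entropy, the key tool is the exact convexity (Bregman) identity for $\phi(s)=s\log s$,
\[
  \phi(b)-\phi(a) = \phi'(b)\,(b-a) - \bra*{a\log\tfrac ab - a + b},
\]
applied cellwise with $a=\rho_K^n$ and $b=\rho_K^{n+1}$. Summing against $\abs{K}$ and using that $\rho^n,\rho^{n+1}\in\cP(\cT^h)$ both have unit mass (so the affine terms vanish and the constant $1$ in $\phi'(b)=\log b+1$ drops), the remainder assembles exactly into the relative entropy, yielding
\[
  \kappa\bra*{\cS^h(\rho^{n+1})-\cS^h(\rho^n)} + \kappa\,\cH(\rho^n\mid\rho^{n+1}) = \kappa\sum_K \abs{K}\,\log\rho_K^{n+1}\,\bra*{\rho_K^{n+1}-\rho_K^n}.
\]
This is where the implicit-in-time discretization shows up: it produces the extra nonnegative relative-entropy term on the left of \eqref{e:FED:numerical} as an identity rather than merely an inequality.

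For the interaction energy I expand $\rho_K^{n+1}\rho_L^{n+1}-\rho_K^n\rho_L^n$ and symmetrize in $K\leftrightarrow L$ using the symmetry $W(x_K-x_L)=W(x_L-x_K)$ from \eqref{A0}. Introducing the discrete interaction potential $\Phi_K = \sum_J \abs{J}\,W(x_K-x_J)\,\tfrac{\rho_J^{n+1}+\rho_J^n}{2}$, this gives $\cE^h(\rho^{n+1})-\cE^h(\rho^n) = \sum_K \abs{K}\bra*{\rho_K^{n+1}-\rho_K^n}\Phi_K$, and by the very definition \eqref{e:def:q} the potential differences reproduce the advective term, $\Phi_K-\Phi_L = d_{KL}\,q_{KL}^{n+1}$. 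Combining entropy and energy and setting the discrete chemical potential $\mu_K = \kappa\log\rho_K^{n+1}+\Phi_K$ (the discrete analogue of $\delta\cF/\delta\rho$) gives the compact identity
\[
  \cF^h(\rho^{n+1})-\cF^h(\rho^n) + \kappa\,\cH(\rho^n\mid\rho^{n+1}) = \sum_K \abs{K}\bra*{\rho_K^{n+1}-\rho_K^n}\,\mu_K .
\]

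I then insert the scheme \eqref{e:time:flux}, so that $\abs{K}(\rho_K^{n+1}-\rho_K^n) = -\dt\sum_{L\sim K}F_{KL}^{n+1}$, and perform a discrete integration by parts: using the antisymmetry $F_{KL}^{n+1}=-F_{LK}^{n+1}$ from \eqref{e:flux:antisymmetry}, the right-hand side becomes $-\dt$ times a sum over edges of the products $\bra*{\mu_K-\mu_L}F_{KL}^{n+1}$. The crucial algebraic step is that each such product reproduces the density $\alpha_\kappa$. Indeed, writing $v = d_{KL}q_{KL}^{n+1}$, the force difference is
\[
  \mu_K-\mu_L = \kappa\log\tfrac{\rho_K^{n+1}}{\rho_L^{n+1}} + v = \kappa\bra[\big]{\log\bra[\big]{\rho_K^{n+1}e^{\frac{v}{2\kappa}}} - \log\bra[\big]{\rho_L^{n+1}e^{-\frac{v}{2\kappa}}}},
\]
while the flux representation \eqref{e:flux:identity}--\eqref{e:flux:theta} gives $F_{KL}^{n+1}=\tau_{KL}\,\theta_\kappa(\rho_K^{n+1},\rho_L^{n+1};v)$ with $\tau_{KL}=\abs{K\edge L}/d_{KL}$. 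Multiplying these and comparing with \eqref{e:def:alpha} shows that each edge contribution equals $\tfrac{\kappa\abs{K\edge L}}{d_{KL}}\,\alpha_\kappa(\rho_K^{n+1},\rho_L^{n+1};v)$, so summing over all edges reassembles exactly $\cD^h(\rho^{n+1})$ and proves \eqref{e:FED:numerical}.

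It remains to justify $\cD^h\ge 0$, for which it suffices to show $\alpha_\kappa(a,b;v)\ge 0$ pointwise. Setting $A=a e^{\frac{v}{2\kappa}}$ and $B=b e^{-\frac{v}{2\kappa}}$, one has $\theta_\kappa(a,b;v)=\tfrac{v}{2\sinh(v/2\kappa)}\,(A-B)$ with $\tfrac{v}{2\sinh(v/2\kappa)}>0$, whence $\alpha_\kappa=\tfrac{v}{2\sinh(v/2\kappa)}\,(\log A-\log B)(A-B)\ge 0$ by monotonicity of the logarithm. The main obstacle here is not conceptual but bookkeeping: one must verify that the logarithmic force difference and the Scharfetter--Gummel flux recombine into precisely $\alpha_\kappa$, which works only because the exponential weights $e^{\pm v/2\kappa}$ built into \eqref{e:flux:identity} are exactly the ones that symmetrize $\mu_K-\mu_L$; keeping the ordered-versus-unordered edge convention consistent between the integration by parts and the double sum in \eqref{e:def:dissipation} is the point at which factors of two must be tracked with care.
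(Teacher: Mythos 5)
Your proof is correct and follows essentially the same route as the paper's: the exact entropy identity producing the relative-entropy remainder, the discrete product rule plus the symmetry of $W$ for the interaction energy, insertion of the scheme and summation by parts via the antisymmetry of $F_{KL}^{n+1}$, and identification of each edge term with $\alpha_\kappa$ through \eqref{e:flux:identity}; packaging the entropy step as a Bregman identity and the forces as a discrete chemical potential $\mu_K$ is only a cosmetic reorganization of the paper's computation. The one hypothesis you should state explicitly is the strict positivity $\rho_K^{n+1}>0$ from Lemma~\ref{L2}, which makes $\log \rho_K^{n+1}$ and $\cH(\rho^n\mid\rho^{n+1})$ well-defined.
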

We remark that due to the nonnegativity of  dissipation functional and relative entropy, the free energy functional is decreasing during the evolution,
\[
\cF^h(\rho^n) \le \cF^h(\rho^0),
\]
for any $n\in\N$. 
\begin{remark}\label{rem:SG:GF}
In comparison to~\eqref{e:FED}, we have the additional term $\cH(\rho^n \mid \rho^{n+1})$ providing a weak BV control on the discrete time gradient (cf.~Lemma~\ref{prop:grad_est}), which is typical for implicit schemes~\cite{SchlichtingSeis18}. 
To make the connection to~\eqref{e:FED} more apparent, we expand $\cD^h$ using~\eqref{e:theta:small_v} 
\begin{align*}
 \cD^h(\rho) &=\! \sum_{K} \sum_{L\sim K} \frac{|K\edge L|}{d_{KL}} \bra[\big]{ \kappa \bra*{\log \rho_K - \log \rho_L} + d_{KL} q_{KL}}\, \theta_\kappa(a,b;d_{KL} q_{KL}) \\
  &=\! \sum_{K} \sum_{L\sim K} d_{KL} |K\edge L| \bra[\big]{ \kappa \nabla_{KL}\log \rho + q_{LK}}\bra*{ \!\kappa \nabla_{KL}\rho + \frac{\rho_K + \rho_L}{2} q_{LK} + O(d_{KL} |q_{KL}|^2)\!} ,
\end{align*}
where we write $\nabla_{KL} \rho = \frac{\rho_L - \rho_K}{d_{KL}}$ for the discrete gradient. By recalling the form of $q$ as convolution with $W$ in~\eqref{e:def:q}, we 
expect in the continuum limit that~\eqref{e:FED:numerical} becomes~\eqref{e:FED}.

The validity of the numerical free energy dissipation principle raises the question of whether the stability estimate~\eqref{e:stable} can be improved to one which does not degenerate in the limit $h,\dt\to 0$. The classical entropy method~\cite{Yau1991} entails that two solutions of the limit equation~\eqref{e:agg-diff} are exponentially stable in relative entropy and hence also in $L^1$. Proposition~\ref{thm:FED} shows that Scharfetter--Gummel discretization is structure-preserving, and we conjecture that the entropy method is also applicable in this case.

Finally, we note that the Scharfetter--Gummel flux in~\eqref{e:def:Theta} is very close to a gradient flow formulation for the free energy~\eqref{e:free_energy:numerical}. For observing this, we note that the flux already encodes the discrete gradient of the logarithm. Hence, the \emph{Onsager relation}, which translates generalized forces like the derivative of the free energy to the flux $f_{KL}$, is given  in terms of the function
\begin{equation}\label{e:phi_kappa}
	\phi_\kappa(a,b;\xi) = \theta_\kappa\bra*{a,b;\xi - \kappa\log\frac{a}{b}} = 2 \kappa\sinh\bra*{\frac{\xi}{2\kappa}} \frac{\log\frac{e^{\frac{\xi}{2\kappa}}}{a}-\log\frac{e^{-\frac{\xi}{2\kappa}}}{b}}{\frac{e^{\frac{\xi}{2\kappa}}}{a}-\frac{e^{-\frac{\xi}{2\kappa}}}{b}}.
\end{equation}
The property~\eqref{2} ensures, that the mapping $\xi \mapsto \phi_\kappa(a,b;\xi)$ is indeed one-to-one. In this way, up to the semi-implicit term $\frac{\rho^{n+1}+\rho^{n}}{2}$ in the convolution, the scheme becomes close to a generalized gradient structure in the spirit of~\cite{LieroMielkePeletierRenger2017,peletier2020jump}
\begin{equation}\label{e:SG:GF}
	|K| \frac{\rho_K^{n+1}-\rho_K^n}{\delta t} = \sum_{L\sim K} \tau_{KL} \phi_\kappa\bra*{\rho_K, \rho_L; - d_{KL} \nabla_{KL} \frac{\delta \mathcal{F}^h}{\delta \rho}(\rho^n)} .
\end{equation}
In particular the upwind scheme from~\cite{BailoCarrilloHu2018} is obtained by replacing $\phi_\kappa$ in~\eqref{e:SG:GF} with its limit for $\kappa\to 0$ given by $\phi_0(a,b;\xi) = a\, \xi_+ - b \, \xi_-$. 
\end{remark}
The stationary solutions to~\eqref{e:time:flux} are densities $\rho\in \cP(\cT^h)$ such that for all $K\in \cT^h$ it holds
\begin{equation}\label{e:stat:def}
0 = \sum_{L\sim K}   F_{KL}[\rho]=  \sum_{L\sim K} \tau_{KL}\bra*{ j_{KL}[\rho] - j_{LK}[\rho]} = \sum_{L\sim K} \tau_{KL} \theta_\kappa\bra*{ \rho_K,\rho_L ; d_{KL} q_{KL}[\rho]} , 
\end{equation}
where we used the identities~\eqref{e:def:uniflux} and~\eqref{e:flux:theta} with $\rho^{n+1}=\rho^{n}=\rho$. 
The stationary states have the following characterization, which is completely analogous to the situation for the continuous aggregation-diffusion equation studied in~\cite[Proposition 2.4]{CarrilloGvalaniPavliotisSchlichting2019}.
\begin{theorem}[Characterization of stationary states]\label{thm:stat}
Let $h>0$ and  $\rho\in \cP(\cT^h)$ be given. The following statements are equivalent:
\begin{enumerate}[ (i) ]
	\item\label{thm:stat:1} $\rho$ is a stationary state satisfying~\eqref{e:stat:def};
	\item\label{thm:stat:2} $\rho$ solves the Kirkwood--Monroe fixed point equation
	\begin{equation}\label{e:stat:fixed_point}
		\rho_K  = \frac{\exp\bra*{-\kappa^{-1} \sum_{J} \abs{J} \rho_J W(x_K-x_J)}}{Z^h(\rho)} , 
	\end{equation}
	for any $K\in \cT^h$, where $Z^h(\rho)= \sum_{\tilde K} \abs{\tilde K} \exp\bra*{-\kappa^{-1} \sum_{J} \abs{J} \rho_J W(x_{\tilde K}-x_J)}$;
	\item\label{thm:stat:3} $\rho$ is a critical point of the free energy functional $\cF^h$ on $\cP(\cT^h)$;
	\item\label{thm:stat:4} $\rho$ is without dissipation, i.e., $\cD^h(\rho)=0$.
\end{enumerate}
\end{theorem}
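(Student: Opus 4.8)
The plan is to show that each of the four conditions is equivalent to the single statement that the discrete chemical potential
\[
\mu_K := \kappa \log \rho_K + \sum_{J} \abs{J}\rho_J\, W(x_K-x_J)
\]
is independent of $K\in\cT^h$, using this as a hub to which I connect every item. Throughout I work with strictly positive $\rho$: under \eqref{thm:stat:2} positivity is immediate since the right-hand side of \eqref{e:stat:fixed_point} is a ratio of positive exponentials; under \eqref{thm:stat:3} it follows because the entropic derivative $\kappa(\log\rho_K+1)$ tends to $-\infty$ as $\rho_K\to0^+$, so no boundary point of the simplex $\cP(\cT^h)$ can be critical; under \eqref{thm:stat:4} it follows because an edge joining a vanishing and a positive density contributes $+\infty$ to $\cD^h$; and under \eqref{thm:stat:1} it is exactly the strict positivity after one time step from Theorem~\ref{thm:well-posed}, applied to the time-constant solution $\rho^n\equiv\rho$. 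The only mesh hypothesis I invoke beyond this is that the cell-adjacency graph of $\cT^h$ is connected, which holds since $\Omega$ is a domain. A preliminary observation, used repeatedly, is that by \eqref{e:def:q} with $\rho^{n+1}=\rho^n=\rho$ one has $d_{KL}q_{KL}[\rho] = \sum_J\abs{J}\rho_J\bra*{W(x_K-x_J)-W(x_L-x_J)}$, so that $\mu_K-\mu_L = \kappa\log(\rho_K/\rho_L)+d_{KL}q_{KL}[\rho]$.

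Next I would treat the three hub equivalences. For \eqref{thm:stat:2}, taking logarithms in \eqref{e:stat:fixed_point} gives $\mu_K = -\kappa\log Z^h(\rho)$, a $K$-independent constant, and conversely $\mu_K\equiv\mu$ exponentiates to \eqref{e:stat:fixed_point} with the normalization fixed by $\sum_K\abs{K}\rho_K=1$. For \eqref{thm:stat:3}, a Lagrange-multiplier computation for $\cF^h$ under this constraint, using the symmetry $W(x)=W(-x)$ to differentiate $\cE^h$, yields $\partial_{\rho_K}\cF^h = \abs{K}\bra*{\mu_K+\kappa}$, so criticality is precisely $\mu_K\equiv\const$. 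For \eqref{thm:stat:4}, the decisive algebraic fact is the factorization $\alpha_\kappa(a,b;v)=\bra[\big]{\log\tfrac ab+\tfrac v\kappa}\theta_\kappa(a,b;v)$ read off from \eqref{e:def:alpha}: by the no-flux velocity characterization in Remark~\ref{lem:theta} the two factors vanish simultaneously, at $v=-\kappa\log(a/b)$, and by the monotonicity \eqref{2} they share the same sign, so $\alpha_\kappa\ge0$ with equality iff $\theta_\kappa=0$. Hence $\cD^h(\rho)=0$ iff every summand in \eqref{e:def:dissipation} vanishes iff $d_{KL}q_{KL}[\rho]=-\kappa\log(\rho_K/\rho_L)$ for all neighbors iff $\mu_K=\mu_L$ for all $K\sim L$, which by connectedness of the mesh means $\mu_K\equiv\const$.

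It remains to link \eqref{thm:stat:1} to the hub. The implication \eqref{thm:stat:4}$\Rightarrow$\eqref{thm:stat:1} is immediate: if $\cD^h(\rho)=0$ then, as just noted, every edge flux $\theta_\kappa(\rho_K,\rho_L;d_{KL}q_{KL})$ vanishes (discrete detailed balance), so a fortiori each net balance $\sum_{L\sim K}F_{KL}[\rho]=0$ in \eqref{e:stat:def} holds. The converse \eqref{thm:stat:1}$\Rightarrow$\eqref{thm:stat:4}, passing from net flux balance to the vanishing of \emph{every} edge flux, is the one genuinely nontrivial step, and I expect it to be the main obstacle. The clean resolution is to read a stationary state as the time-constant solution $\rho^n\equiv\rho$ and apply the free-energy dissipation identity \eqref{e:FED:numerical}: since $\cF^h(\rho^{n+1})=\cF^h(\rho^n)$ and $\cH(\rho\mid\rho)=0$, the left-hand side is zero, forcing $\cD^h(\rho)=0$. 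Equivalently, and without invoking Proposition~\ref{thm:FED}, one multiplies \eqref{e:stat:def} by $\mu_K$ and sums over $K$; using $F_{KL}=-F_{LK}$, discrete summation by parts gives $0=\sum_K\mu_K\sum_{L\sim K}F_{KL}=\tfrac12\sum_K\sum_{L\sim K}(\mu_K-\mu_L)F_{KL}$, and the factorization identity $(\mu_K-\mu_L)F_{KL}=\kappa\tau_{KL}\,\alpha_\kappa(\rho_K,\rho_L;d_{KL}q_{KL})$ turns the right-hand side into $\tfrac12\cD^h(\rho)$, again forcing $\cD^h(\rho)=0$. Either route closes the argument and establishes the equivalence of \eqref{thm:stat:1}--\eqref{thm:stat:4}.
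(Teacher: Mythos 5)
Your proposal is correct, and its decisive step is organized differently from the paper's. The paper proves the hard implication \eqref{thm:stat:1}$\Rightarrow$\eqref{thm:stat:2} directly: it writes $\rho_K=h_K\Phi_K[\rho]$ with $\Phi_K[\rho]$ the Kirkwood--Monroe right-hand side, observes that $\theta_\kappa$ then collapses to a symmetric elliptic form $\kappa(h_K-h_L)/\Lambda(\Phi_K[\rho]^{-1},\Phi_L[\rho]^{-1})$ involving the logarithmic mean, and tests the stationary equation with $h_K$ to force $h_K\equiv\const=1$; the equivalence with \eqref{thm:stat:4} is then read off afterwards from the simultaneous vanishing of $\alpha_\kappa$ and $\theta_\kappa$. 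You instead route everything through the constancy of the chemical potential $\mu_K$ and close the loop by going \eqref{thm:stat:1}$\Rightarrow$\eqref{thm:stat:4} first, either by viewing the stationary state as a time-constant solution and invoking Proposition~\ref{thm:FED}, or by testing \eqref{e:stat:def} with $\mu_K$ and using the factorization $(\mu_K-\mu_L)F_{KL}=\kappa\tau_{KL}\alpha_\kappa(\rho_K,\rho_L;d_{KL}q_{KL})$ together with $\alpha_\kappa\ge0$. These are cousins of the same idea --- test the nodal balance against a potential and exploit a sign --- but your version buys a cleaner reuse of the already-proven free-energy dissipation identity and makes the ``stationary $\Leftrightarrow$ detailed balance'' content explicit, whereas the paper's $h$-transform is self-contained and mirrors its maximum-principle computation from Lemma~\ref{L2}. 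Two small points to make rigorous in a write-up: the claim that an edge joining a vanishing and a positive density contributes $+\infty$ to $\cD^h$ needs the convention $\alpha_\kappa(a,0;v)=+\infty$ for $a>0$ (the paper instead obtains positivity only from \eqref{thm:stat:2}); and in Route~A you should state explicitly that $\rho^{n+1}=\rho^n=\rho$ is indeed a solution of the implicit scheme \eqref{e:time:flux} precisely because the nodal balances \eqref{e:stat:def} vanish, so that Proposition~\ref{thm:FED} applies. Neither is a gap in substance.
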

Note that the fixed point identity entails, in particular, that any stationary solution $\rho$ is strictly positive.

With this characterization of stationary states and thanks to the free energy dissipation relation~\eqref{e:FED:numerical} we can establish the large time behavior of the scheme. More precisely, we show that for any initial datum, approximate solutions converge towards a stationary state. As a particular consequence, this result proves that the set of stationary states has to be non-empty. Since our argument relies on the stability estimate in \eqref{e:stable}, we have to impose that smallness condition on $h$ and $\dt$.
\begin{theorem}[Large time behavior of the scheme]\label{thm:longtime}
Suppose that \eqref{smallness_condition} holds.  Let $\set{\rho_K^{n}}_{K,n}$ be a solution to the Schar\-fetter--Gummel scheme~\eqref{e:time:flux}, \eqref{e:def:F}, \eqref{e:def:q}. Let $\varPi^h$ be the set of stationary solutions. Then $\rho^n$ approaches $\varPi^h$ in the large time limit,
\begin{equation}\label{e:longtime}
	\lim_{n\to \infty} \dist_{\ell^1(\cT^h)}(\rho^n, \varPi^h) = 0 . 
\end{equation}
\end{theorem}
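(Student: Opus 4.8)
The plan is to prove \eqref{e:longtime} by a discrete LaSalle invariance principle, using the numerical free energy $\cF^h$ as a strict Lyapunov function for the one-step solution map of the scheme. Since \eqref{smallness_condition} is in force, Theorem~\ref{thm:well-posed} guarantees that from any $\rho\in\cP(\cT^h)$ the scheme~\eqref{e:time:flux}, \eqref{e:def:F}, \eqref{e:def:q} produces a \emph{unique} next iterate; I denote the resulting update by $S\colon\cP(\cT^h)\to\cP(\cT^h)$, so that $\rho^{n}=S^{n}\rho^0$. The stability estimate~\eqref{e:stable} with $n=1$ reads $\norm{S\rho-S\tilde\rho}_{\ell^1(\cT^h)}\le C\norm{\rho-\tilde\rho}_{\ell^1(\cT^h)}$, which is precisely the statement that $S$ is (Lipschitz) continuous on the compact simplex $\cP(\cT^h)$. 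Moreover a density $\rho$ is a fixed point of $S$ if and only if $\rho^{n+1}=\rho^n=\rho$ solves the scheme, i.e. if and only if $\rho$ satisfies~\eqref{e:stat:def}; hence $\operatorname{Fix}(S)=\varPi^h$, and in particular $\varPi^h$ is closed.

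Next I verify the Lyapunov structure. The functional $\cF^h$ is continuous on the closed simplex $\cP(\cT^h)$ (the entropy $\cS^h$ extends continuously with $0\log 0=0$, and $\cE^h$ is a polynomial), and it is bounded below since $\cE^h\ge0$ by~\eqref{A0} and $\cS^h\ge -|\hat\Omega|/e$. By the dissipation identity~\eqref{e:FED:numerical}, for every $\rho\in\cP(\cT^h)$
\begin{equation*}
 \cF^h(S\rho)-\cF^h(\rho) = -\dt\,\cD^h(S\rho)-\kappa\,\cH(\rho\mid S\rho)\le0,
\end{equation*}
so $\cF^h$ decreases along orbits. The decisive point is the identification of the equality set: since $\cD^h\ge0$ and $\cH\ge0$, equality $\cF^h(S\rho)=\cF^h(\rho)$ forces $\cH(\rho\mid S\rho)=0$; because $\cH$ is a weighted Kullback--Leibler divergence between the probability vectors $(|K|\rho_K)_K$ and $(|K|(S\rho)_K)_K$ (cf.~\eqref{e:def:RelEnt}), this yields $S\rho=\rho$, i.e. $\rho\in\varPi^h$. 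Conversely every $\rho\in\varPi^h$ is fixed by $S$ and thus gives equality. Hence $\set{\rho\in\cP(\cT^h):\cF^h(S\rho)=\cF^h(\rho)}=\varPi^h$. It is exactly here that the additional implicit-in-time term $\cH(\rho^n\mid\rho^{n+1})$ in~\eqref{e:FED:numerical} does the work, via Theorem~\ref{thm:stat}.

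With these two facts the conclusion follows from the standard LaSalle argument on the compact metric space $\cP(\cT^h)$. The sequence $\cF^h(\rho^n)$ is nonincreasing and bounded below, hence converges to some $c$. Let $\omega(\rho^0)$ be the set of $\ell^1$-limit points of the orbit; it is nonempty and compact by compactness of $\cP(\cT^h)$. For any $y=\lim_k\rho^{n_k}\in\omega(\rho^0)$, continuity of $\cF^h$ gives $\cF^h(y)=c$, while continuity of $S$ gives $Sy=\lim_k\rho^{n_k+1}\in\omega(\rho^0)$ and $\cF^h(Sy)=c=\cF^h(y)$. By the equality-set identification, $y\in\varPi^h$, so $\omega(\rho^0)\subseteq\varPi^h$. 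Finally, in a compact metric space every orbit satisfies $\dist_{\ell^1(\cT^h)}(\rho^n,\omega(\rho^0))\to0$; combined with $\omega(\rho^0)\subseteq\varPi^h$ this gives~\eqref{e:longtime}. As a by-product $\varPi^h\supseteq\omega(\rho^0)\neq\emptyset$, reproving that stationary states exist.

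The main obstacle is the continuity of the update map $S$: it is single-valued and Lipschitz only because the smallness condition~\eqref{smallness_condition} supplies the stability estimate~\eqref{e:stable}, and this is the sole reason~\eqref{smallness_condition} is assumed here. I also emphasize why I route the argument through $\cF^h$ and $S$ rather than simply passing $\cD^h(\rho^{n+1})\to0$ (which follows by summing~\eqref{e:FED:numerical}) to the limit: the dissipation $\cD^h$ is \emph{not} lower semicontinuous up to the boundary of the simplex, since the contribution $\alpha_\kappa(\rho_K,\rho_L;\cdot)$ in~\eqref{e:def:alpha} tends to $+\infty$ when a single cell mass vanishes but may stay bounded (indeed tend to $0$) when two neighboring masses vanish together. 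Evaluating $\cD^h$ at a boundary limit point is therefore unsafe, whereas the LaSalle scheme only ever uses the continuous functional $\cF^h$ and the continuous map $S$, together with the purely algebraic characterization $\operatorname{Fix}(S)=\varPi^h$ from Theorem~\ref{thm:stat}.
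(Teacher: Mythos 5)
Your proof is correct and follows essentially the same LaSalle-invariance route as the paper: continuity of the one-step map from the stability estimate \eqref{e:stable} under \eqref{smallness_condition}, compactness of $\cP(\cT^h)$, monotone decay of $\cF^h$ from \eqref{e:FED:numerical}, and identification of the $\omega$-limit set with $\varPi^h$. The only cosmetic difference is that you detect stationarity of $\omega$-limit points via $\cH(\rho\mid S\rho)=0\Rightarrow S\rho=\rho$ together with $\operatorname{Fix}(S)=\varPi^h$, whereas the paper telescopes the dissipation identity along the (positively invariant) limit orbit to conclude $\cD^h=0$ and then invokes Theorem~\ref{thm:stat}; both steps are sound and rest on the same nonnegativity of $\cH$ and $\cD^h$.
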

The statement~\eqref{e:longtime} is equivalent to the existence of a stationary state $\overline\rho^h\in \varPi^h$ and of a subsequence $n_k$ such that $\rho^{n_k} \to \bar\rho^h$ in $\ell^1(\cT^h)$.  A sufficient condition for the uniqueness of the limit would be that $\varPi^h$ consists only of isolated stationary points. Once this is established, we expect that for specific choices of $W$, some quantified convergence rates to stationary states could be established translating, for instance, the method of~\cite{CarrilloMcCannVillani2003} to the discrete setting. We leave a deeper understanding of the dependency of the set of stationary solutions $\varPi^h$ on the interaction potential $W$ for future work.

Given the discrete solution $\{\rho_K^n\}$ of a numerical finite volume scheme, we can introduce the \emph{approximate solution}  $\rho_{\dt,h}\in L^1(\R_+\times \hat \Omega)$ of the PDE, given by
\[
\rho_{\dt, h} = \rho_K^n\quad \text{for a.e. }(t,x)\in \left[t^n,t^{n+1}\right) \times K.
\]
The estimate~\eqref{e:FED:numerical} also carries a priori gradient information about the discrete gradient in the dissipation functional. Exploiting this, we can establish sufficient compactness of the discrete scheme in the limit $h\to 0$ to obtain a  convergence result.
\begin{theorem}[Convergence]\label{thm:convergence}
	The approximate solution sequence $\{\rho_{\dt,h}\}$ converges as $h\to0$ and $\dt\to0$ in $L^2((0,T);L^1(\Omega))$ towards a distributional  solution of the continuous problem. Moreover, any stationary solution of the Scharfetter--Gummel scheme converges in $L^1(\Omega)$ to a stationary solution of the continuous problem.
\end{theorem}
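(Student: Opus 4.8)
The plan is to follow the classical road map for convergence of finite-volume schemes: extract uniform a priori bounds from the free-energy dissipation identity, upgrade them to strong compactness, and then pass to the limit in a discrete weak formulation, using the consistency of the Scharfetter--Gummel flux with the continuous flux $\kappa\nabla\rho + \rho\nabla W\ast\rho$. First I would collect the a priori estimates. Multiplying the dissipation identity~\eqref{e:FED:numerical} by $\dt$ and summing over $n=0,\dots,N-1$, together with $\cF^h\ge 0$ and $\cF^h(\rho^N)\le\cF^h(\rho^0)$, yields the uniform bound
\[
\sum_{n\ge 0}\dt\,\cD^h(\rho^{n+1}) + \kappa\sum_{n\ge 0}\cH(\rho^n\mid\rho^{n+1}) \le \cF^h(\rho^0),
\]
whose right-hand side is finite since $W$ is bounded and the initial free energy is finite. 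The dissipation bound furnishes, via Lemma~\ref{prop:grad_est}, a discrete space-time control on the gradient of $\rhoh$, while the relative-entropy sum provides the weak BV control on the discrete time increments. These are precisely the ingredients feeding Proposition~\ref{prop:compactness}, which produces a subsequence with $\rhoh\to\rho$ strongly in $L^2((0,T);L^1(\Omega))$, where $\rho\ge 0$ has unit mass.

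Next I would identify the limit as a distributional solution. Fix $\varphi\in C_c^\infty([0,T)\times\overline\Omega)$, multiply~\eqref{e:time:flux} by $\dt\,\varphi(t^n,x_K)$, and sum over $K$ and $n$. Abel summation moves the time difference onto $\varphi$, producing the term converging to $-\iint\rho\,\partial_t\varphi$ plus the initial contribution, thanks to the strong convergence of $\rhoh$. For the flux term I would use the antisymmetry~\eqref{e:flux:antisymmetry} to rewrite $\sum_K\varphi(x_K)\sum_{L\sim K}F_{KL}^{n+1}$ as a sum over edges weighted by $\varphi(x_L)-\varphi(x_K)\approx\nabla\varphi\cdot(x_L-x_K)$, and then invoke the split form~\eqref{e:flux:identity2}. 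Since $q_{KL}^{n+1}$ is uniformly bounded by $\Lip(W)$ and $d_{KL}\le h\to0$, one has $\tfrac{d_{KL}q_{KL}^{n+1}}{2}\coth\!\bigl(\tfrac{d_{KL}q_{KL}^{n+1}}{2\kappa}\bigr)\to\kappa$, so the second summand in~\eqref{e:flux:identity2} is consistent with the diffusive flux $\kappa\nabla\rho$, while the first summand is consistent with the aggregation flux $\rho\,\nabla W\ast\rho$ once one checks that the discrete convolution in~\eqref{e:def:q} converges to $W\ast\rho$; this follows from the Lipschitz continuity of $W$ together with the strong $L^1$ convergence of $\rhoh$, the arithmetic mean in time being harmless since both time slices share the same limit.

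For the second assertion, let $\rho^h$ be a stationary solution of the scheme. By Theorem~\ref{thm:stat} it satisfies the discrete Kirkwood--Monroe relation~\eqref{e:stat:fixed_point}. Because $W$ is bounded, the exponents are uniformly bounded, so $\rho^h$ is bounded above and bounded below away from zero, uniformly in $h$; this gives compactness of the piecewise-constant interpolants in $L^1(\Omega)$. Passing to the limit in~\eqref{e:stat:fixed_point}—again using that the discrete convolution converges to $W\ast\rho$ and that the normalization $Z^h(\rho^h)$ converges to the continuous partition function—shows that the limit solves the continuous Kirkwood--Monroe equation~\eqref{e:KirkwoodMonroe}, hence is a stationary solution of~\eqref{e:agg-diff}.

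The main obstacle I anticipate is the consistency of the flux term: one must control the error between the nonlinear discrete flux and the continuous flux, combining the Taylor expansion of $s\coth(s/2\kappa)$ near $s=0$ with the discrete gradient bounds (to tame the factors $\rho_K-\rho_L$) and with the convergence of the nonlocal velocity $q$. The nonlocality of $q$ is the most delicate point, since it couples all cells simultaneously; establishing that $\sum_{J}|J|\rho_J\,W(x_K-x_J)\to W\ast\rho$ in a sense strong enough to survive pairing with $\nabla\varphi$ across all edges is where the Lipschitz regularity of $W$ and the strong $L^1$ convergence of $\rhoh$ must be used with care.
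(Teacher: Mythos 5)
Your road map for the time-dependent part coincides with the paper's: free-energy dissipation gives the gradient and weak-BV bounds of Lemma~\ref{prop:grad_est}, these feed the translation estimates and Proposition~\ref{prop:compactness}, and one then passes to the limit in the discrete weak formulation using the split form \eqref{e:flux:identity2} with $s\coth(s)\to 1$. Two points deserve a warning, though. First, for the diffusive term you will need not only the strong $L^2_tL^1_x$ convergence of $\rhoh$ but also the \emph{weak convergence of the discrete gradients} $\grad^h\rhoh\rightharpoonup\grad\rho$ (the second half of Proposition~\ref{prop:compactness}, which in turn rests on the Fisher-information bound \eqref{Fisher_information} to guarantee that $\grad\rho$ exists in $L^2((0,T);L^1(\Omega))$); pairing $(\rho_K-\rho_L)$ with $(\zeta_K-\zeta_L)$ does not reduce to the strong convergence of $\rhoh$ alone. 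Second, the consistency of $q_{KL}^{n+1}$ is \emph{not} a statement about $W\ast\rho$ but about the difference quotient $\bigl(W(x_K-x_J)-W(x_L-x_J)\bigr)/d_{KL}$ converging to $\grad W(x-y)\cdot\nu_{KL}$; Lipschitz continuity \eqref{A3} alone is insufficient for this, and the paper uses \eqref{A2} ($W\in C^1$ away from the origin) together with an $\eps$-splitting that isolates the cells $J$ within distance $2\eps$ of $x_K$, where only the Lipschitz bound is applied and the contribution is small because $\rhoh$ carries little mass there. You correctly flag this as the delicate point, but the mechanism (the $C^1$/Lipschitz dichotomy) is the missing idea.

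The genuine gap is in the stationary part. You claim that the uniform two-sided bounds on $\rho^h$ coming from the boundedness of the exponent in \eqref{e:stat:fixed_point} ``give compactness of the piecewise-constant interpolants in $L^1(\Omega)$.'' They do not: uniform $L^\infty$ bounds yield only weak-$*$ compactness, and a sequence of piecewise constants oscillating between two admissible values on a mesh of size $h\to0$ converges weakly but not strongly in $L^1$. What rescues the argument is that the Kirkwood--Monroe relation forces an \emph{equicontinuity} estimate: since $x\mapsto\sum_J|J|\rho_J W(x-x_J)$ is uniformly Lipschitz (constant $\Lip(W)$, total mass one), one obtains $|\rho_K-\rho_L|\lesssim \kappa^{-1}\Lip(W)\,d_{KL}(\rho_K+\rho_L)$ and hence the uniform discrete BV bound of Lemma~\ref{L10}, which combined with the translation estimate of Lemma~\ref{L11} gives strong $L^1$ compactness via Riesz--Fr\'echet--Kolmogorov. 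You need to insert this step; once it is in place, passing to the limit in the fixed-point relation (the paper does so in logarithmic form, using the uniform bound \eqref{403} on $|\log\rho_K|$ and dominated convergence, which is slightly more robust than exponentiating) completes the argument as you describe.
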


\subsection{Numerical experiments}

We implement the scheme writing the flux as in ~\eqref{e:flux:identity} and using the solution function $\theta_\kappa$ of the cell problem defined in~\eqref{e:def:Theta}. Hence, we consider the flux functional
\begin{equation}\label{e:flux:numerical}
	\frac{F_{KL}^{n+1}[\rho^{n+1}]}{\tau_{KL}} = \theta_\kappa\bra*{\rho_K^{n+1},\rho_L^{n+1}, d_{KL} q_{KL}^{n+1}} 
\end{equation}
with interaction fluxes $q_{KL}^{n+1}$ defined in~\eqref{e:def:q},
and rewrite the scheme~\eqref{e:time:flux} as the  fixed point equation
\begin{equation}\label{e:numeric:fixpoint}
	\rho^{n+1}_K = \rho^{n}_K + \frac{\delta t}{\abs{K}} \sum_{L \sim K} F_{KL}^{n+1}[\rho^{n+1}] . 
\end{equation}
For comparison, we also study the upwind discretization of the flux term proposed in~\cite{BailoCarrilloHu2018} and given by
\begin{equation}\label{e:flux:numerical:upwind}
	\frac{\tilde F_{KL}^{n+1}[\rho^{n+1}]}{\tau_{KL}} = \rho_K^{n+1}\bra*{Q_{KL}^{n+1}}_+ - \rho_L^{n+1}\bra*{Q_{KL}^{n+1}}_- \quad\text{with}\quad Q_{KL}^{n+1} = \kappa \log \frac{\rho_K^{n+1}}{\rho_L^{n+1}} + d_{KL} q_{KL}^{n+1} .
\end{equation}
Hereby, we truncate the logarithm at the machine precision for avoiding possible occurrences of \texttt{NaN}. By this construction, the upwind scheme has the same stationary states characterized by the fixed point mapping~\eqref{e:stat:fixed_point}, which allows us to compare the long-time behavior of both methods.

In addition to the evolution equation, we also implement the fixed point mapping~\eqref{e:stat:fixed_point} to detect stationary states besides the uniform state. By comparing their free energies, we can judge which of those is the global minimum. The free energy gap to this global minimum is used in the following plots on a semilog scale.

Our implementation is written in the Julia language~\cite{Julia-2017}, and the fixed point problem~\eqref{e:numeric:fixpoint} is solved using a Newton method through the NLSolve-package~\cite{NLSolve}. 
In our numerical test, we use an adaptive time-stepping algorithm to resolve metastable situations and to take advantage of the implicit time. More specifically, the time-step $\dt$ is initially chosen to be~$h$ and later adjusted to be larger if the number of Newton steps is small and smaller if the number of  Newton steps is large.\footnote{The exact strategy is that the time step is halved if more than $4$ Newton iterations are needed and increased by $10\%$ if less than $2$ Newton iterations are performed.} The simulation is stopped if the free energy gap is of order $10^{-15}$ or the change of free energy during several subsequent time-steps is of the order of the machine precision.

In the Figure~\ref{Fig:Kuramoto:subsup},
\begin{figure}[t]
	\centering
	\includegraphics[width=0.48\textwidth]{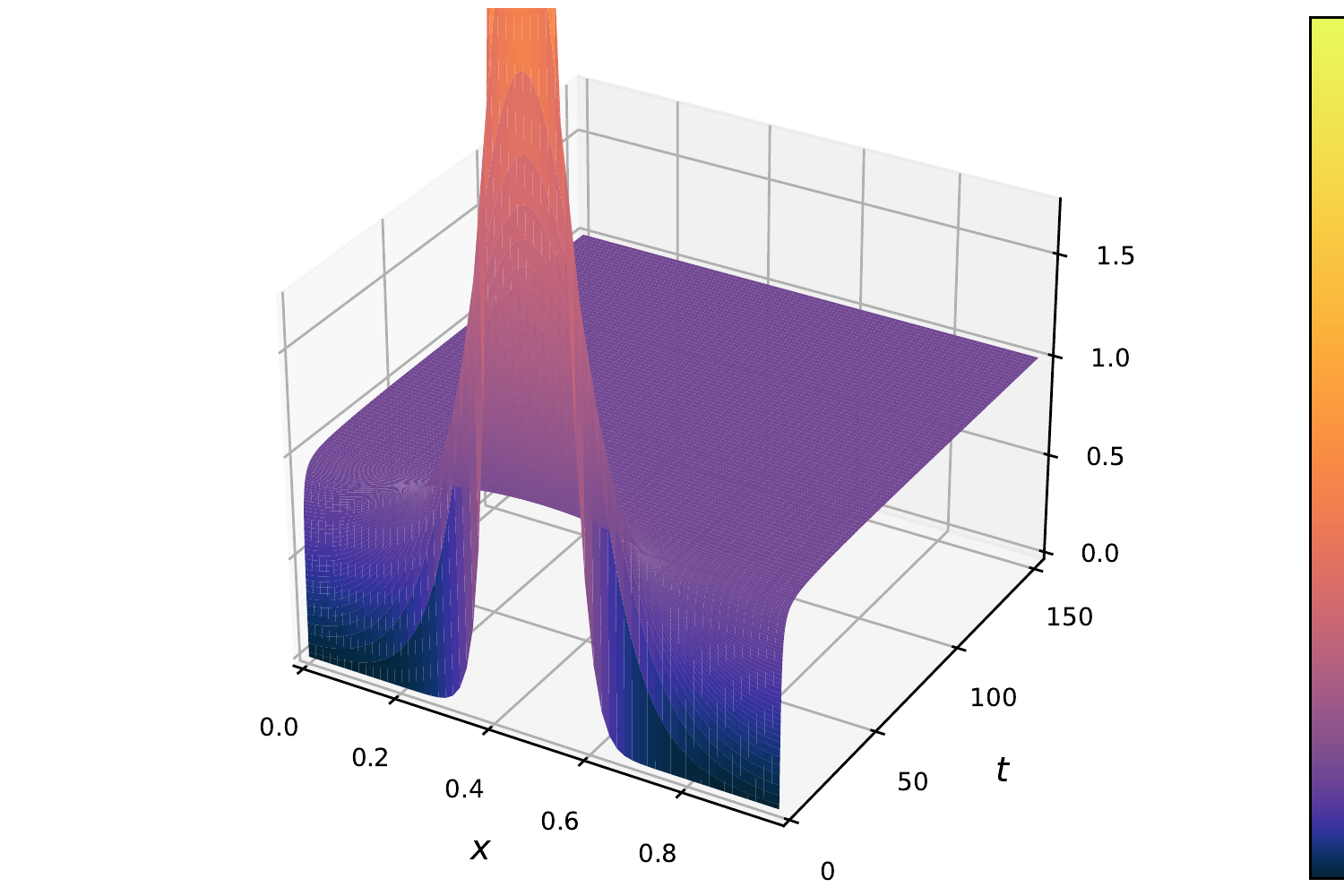}
	\hfill
	\includegraphics[width=0.48\textwidth]{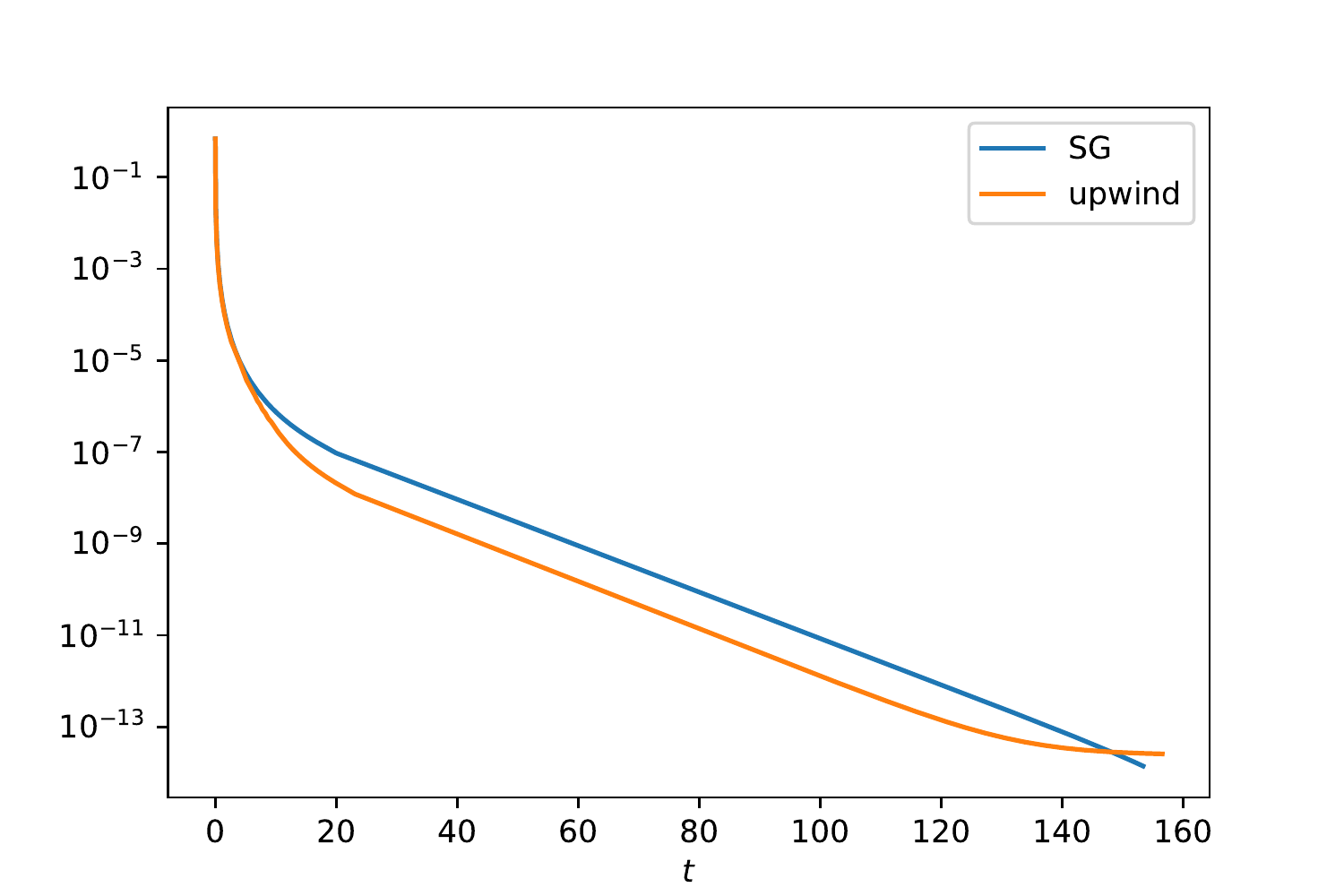}\\
	
	\includegraphics[width=0.48\textwidth]{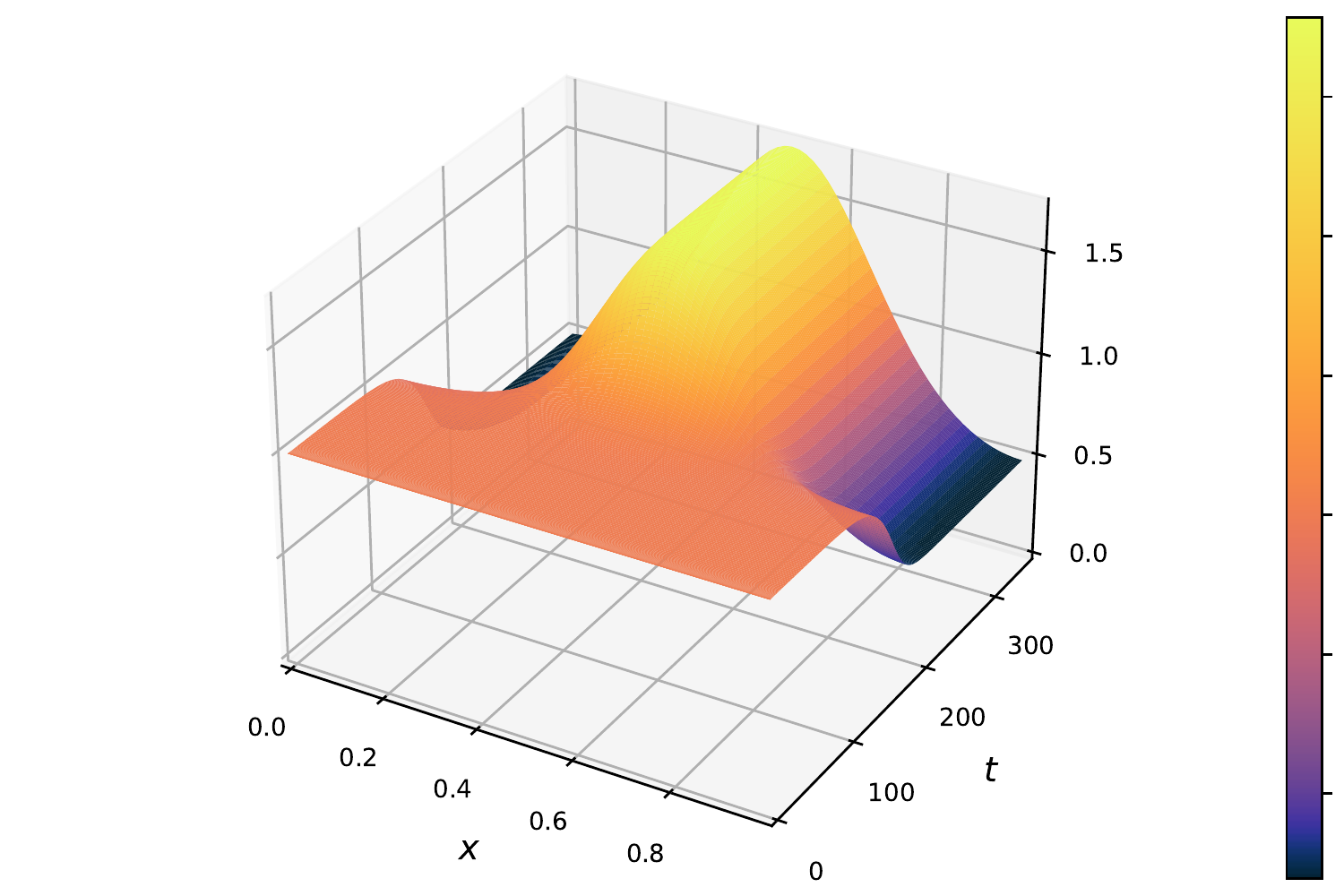}
	\hfill
	\includegraphics[width=0.48\textwidth]{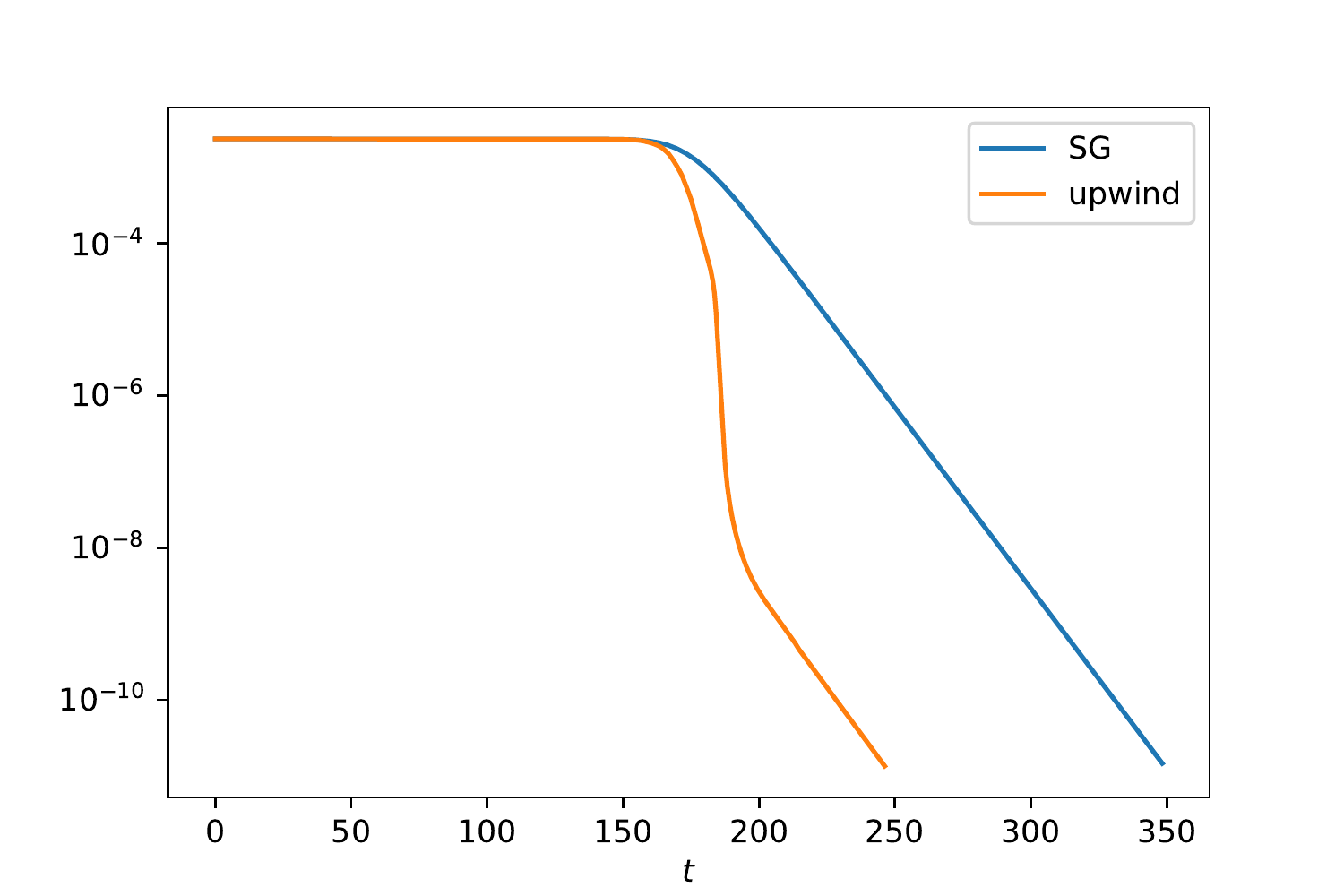}
	\caption{The evolution for the Kuramoto model of $\rho^h$ (right) and the free energy in semi-log-scale compared to the upwind scheme. The parameters are $\kappa=1$,
		where $\sigma= 1.9$ (first row) for the subcritical regime and $\sigma=2.1$ (second row) for the supercritical regime.
		We work on a uniform mesh on the torus $[0,1]$ discretized with $h=2^{-6}$.}\label{Fig:Kuramoto:subsup}
\end{figure} 
we show the evolution of the classical Kuramoto model, in which the interaction principle is given by $W(x) = -\sigma \cos(2\pi x) $ for some positive real $\sigma$. A continuous phase transition occurs in this model for $\sigma=2$, and we solve the evolution equation for sub- and supercritical values. We observe that both schemes can resolve this model and converge in free energy to the unique stable stationary state. Hereby, the convergence of the upwind scheme is faster in physical time, and we believe that this effect is due to excess numerical diffusion introduced by the scheme. For this model, it is expected that the free energy relaxes exponentially fast provided the solution is close enough to a local minimum of the free energy. We can observe this behavior for the proposed Scharfetter--Gummel scheme, whereas the upwind scheme shows a slightly artificial behavior in this regard. We also stress that despite the seemingly faster relaxation computed by the upwind scheme in physical time, the computation costs are comparable (see Table~\ref{Tab:ComputCost}.
\begin{table}[t]
	\begin{tabular}{|ll|l|l|l|l|}
		\hline
		Figure & scheme & runtime [s] & steps & alloc $\times 1e6$ & mem  [GiB] \\ \hline
		Fig.~\ref{Fig:Kuramoto:subsup}: $\sigma=1.9$& SG & $39$ & $171$ & $1.0$ & $16$ \\ \hline
		Fig.~\ref{Fig:Kuramoto:subsup}: $\sigma=1.9$& upwind & $36$ & $189$ & $1.1$ & $16$ \\ \hline
		Fig.~\ref{Fig:Kuramoto:subsup}: $\sigma=2.1$& SG & $53$ & $279$ & $1.5$ & $23$ \\ \hline
		Fig.~\ref{Fig:Kuramoto:subsup}: $\sigma=2.1$& upwind & $65$ & $287$ & $2.0$ & $30$\\ \hline
		Fig.~\ref{Fig:Gauss} & SG & $210$ & $116$ & $5.5$ & $83$ \\ \hline
		Fig.~\ref{Fig:Gauss}& upwind & $540$ & $279$ & $17$ & $250$ \\ \hline
	\end{tabular}\\[6pt]
	\caption{\label{Tab:ComputCost} Run-time, computational costs in terms of allocations and total memory usage measured with the Julia \texttt{@time} command. All simulations are run on an AMD Ryzen 5 PRO 3500U with 16GiB memory.}
\end{table}

Other test cases are systems with a discontinuous phase transition, where dynamical metastability is expected. This is generically the case for attractive interaction potentials that are sufficiently localized. We refer to~\cite[Lemma 5.3]{GvalaniSchlichting20} for the specific assumptions on $W$ to ensure this situation.
In Figure~\ref{Fig:Gauss}, 
\begin{figure}[t]
	\includegraphics[width=0.48\textwidth]{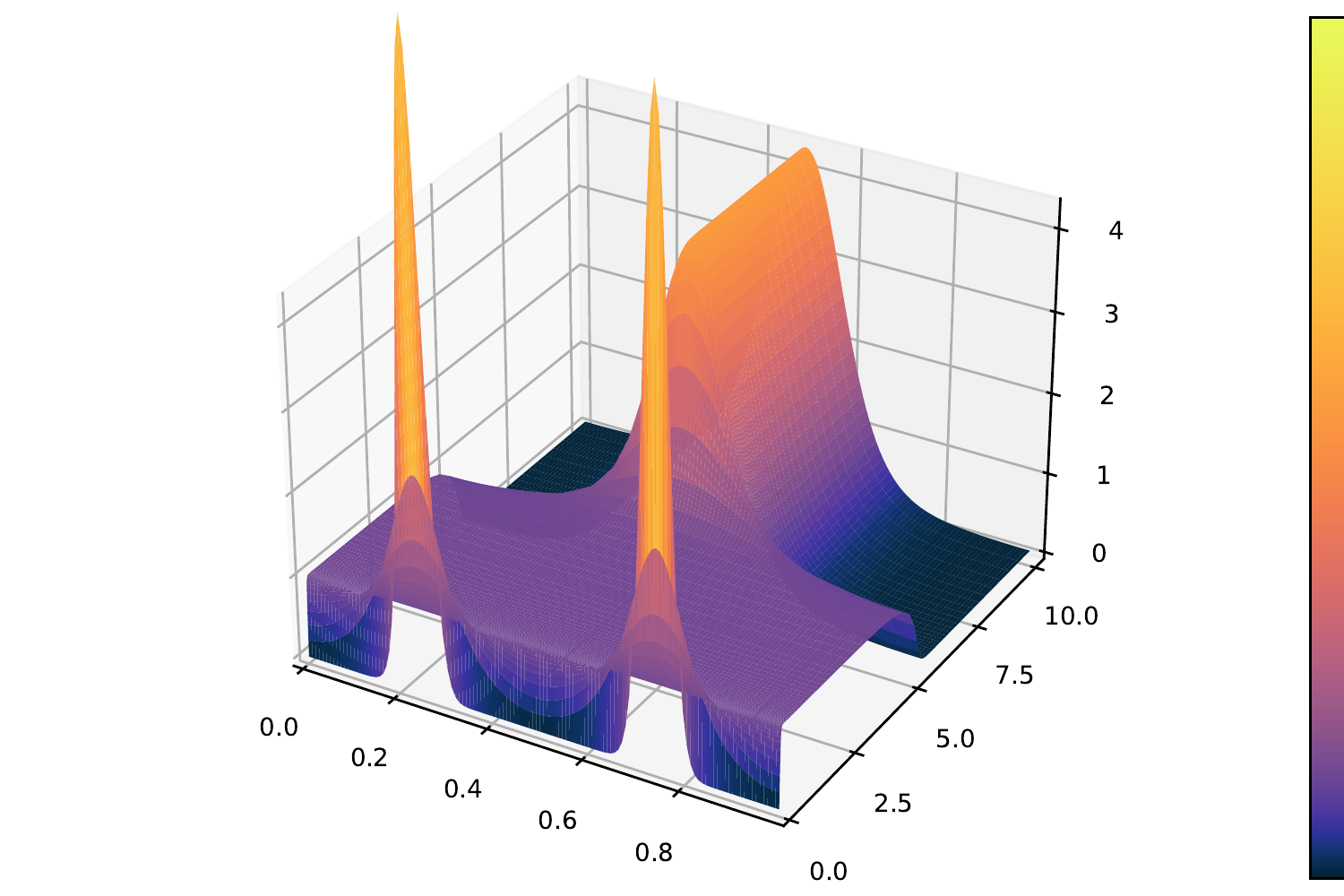}
	\hfill
	\includegraphics[width=0.48\textwidth]{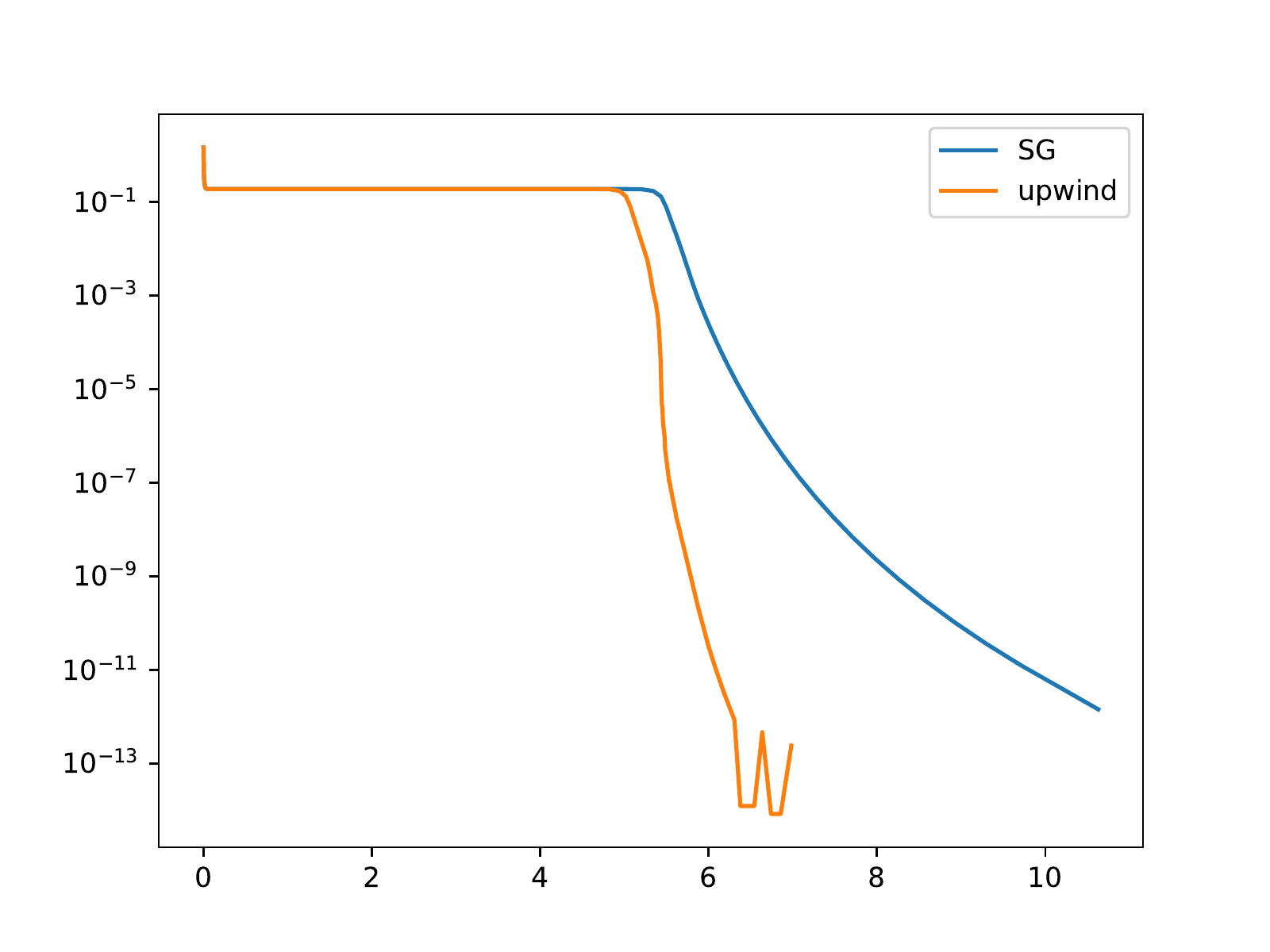}
	\caption{\label{Fig:Gauss} The evolution of $\rho^h$ (right) and the free energy in semi-log-scale in comparison to the Upwind scheme from~\cite{BailoCarrilloHu2018}. The parameters are $h=2^{-7}$, $\kappa=1$.}
\end{figure}
an interaction potential of the form of a negative Gaussian with a sufficiently small variance is chosen, $
W(x) = -6 \exp(-\frac{\abs{x}^2}{\sigma})$ with $ \sigma = 0.05 $, which leads to the formation of a discontinuous phase transition as proven in~\cite{CarrilloGvalaniPavliotisSchlichting2019}. 
The simulation is started from an initial state with two concentrated peaks at $0.25$ and $0.75$ and an additional perturbation at $0.5$, which ensures the convergence of both schemes towards the global minimum featuring a single peak at $0.5$. Let us emphasize that this model needs a structure-preserving resolution of the flux since the distance of the solution at time $0.5$ to the uniform state in $L^\infty$ is of order $10^{-8}$.
The free energy for the upwind scheme shows some artificial convergence behavior due to probably increased numerical diffusivity. 
We again emphasize that although the convergence in physical time is faster for the Upwind scheme, the numerical cost is a factor of $2.5$ larger in comparison to the Scharfetter-Gummel scheme as reported in Table~\ref{Tab:ComputCost}. The adaptive time-stepping for upwind performs worse because, on average more Newton steps are performed for the convergence to the fixed point of~\eqref{e:flux:numerical:upwind}.

\subsection{Related work}

A considerable amount of numerical schemes has been proposed and analyzed for advection-diffusion equations in general and aggregation-diffusion equations in particular. In the following, we give a (necessarily incomplete) list of references. 

The class of schemes that we adapt in the present work goes back to Scharfetter and Gummel in 1969~\cite{SG1969}. In this work, the authors' objective is to derive effective numerical methods for simulating semiconductors, where both advection and diffusion-dominated regimes can occur. Further contributions for numerically solving semiconductor models include, for instance, mixed exponential methods, e.g.,~\cite{BrezziMariniPietra89,Juengel95,JuengelPietra1997}, or upwind finite volume schemes, e.g.,~\cite{ChainaisHillairetLiuPeng03,ChainaisHillairetPeng03,ChainaisHillairetPeng04}. For more references about the developments of schemes for drift-diffusion models, with thorough background on semiconductor models, see, e.g.,~\cite{Farrell_etal2017,Cances_etal2019}.

In the context of aggregation-diffusion equations~\eqref{e:agg-diff}, there has been quite some research activity in recent years, with a special focus on structure-preserving properties. For instance, in \cite{BailoCarrilloHu2018,Bailo_etal2020}, a (semi-)implicit discretization of~\eqref{e:agg-diff} and nonlinear variants based on upwind fluxes is proposed and shown to converge. In~\cite{Liu2019}, the starting point for a scheme is formula~\eqref{e:agg-diff:sym}, which is discretized by symmetric differences. This work shows positivity and free energy dissipation. Still, the scheme in~\cite{Liu2019} has the drawback that it is formulated for $\kappa=1$ only, and we expect stability issues for such an approach for small diffusivity constants.
In \cite{AlmeidaBubbaPerthamePouchol2019}, two schemes for Fokker--Planck and generalized Keller--Segel models are proposed. The approach, which the authors also dedicate to Scharfetter and Gummel, is based on the symmetric reformulation~\eqref{e:agg-diff:sym}, which is then discretized together with some upwind technique. Hence, it differs from our more problem-specific choice of the flux interpolation~\eqref{e:flux:identity}, but is explicit also for nonlinear mobilities.
We also mention Lagrangian particle approximations, e.g.,~\cite{CarrilloCraigPatacchini2019}, which are motivated by  Lagrangian particle interpretations of aggregation equations \cite{CDFLS11}.  For further references on this subject, we refer to the recent review~\cite{CarrilloCraigYao2019}.

In the present work, we consider linear diffusions, but also a generalization of the Scharfetter--Gummel scheme to aggregation equations with nonlinear diffusions seems possible. Related to this, we mention \cite{EymardFuhrmannGaertner06,BessemoulinChatard12}, where advection-diffusion equations with nonlinear diffusions (and nonlinear fluxes) are studied. Notice that the work \cite{BessemoulinChatard12} is similar to ours in the sense that steady states and large time dynamics are investigated. (Here, the motivation is again a model for semiconductors.)  Both works and the fact that many relevant nonlinear aggregation-diffusion equations own a free energy functional similar to~\eqref{e:free_energy} suggest that analysis of the finite free energy setting seems to be accessible also for nonlinear diffusions.

We want to mention that the theoretical investigation of the large time behavior of Scharfetter--Gummel-related schemes for advection-diffusion equation is contained in~\cite{BessemoulinChatardChainaisHillairet2017,Cances_etal2019-large,LiLiu2020} based on discrete function inequalities, that go back to~\cite{chatard_asymptotic_2011,BessemoulinChatard2014}. Already before, the scheme in \cite{ChainaisHillairetFibet07} uses a nonlinear approximation of the diffusion flux, which in the linear setting would essentially rely on the observation that the Laplacian can be interpreted as an advection operator with nonlinear flux according to the identity $\laplace \rho = \div(\rho\grad\log\rho)$, and shows preservation of the large time behavior of the continuous model. 

Concerning the rate of convergence for this and related schemes, we refer to the recent works~\cite{LagoutiereVauchelet2017,DelarueLagoutiereVauchelet2017}, which show for the upwind scheme applied to the pure aggregation equation, i.e., $\kappa=0$, an explicit rate of order $\tfrac{1}{2}$ with respect to the Wasserstein distance. Since the proposed Scharfetter--Gummel scheme is a direct generalization to the case with a diffusion of strength $\kappa$, we conjecture that a similar analysis is possible in this case. We leave this project for future research.

To make direct use of the gradient flow formulation~\eqref{e:agg-diff:GF}, it seems natural to construct a discretization based on an implicit-in-time minimizing movement scheme as introduced in~\cite{JordanKinderlehrerOtto1998}. Although analytically appealing, such an approach usually suffers from the high cost for solving an implicit Euler scheme involving the Wasserstein distance in each time step. 
Recent advances that circumvent this problem are based on the Benamou--Brenier dynamical formulation of the Wasserstein distance~\cite{BB2000}. For instance, in~\cite{BenamouCarlierLaborde2016}, the computation of the Wasserstein distance is turned in each time step into a saddle point problem, whereas in~\cite{LiLuWang2020} the authors convexify the minimizing movement scheme via a suitable entropic regularization. 

In the present work, we instead followed the \emph{discretize first, optimize later} paradigm, which brought us to the Scharfetter--Gummel interpolation through the cell-problem in Section~\ref{sec:cell_problem}.
Concerning the free energy dissipation relation~\eqref{e:FED:numerical} it is a natural question whether the Scharfetter--Gummel scheme inhabits a gradient structure already on the discrete level. 
Indeed, as pointed out already in Remark~\ref{rem:SG:GF} about the Onsager relation, our scheme can be considered as a generalized gradient flow in the spirit of~\cite{LieroMielkePeletierRenger2017,peletier2020jump}.
Here, the notion \emph{generalized} refers to the Onsager relation~\eqref{e:phi_kappa} being a monotone but nonlinear function of the force. In comparison, previous works on gradient flow formulations of Markov chains~\cite{CHLZ12}, motivated by the upwind scheme and also the discrete McKean--Vlasov dynamics on graphs~\cite{EFLS2016}, a linear relationship is used. This is also true for~\cite{DisserLiero2015,Heida_etal2020}, whose discretizations of Fokker--Planck equations seem to be limited to linear drift terms.

Related to this discussion, we finally mention the two works~\cite{CancesGallouetTodeschi2019,EPSS19}, which provide a gradient flow formulation of the upwind scheme on finite volumes. In fact, the variational description in those approaches reduces the Onsager relation~\eqref{e:phi_kappa} to the upwind flux, that is $\phi_\kappa(a,b,\xi)\to \phi_0(a,b,\xi)= a\, \xi_+ - b \, \xi_-$ for $\kappa \to 0$.

\section{Proofs}\label{s:proofs}

\subsection{Existence and first properties}
In this subsection we provide the proof of Theorem~\ref{thm:well-posed} except for the stability estimate, which will be provided in Subsection \ref{SS:free_energy} below.

The following two lemmas represent general results for the Scharfetter--Gummel scheme and do not rely on the specific choice of the driving vector field $q_{KL}$ in~\eqref{e:def:q}. 
First, we show that the scheme is conservative. 
\begin{lemma}\label{L1} 
Let $\rho_{\dt, h}$ be an approximate solution to the Scharfetter--Gummel scheme~\eqref{e:time:flux} and~\eqref{e:def:F} with antisymmetric driving vectorfield $\set{q_{KL}^n}_{K,L,n}$, that is $q_{KL}^n=-q_{KL}^n$ for all $K,L\in \cT^h$ and $n\in \N_0$. Then $\rho_{\dt, h}$ is mass preserving in the sense that
\begin{equation}\label{1}
\int_{\Omega} \rho_{\delta t, h}(t,x)\dx{x} = \int_{\Omega} \rho_h^0(x)\dx{x},
\end{equation}
for any $t>0$.
\end{lemma}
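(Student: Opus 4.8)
The plan is to establish mass conservation directly at the fully discrete level: sum the update rule \eqref{e:time:flux} over all cells, cancel the interior fluxes by antisymmetry, and then identify the resulting cellwise sum with the integral of the piecewise-constant approximate solution.

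First I would fix a time index $n$ and sum the discrete evolution equation \eqref{e:time:flux} over every cell $K\in\cT^h$, obtaining
\[
\sum_{K}\abs{K}\,\frac{\rho_K^{n+1}-\rho_K^n}{\dt}+\sum_{K}\sum_{L\sim K}F_{KL}^{n+1}=0 .
\]
The decisive point is that the double flux sum vanishes. Each interior edge $K\edge L$ is shared by exactly the two neighbors $K$ and $L$, so both $F_{KL}^{n+1}$ and $F_{LK}^{n+1}$ occur in the double sum; by the antisymmetry $F_{KL}^{n+1}=-F_{LK}^{n+1}$ recorded in \eqref{e:flux:antisymmetry} their contributions cancel. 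For the flux form \eqref{e:def:F} this antisymmetry follows at once from $\tau_{KL}=\tau_{LK}$ and $d_{KL}=d_{LK}$ together with the hypothesis $q_{KL}^{n}=-q_{LK}^{n}$. Since the scheme carries no flux across the outer boundary $\partial\hatOmega$, there are no unpaired boundary terms, and I would make the cancellation explicit by symmetrizing,
\[
\sum_{K}\sum_{L\sim K}F_{KL}^{n+1}=\tfrac12\sum_{K}\sum_{L\sim K}\bra*{F_{KL}^{n+1}+F_{LK}^{n+1}}=0 .
\]
Consequently $\sum_{K}\abs{K}\rho_K^{n+1}=\sum_{K}\abs{K}\rho_K^n$ for every $n$.

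By induction on $n$ starting from $n=0$, the total cell mass $\sum_{K}\abs{K}\rho_K^n$ is independent of $n$ and equals $\sum_{K}\abs{K}\rho_K^0$. Recalling that $\rho_{\dt,h}$ equals $\rho_K^n$ on each cell $K$ for $t\in[t^n,t^{n+1})$, this cellwise sum is exactly the integral of $\rho_{\dt,h}(t,\cdot)$ over the union of cells $\hatOmega$, which therefore coincides with $\sum_{K}\abs{K}\rho_K^0=\int\rho_h^0$ for all $t>0$, yielding \eqref{1}. The argument is a discrete divergence theorem, so no genuine analytic difficulty arises; the only step requiring care is the bookkeeping in the double flux sum, ensuring that summing over all ordered neighbor pairs groups the edges into antisymmetric pairs with no leftover boundary contribution, and the accompanying identification of the cellwise mass with the stated integral over the computational domain.
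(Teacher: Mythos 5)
Your proposal is correct and follows essentially the same route as the paper's own proof: summing \eqref{e:time:flux} over all cells, cancelling the flux double sum by symmetrization using the antisymmetry \eqref{e:flux:antisymmetry}, and then iterating in $n$ and identifying $\sum_K\abs{K}\rho_K^n$ with the integral of the piecewise-constant function. No gaps; the extra remark on the absence of boundary fluxes and the verification of $F_{KL}^{n+1}=-F_{LK}^{n+1}$ from \eqref{e:def:F} are welcome but not a different argument.
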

\begin{proof}
The conservativity \eqref{1} of the scheme is a consequence of the antisymmetry~\eqref{e:flux:antisymmetry}, which implies by symmetrization
\[
 \sum_{K} \sum_{L\sim K} F_{KL}^{n+1} = \frac{1}{2}\sum_{K} \sum_{L\sim K} \bra*{F^{n+1}_{KL} + F^{n+1}_{LK}} = 0 
\]
and hence
\begin{equation}\label{e:scheme:conserve}
 \sum_{K} \abs{K} \rho^{n+1}_K  = \sum_{K} \abs{K} \rho^{n}_K + \dt \sum_{K} \sum_{L\sim K} F^{n+1}_{KL} = \sum_{K} \abs{K} \rho^{n}_K .
\end{equation}
The statement in \eqref{1} now follows by iteration and the fact that $\int_{\Omega} \rho_{\dt, h}\dx{x} =\sum_K|K|\rho_K^n$ for some $n\in\N_0$.
\end{proof}

Our next goal is to show the positivity of the scheme.
\begin{lemma}\label{L2}
Let $\rho_{\dt, h}$ be an approximate solution to the Scharfetter--Gummel scheme~\eqref{e:time:flux} and~\eqref{e:def:F} with antisymmetric driving vectorfield $\set{q_{KL}^n}_{K,L,n}$ of potential form, that is for some $\set{V_K^n}_{K,n}$ it holds
\[
  d_{KL} q_{KL}^n = V_K^n - V_L^n \qquad\text{for all } K,L\in \cT \text{ and } n\in \N_0 .
\]
If the initial datum $\rho^0_h$ is nonnegative, then $\rho_{K}^n$ is positive for all $K\in \cT^h$ and $n\in \N$.
\end{lemma}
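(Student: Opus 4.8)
The plan is to freeze the solution at the time step under consideration (its existence being guaranteed by Theorem~\ref{thm:well-posed}(i)) and to read the update $\rho^n\mapsto\rho^{n+1}$ as a linear system. Writing~\eqref{e:time:flux} with the flux~\eqref{e:def:F} cell by cell and collecting the $\rho^{n+1}$-terms gives
\[
\Bigl( |K| + \dt \sum_{L\sim K}\tau_{KL} B_\kappa(d_{LK} q_{LK}^{n+1})\Bigr)\rho_K^{n+1} - \dt\sum_{L\sim K}\tau_{KL} B_\kappa(d_{KL}q_{KL}^{n+1})\rho_L^{n+1} = |K|\rho_K^n,
\]
that is, $A\rho^{n+1}=b$ with $b_K=|K|\rho_K^n$ and a matrix $A=(A_{KL})$ whose entries are fixed real numbers once the solution is frozen. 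Since $B_\kappa>0$ everywhere, $A$ has strictly positive diagonal and nonpositive off-diagonal entries, so $A$ is a Z-matrix.

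The key structural step is to compute the column sums of $A$. Summing $(Ax)_K$ over $K$ for an arbitrary vector $x$, the two edge-contributions cancel after relabeling $(K,L)\leftrightarrow(L,K)$ and using $\tau_{KL}=\tau_{LK}$ and $d_{KL}=d_{LK}$ — this is exactly the mass-conservation cancellation behind Lemma~\ref{L1} — leaving $\sum_K (Ax)_K = \sum_K |K| x_K$. Hence every column sums to $\sum_K A_{KM}=|M|>0$, which reads $A_{MM}-\sum_{K\neq M}|A_{KM}|=|M|>0$. Thus $A$ is a strictly column-diagonally-dominant Z-matrix with positive diagonal, hence a nonsingular M-matrix with $A^{-1}\geq 0$ entrywise; in particular $\rho^{n+1}=A^{-1}b\geq 0$ whenever $\rho^n\geq 0$, so nonnegativity propagates.

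To upgrade to strict positivity I use irreducibility: the cell-adjacency graph of the Voronoi tessellation covering the (connected) domain is connected, so $A$ is irreducible, and an irreducible nonsingular M-matrix has a strictly positive inverse $A^{-1}>0$. Because the initial datum is a probability distribution, Lemma~\ref{L1} — which applies since the potential form $d_{KL}q_{KL}^{n}=V_K^n-V_L^n$ forces the antisymmetry $q_{KL}^n=-q_{LK}^n$ — keeps the mass $\sum_K|K|\rho_K^n$ equal to $1$, so $b\neq 0$ at every step. Then each component $\rho_K^{n+1}=\sum_L(A^{-1})_{KL}b_L>0$, and induction on $n$ yields $\rho_K^n>0$ for all $K$ and all $n\geq 1$.

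The main obstacle is precisely the passage from nonnegativity to strict positivity: it is here that the connectedness of the mesh (irreducibility) and the non-vanishing of the total mass are essential. I expect the advective part of the flux — whose coefficient $\sum_{L\sim K}|K\edge L|\,q_{KL}^{n+1}$ has indefinite sign and would spoil any naive row-based discrete maximum principle — to cause no trouble precisely because it drops out of the \emph{column} sums rather than the row sums, so that the M-matrix argument sidesteps it entirely. A minor point to state carefully is that freezing $q^{n+1}$ (which itself depends on $\rho^{n+1}$ through~\eqref{e:def:q}) introduces no circularity, since for the given solution these are fixed numbers and the identity $\rho^{n+1}=A^{-1}b$ holds for that frozen matrix.
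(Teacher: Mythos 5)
Your proof is correct, but it follows a genuinely different route from the paper. The paper exploits the potential form $d_{KL}q_{KL}^n=V_K^n-V_L^n$ to change variables to $h_K^n=\rho_K^n e^{\kappa^{-1}V_K^n}$, which turns the scheme into a symmetric elliptic form \eqref{e:scheme:symmetric} with logarithmic-mean weights; nonnegativity then follows from a discrete maximum principle (testing at a negative minimizer), and strict positivity from a propagation-of-zeros argument along the connected adjacency graph. You instead freeze the solution, read the update as a linear system $A\rho^{n+1}=b$, and observe that $A$ is a Z-matrix whose \emph{column} sums equal $|M|>0$ because the flux antisymmetry $F_{KL}=-F_{LK}$ makes the edge contributions cancel — precisely the mass-conservation mechanism of Lemma~\ref{L1} — so $A$ is a strictly column-diagonally-dominant, hence nonsingular, M-matrix, irreducible by connectivity of the mesh, whence $A^{-1}>0$ and positivity propagates. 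What your approach buys: it never uses the potential structure of $q$, only $B_\kappa>0$ and the symmetry $\tau_{KL}=\tau_{LK}$, so it proves a slightly more general statement; and your observation that the sign-indefinite advective part is invisible to column sums (though it would ruin row-based dominance) is exactly the right structural point. What the paper's approach buys: the symmetrized form \eqref{e:scheme:symmetric} is reused almost verbatim in the proof of Theorem~\ref{thm:stat}, so the change of variables is not just a proof device. Two minor points to tidy: your parenthetical appeal to Theorem~\ref{thm:well-posed}(i) for existence should be dropped (the paper proves that theorem \emph{using} this lemma; in any case the lemma's hypothesis already supplies the solution, so nothing is lost); and, like the paper's own proof, your conclusion of strict positivity tacitly requires $\rho_h^0\not\equiv 0$ — you supply this via the normalization $\sum_K|K|\rho_K^0=1$, which is the intended setting, but it is worth saying explicitly since the lemma's hypothesis only states nonnegativity.
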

\begin{proof}[Proof of Lemma~\ref{L2}]
 For the proof, it is convenient to consider the transformed quantity  $h_K^n = \rho_K^n e^{\kappa^{-1} V_K^n}$ instead of $\rho_K^n$. A short computation reveals that $h^n_K$ solves the iteration scheme
 \begin{align}
 |K| \frac{h_K^{n+1}e^{-\kappa^{-1} V_K^{n+1}} -h_K^n e^{-\kappa^{-1} V_K^n}}{\dt} &=\sum_{L\sim K} \tau_{KL} \frac{V_{K}^{n+1}-V_L^{n+1}}{e^{\frac1{\kappa}V_K^{n+1} }-e^{\frac1{\kappa}V_{L^{n+1}}}} \left(h_L^{n+1}-h_K^{n+1}\right) \nonumber \\
  & = \kappa \sum_{L\sim K} \tau_{KL} \frac{ h_L^{n+1}-h_K^{n+1}}{\Lambda\bra*{e^{\frac1{\kappa} V_K^{n+1}},e^{\frac1{\kappa}  V_L^{n+1}}}} ,\label{e:scheme:symmetric}
 \end{align}
where $\Lambda:\R_+\times \R_+\to \R_+$ is the logarithmic mean given by
  \begin{equation}
   \Lambda(a,b)=\begin{cases}
        \frac{a-b}{\log a- \log b} , & a,b> 0 ,\\
        a , & a=b   .                                                                                                                            
                 \end{cases}
  \end{equation}
 We observe that  \eqref{e:scheme:symmetric} is the implicit finite volume scheme for an elliptic linear operator  and hence satisfying the maximum principle, that is, if $h_K^{n}\geq 0$ for all $K\in \cT^h$, we immediately get $h_K^{n+1}\geq 0$ for all $K$. Indeed, suppose that for some $n$ and $J\in \cT^h$, it holds $h_{J}^{n+1}= \min_{K\in \cT^h} h_{K}^{n+1}<0$ and $h_{K}^n \geq 0$ for all $K\in \cT^h$, then we have by \eqref{e:scheme:symmetric},
 \begin{align*}
0>  |J| \frac{h_J^{n+1}e^{-\kappa^{-1} V_J^{n+1}} -h_J^n e^{-\kappa^{-1} V_J^n}}{\dt} =\kappa \sum_{L\sim J} \frac{\tau_{JL}}{\Lambda\bra*{e^{\frac1{\kappa} V_J^{n+1}},e^{\frac1{\kappa}  V_L^{n+1}}}}  \bra*{h_L^{n+1}-h_{J}^{n+1}}  \geq 0 ,
 \end{align*}
which is a contradiction.
 
To obtain the positivity, let $h_{L}^{n}\geq 0$ for all $L\in \cT^h$ and $h_{L^*}^{n}>0$ for some $L^*\in \cT^h$. Moreover, towards a contradiction, we assume that $h_{K^*}^{n+1}=0$ for some $K^*\in \cT^h$. By using $K=K^*$ in~\eqref{e:scheme:symmetric}, we immediately see that
 \[
 \frac{|K|}{\dt}h_{K^*}^{n} e^{-\frac{1}{\kappa} V_{K^*}^{n+1}} + \sum_{L\sim K^*} \tau_{KL} \frac{h_L^{n+1}}{\Lambda(e^{\frac1{\kappa}V_{K^*}^{n+1}},e^{\frac1\kappa V_L^{n+1}})} = 0.
 \]
 By the nonnegativity of $\{h_K^n\}_n$, this implies that $h^{n+1}_L=0$ for any $L\sim K^*$ because the denominators above are finite. This argument can be reapplied to any $K\sim K^*$, and  by iteration we arrive at $h_{K}^{n+1}=0$ for any $K\in \cT^h$. Inserting  this information back into \eqref{e:scheme:symmetric}, we deduce that $h_{K}^{n}=0$ for any $K\in \cT^h$, which contradicts the hypothesis that $h_{L^*}^n>0$.
\end{proof}
It remains to show that the scheme is well-defined. 
\begin{proof}[Proof of Theorem~\ref{thm:well-posed}(i)] Existence of the finite volume approximation is obtained via a standard topological degree argument. We provide the details for the convenience of the reader. We introduce the homotopy parameter $\mu\in[0,1]$ and consider the continuous mapping $\Xi = \{\Xi_K\}_{K}$ given by
\[
\Xi_K(\varphi,\mu)=
|K| \frac{\varphi_K - \rho^n_K}{\dt} + \mu \sum_{L\sim K} F_{KL}[\varphi,\rho^n] .
\]
We first observe that any solution $\varphi= \rho_{\mu}^{n+1}$  to the equation $\Xi[\varphi,\mu]=0$ is nonnegative and mass-preserving by Lemmas \ref{L1} and \ref{L2}, and thus
\begin{equation}\label{600}
0 \le \rho_{\mu,K}^{n+1} \le \frac{\|\rho^{n+1}_{\mu}\|_{L^1(\hat \Omega)}}{|K|} \le \frac{\|\rho^0\|_{L^1(\hat \Omega)}}{\min_{K\in\cT}|K|} =:m.
\end{equation}
In particular, $\rho_{\mu}^{n+1}\not\in \partial  M$, where $M = [-1, 1+m]^{\# \cT}$. It follows that the degree $\deg(\Xi(\cdot,\mu),M,0)$ is well-defined, and by homotopy invariance, it holds that
\[
\deg(\Xi(\cdot, \mu),M,0) = \deg(\Xi(\cdot ,0),M,0) = \deg(\Xi(\cdot ,1),M,0).
\]
Clearly, $\Xi(\cdot,0)$ is an affine map, which has degree $1$, and thus, in particular,  $\Xi(\cdot ,1)$ has a non-zero degree. Hence, $\Xi(\varphi ,1)=0$ has a solution $\varphi= \rho_{1}^{n+1}$, which satisfies \eqref{e:time:flux} by definition.
\end{proof}

\subsection{Free energy dissipation principle and consequences}\label{SS:free_energy}
\begin{proof}[Proof of Proposition~\ref{thm:FED}]
By the result of Lemma~\ref{L2}, we have that $\rho^{n+1}_K >0$ for all $K\in \cT^h$, hence all occurrences of $\log \rho^{n+1}_K$ are well-defined. We start with the entropy and use the convention that $0\cdot \log 0 = 0 = 0 \cdot \log \infty$ to arrive at
\begin{equation}\label{full_entropy_dissipation}
  \frac{\cS^h(\rho^{n+1})-\cS^h(\rho^n)}{\dt} = \sum_{K} \log\bra*{\rho_K^{n+1}}  \abs{K} \frac{\rho_K^{n+1} - \rho_K^n}{\dt} +\frac{1}{\dt} \sum_{K}\abs{K} \rho_K^n \log \frac{\rho_K^{n+1}}{\rho_K^n} \:.
\end{equation}
We proceed by recalling the definition of relative entropy from~\eqref{e:def:RelEnt} and  use the scheme~\eqref{e:time:flux} with identity \eqref{e:def:uniflux} and a symmetrization
\begin{align}
 \MoveEqLeft{\frac{\cS^h(\rho^{n+1})-\cS^h(\rho^n) + \cH(\rho^n\mid \rho^{n+1})}{\dt}= -\sum_{K}\sum_{L\sim K} \log \rho_K^{n+1} F_{KL}^{n+1}}\\
 &= -\frac{1}{2} \sum_{K} \sum_{L\sim K} \abs{K\edge L}\, \frac{\log \rho_K^{n+1} - \log \rho_L^{n+1}}{d_{KL}} \, \bra*{ j_{KL}^{n+1} - j_{LK}^{n+1}} .
\end{align}
Now, we turn to the discrete interaction energy~\eqref{e:energy:numerical}. Here, we use the following discrete product rule for $K,J\in \cT^h$
\begin{equation}
  \rho_K^{n+1} \rho_J^{n+1} - \rho_K^n \rho_J^n = \bra*{ \rho_K^{n+1} - \rho_K^n} \frac{\rho_J^{n+1} + \rho_J^n}{2} + \bra*{ \rho_J^{n+1} - \rho_J^n} \frac{\rho_K^{n+1} + \rho_K^n}{2} \:,
\end{equation}
and the symmetry of $W$ in \eqref{A0} to write
\[
\frac{\cE^h(\rho^{n+1})-\cE^h(\rho^n)}{\dt} = \sum_{K,J} W(x_J - x_K) \abs{J}\frac{\rho_J^{n+1}+\rho_J^n}{2} \abs{K} \frac{\rho_K^{n+1} - \rho_K^n}{\dt} .
\]
Applying the scheme \eqref{e:time:flux} with \eqref{e:def:uniflux} and another symmetrization, we arrive at
\begin{align}
  \MoveEqLeft{\frac{\cE^h(\rho^{n+1})-\cE^h(\rho^n)}{\dt}   }\\
 &= -\sum_{J,K} \sum_{L\sim K} W(x_J - x_K) \abs{J}\frac{\rho_J^{n+1}+\rho_J^n}{2} \frac{\abs{K\edge L}}{d_{KL}} \bra*{ j_{KL}^{n+1} - j_{LK}^{n+1}}\\
 &= -\frac{1}{2} \sum_{K} \sum_{L\sim K} \abs{K\edge L} \bra*{ j_{KL}^{n+1} - j_{LK}^{n+1}} \sum_J \abs{J}\frac{\rho_J^{n+1}+\rho_J^n}{2} \frac{W(x_J -x_K) - W(x_J -x_L)}{d_{KL}} \\
 &= - \frac{1}{2}\sum_{K} \sum_{L\sim K}  \abs{K\edge L}  \, q_{KL}^{n+1}  \, \bra*{ j_{KL}^{n+1} - j_{LK}^{n+1}} \:,
\end{align}
by definition~\eqref{e:def:q}.

Finally, we combine both calculations and invoke  \eqref{e:flux:identity} to prove~\eqref{e:FED:numerical},
\begin{align}
  \MoveEqLeft{\frac{\cF^h(\rho^{n+1})-\cF^h(\rho^n)+ \cH(\rho^n\mid \rho^{n+1})}{\dt}} \\
  &= - \frac{1}{2}\sum_{K} \sum_{L\sim K}  \abs{K\edge L}  \,\frac{\kappa \log \rho_K^{n+1} - \kappa \log \rho_L^{n+1} + d_{KL} q_{KL}^{n+1}}{d_{KL}}  \, \bra*{ j_{KL}^{n+1} - j_{LK}^{n+1}} \\
  &=- \frac{1}{2}\sum_{K} \sum_{L\sim K} \kappa \abs{K\edge L}  \, \frac{\log \bra*{ \rho_K^{n+1} e^{\frac{d_{KL} q_{KL}^{n+1}}{2\kappa}}}-\log\bra*{ \rho_L^{n+1} e^{-\frac{d_{KL} q_{KL}^{n+1}}{2\kappa}}}}{d_{KL}} \times \\
  &\qquad\qquad\qquad\qquad \times d_{KL}q_{KL}^{n+1} \frac{\rho_K^{n+1} e^{\frac{d_{KL} q_{KL}^{n+1}}{2\kappa}}-\rho_L^{n+1} e^{-\frac{d_{KL} q_{KL}^{n+1}}{2\kappa}}}{ e^{\frac{d_{KL} q_{KL}^{n+1}}{2\kappa}} -e^{-\frac{d_{KL} q_{KL}^{n+1}}{2\kappa}}} . 
\end{align}
This concludes the proof of Proposition \ref{thm:FED}.
\end{proof}
In the following, our goal is to derive gradient estimates from the free energy dissipation principle. For this, we assume the following smallness condition on the mesh size,
\begin{equation}
\label{e:h_small}
h \le \frac{\kappa}{\Lip(W)},
\end{equation}
which is also needed in order to establish stability, see \eqref{smallness_condition}.
\begin{lemma}[Gradient estimates]
\label{prop:grad_est}
The following estimates hold true for any $N\in \N$,
\begin{align}
\kappa^2 \dt \sum_{n=0}^N \sum_K \sum_{L\sim K}\frac{|K\edge L|}{d_{KL}} \Bigl(\sqrt{\rho_K^{n+1}}-\sqrt{\rho_L^{n+1}}\Bigr)^2  &\lesssim \F^h(\rho^0) + T\Lip(W)^2,\label{Fisher_information}\\
\label{spatial_gradient}
\kappa^2 \dt \sum_{n=0}^N   \biggl( \sum_K \sum_{L\sim K}|K\edge L|\,| \rho_K^{n+1}-\rho_L^{n+1}|\biggr)^2 &\lesssim \F^h(\rho^0) + T\Lip(W)^2,\\
 \label{time_derivative}
\kappa \sum_{n=0}^N \biggl(\sum_K |K| \, |\rho_K^{n+1}-\rho_K^n|\biggr)^2 & \le \F^h(\rho^0).
\end{align}
\end{lemma}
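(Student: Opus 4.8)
The plan is to derive all three estimates from the free energy dissipation relation \eqref{e:FED:numerical} by summing it in time. Multiplying \eqref{e:FED:numerical} by $\dt$ and summing over $n=0,\dots,N$ telescopes the free energy and yields
\[
\dt\sum_{n=0}^N \cD^h(\rho^{n+1}) + \kappa\sum_{n=0}^N \cH(\rho^n\mid\rho^{n+1}) = \cF^h(\rho^0) - \cF^h(\rho^{N+1}).
\]
Since $W\ge 0$ by \eqref{A0} the interaction energy is nonnegative, and since $s\mapsto s\log s\ge -1/e$ the entropy $\cS^h$ is bounded below; hence $\cF^h$ is bounded below and the right-hand side is controlled by $\cF^h(\rho^0)$ up to a constant. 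As both terms on the left are nonnegative, I obtain separately $\dt\sum_n\cD^h(\rho^{n+1})\lesssim\cF^h(\rho^0)$, which drives the two spatial estimates, and $\kappa\sum_n\cH(\rho^n\mid\rho^{n+1})\lesssim\cF^h(\rho^0)$, which drives the temporal estimate.

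The time-derivative bound \eqref{time_derivative} is then immediate: by the Csisz\'ar--Kullback--Pinsker inequality applied with reference measure $\rho^{n+1}$ one has $\bra*{\sum_K\abs{K}\,\abs{\rho_K^{n+1}-\rho_K^n}}^2\le 2\,\cH(\rho^n\mid\rho^{n+1})$ for each $n$; multiplying by $\kappa$, summing, and invoking the relative-entropy bound above closes the estimate.

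The Fisher-information estimate \eqref{Fisher_information} is the heart of the matter and reduces to a pointwise lower bound on the summand $\alpha_\kappa$ of $\cD^h$. Writing $\mu=\log\tfrac{a}{b}+\tfrac{v}{\kappa}$ one checks the representation $\alpha_\kappa(a,b;v)=\sqrt{ab}\,\tfrac{v}{\sinh(v/2\kappa)}\,\mu\sinh(\tfrac{\mu}{2})$. First I control the intercellular velocity: by \eqref{e:def:q}, the Lipschitz bound \eqref{A3}, and mass conservation, $\abs{q_{KL}^{n+1}}\le\Lip(W)$, so $\abs{v}=d_{KL}\abs{q_{KL}^{n+1}}\le h\,\Lip(W)\le\kappa$ under \eqref{e:h_small}. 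On this range $\tfrac{v}{\sinh(v/2\kappa)}\gtrsim\kappa$, and the elementary inequality $\mu\sinh(\tfrac\mu2)\ge 8\sinh^2(\tfrac\mu4)$ (coming from $\tanh\le\mathrm{id}$) gives $\alpha_\kappa\gtrsim\kappa\sqrt{ab}\sinh^2(\tfrac\mu4)$. Since $(\sqrt a-\sqrt b)^2=4\sqrt{ab}\sinh^2(\tfrac L4)$ with $L=\log\tfrac ab=\mu-\tfrac v\kappa$, the hyperbolic addition formula together with $\abs{v}\le\kappa$ yields $\sinh^2(\tfrac L4)\lesssim\sinh^2(\tfrac\mu4)+\tfrac{v^2}{\kappa^2}\cosh^2(\tfrac\mu4)$, while $\sqrt{ab}\cosh^2(\tfrac\mu4)\lesssim a+b$. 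Altogether $\kappa^2(\sqrt a-\sqrt b)^2\lesssim\kappa\,\alpha_\kappa+v^2(a+b)$. Weighting by $\tfrac{\abs{K\edge L}}{d_{KL}}$ and summing, the first term reproduces $\cD^h(\rho^{n+1})$, while the remainder, using $v=d_{KL}q_{KL}^{n+1}$, the Voronoi cone identity $\sum_{L\sim K}\abs{K\edge L}d_{KL}\le 2d\abs{K}$, and $\sum_K\abs K\rho_K=1$, is bounded by $\Lip(W)^2\sum_{K,L}\abs{K\edge L}d_{KL}(\rho_K+\rho_L)\lesssim\Lip(W)^2$. The telescoped bound and $(N+1)\dt\approx T$ then give \eqref{Fisher_information}. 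I expect this pointwise step to be the main obstacle: the correction must be kept \emph{quadratic} in $v$, since a naive first-order bound would contribute $\Lip(W)/h$ and blow up as $h\to0$.

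Finally, \eqref{spatial_gradient} follows from \eqref{Fisher_information} by Cauchy--Schwarz. Factoring $\abs{\rho_K-\rho_L}=\abs{\sqrt{\rho_K}-\sqrt{\rho_L}}\,(\sqrt{\rho_K}+\sqrt{\rho_L})$ and splitting the weight $\abs{K\edge L}$ into $\tfrac{\abs{K\edge L}^{1/2}}{d_{KL}^{1/2}}$ and $\abs{K\edge L}^{1/2}d_{KL}^{1/2}$ gives $\bra*{\sum_{K,L}\abs{K\edge L}\abs{\rho_K-\rho_L}}^2\lesssim\bra*{\sum_{K,L}\tfrac{\abs{K\edge L}}{d_{KL}}(\sqrt{\rho_K}-\sqrt{\rho_L})^2}\bra*{\sum_{K,L}\abs{K\edge L}d_{KL}(\rho_K+\rho_L)}$, and the last factor is again $\lesssim 1$ by the cone identity and mass conservation. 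Multiplying by $\kappa^2\dt$, summing over $n$, and inserting \eqref{Fisher_information} closes the proof.
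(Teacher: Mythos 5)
Your proof is correct, and its skeleton is the same as the paper's: telescope the dissipation relation \eqref{e:FED:numerical}, bound the integrand $\alpha_\kappa(a,b;v)$ from below by a Fisher-information term minus a correction that is \emph{quadratic} in $v$, absorb that correction via $|v|=d_{KL}|q_{KL}^{n+1}|\le h\Lip(W)\le\kappa$ and mesh regularity, then get \eqref{spatial_gradient} by Cauchy--Schwarz and \eqref{time_derivative} by Pinsker. Where you genuinely diverge is the execution of the central pointwise bound. The paper first invokes the log-concavity inequality $(\log a-\log b)(a-b)(a+b)\ge 2(a-b)^2$ to reduce $\alpha_\kappa$ to the quotient $\phi(z)=(ae^{z}-be^{-z})^2/(ae^{z}+be^{-z})$ and then Taylor-expands $\phi$ around $z=0$ with explicit control of $\phi''$ on $[-1,1]$; you instead use the exact factorization $\alpha_\kappa=\sqrt{ab}\,\tfrac{v}{\sinh(v/2\kappa)}\,\mu\sinh(\mu/2)$ with $\mu=\log\tfrac{a}{b}+\tfrac{v}{\kappa}$ together with $\tanh\le\mathrm{id}$ and the hyperbolic addition formula, arriving at $\kappa^2(\sqrt a-\sqrt b)^2\lesssim\kappa\,\alpha_\kappa+v^2(a+b)$ directly in terms of square roots (the paper works with $(a-b)^2/(a+b)$ and converts to roots only at the end; the two are interchangeable since $(\sqrt a-\sqrt b)^2\le (a-b)^2/(a+b)\le 2(\sqrt a-\sqrt b)^2$). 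Your route is algebraically cleaner and avoids estimating second derivatives. A second, minor difference: you control $\sum_{L\sim K}|K\edge L|\,d_{KL}$ by the exact Voronoi diamond identity (cf.\ \eqref{300}), which gives $2d|K|$, whereas the paper uses the isoperimetric hypothesis \eqref{3} together with $d_{KL}\le 2h$ to get $2C_{\iso}|K|$; both are valid, but the paper's bookkeeping is what produces the explicit constants in \eqref{fine_gradient_estimate} that are later needed for the stability estimate, so a quantitative version of your argument would have to be tracked separately if used there. Your observation that the correction term must be kept quadratic in $v$ is exactly the right diagnosis of where the argument could fail.
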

The reader who is familiar with the entropy theory for the heat equation will identify our first estimate \eqref{Fisher_information} as a discrete version of the entropy bound on the  Fisher information, which is a consequence of the dissipation estimate \eqref{e:FED}. On the  discrete level, these estimates rely on Proposition \ref{thm:FED}.

We furthermore remark that the gradient estimates \eqref{Fisher_information} and  \eqref{spatial_gradient}  make use of the smallness condition in \eqref{e:h_small}. We recall that this is not a condition for existence or positivity. Moreover, for very small diffusivities for which \eqref{e:h_small} is violated, a variation of the analysis below would yield the weak BV estimate
\[
 \sum_K\sum_{L\sim K}| K\edge L| \, |q^{n+1}_{KL}| \, |\rho_K^{n+1}-\rho_L^{n+1}| \le  \sum_K \sum_{L\sim K} |K\edge L| \bigl(\rho^{n+1}_K (q^{n+1}_{KL})^+ + \rho_L^{n+1}(q_{KL}^{n+1})^-\bigr)\lesssim 1,
\]
which is known for the upwind scheme \cite{SchlichtingSeis18}. We will make use of the gradient estimate in~\eqref{spatial_gradient} in our derivation of stability estimates. More precisely, we will use for all $n$ the bound
\begin{equation}
\label{fine_gradient_estimate}
\left(\sum_K \sum_{L\sim K} |K\edge L| |\rho_K^{n+1} - \rho_L^{n+1}|\right)^2 \le 5\frac{ C_{\iso} \cF^h(\rho^0)}{\dt \, \kappa^2} +  32 \frac{C_{\iso}^2 \Lip(W)^2}{\kappa^2} ,
\end{equation}
which can be established by carefully tracking the constants in the following proof, see in particular~\eqref{e:gradient:quantitative} and~\eqref{e:gradient:quantitaive2}. Of course, we do not claim that the bound above is sharp. Yet, it eventually gives an explicit estimate on the time step size that is needed in order to establish our stability estimate.
\begin{proof}
We start by noticing that, because $(\log(a)-\log(b))(a-b)(a+b) \ge 2(a-b)^2 $ for any positive $a,b>0$ by the concavity of the logarithm, we have 
\[
\alpha_{\kappa}(a,b,v) \ge \frac{(ae^{\frac{v}{2\kappa}} - be^{-\frac{v}{2\kappa}})^2}{a e^{\frac{v}{2\kappa}} + be^{- \frac{v}{2\kappa}}}\frac{v}{e^{\frac{v}{2\kappa}}-e^{-\frac{v}{2\kappa}}}.
\]
For notational simplicity, we write $z = \frac{v}{2\kappa}$ and suppose that $|z|\le 1$ in the following, which we will justify later. We furthermore set
\[
\phi(z) = \frac{(a e^z - be^{-z})^2}{a e^z + be^{-z}},
\]
and notice that by Taylor expansion and the elementary inequality $a\,b  \le \eps a^2 + \frac1{4\eps}b^2$, for any $a,b,\eps >0$, it holds
\begin{align*}
\phi(z) & \ge \phi(0) + \phi'(0)z -\frac12 \max_{y\in [-1,1]} |\phi''(y)| z^2\\
& \ge \frac{(a-b)^2}{a+b} +  2(a-b)z -\frac{(a-b)^3}{(a+b)^2}z - C_1(a+b)z^2\\
&\ge \frac12\frac{(a-b)^2}{a+b} - C_2(a+b) z^2,
\end{align*}
with constants $C_1=4e$ and $C_2=4e+5\leq 16$.

For $v = d_{KL}q_{KL}^{n+1}$ and our smallness assumption \eqref{e:h_small}, we indeed have that $|z|\le 1$. Truly, in view of the definition of $q^{n+1}_{KL}$ in \eqref{e:def:q} and because the approximate solution is a probability measure, it holds that $|z|  = {d_{KL}|q_{KL}^{n+1}|}/{2\kappa}  \le \frac{h}{\kappa}\Lip(W)\le 1$. 
Since $|e^z-e^{-z}|\leq (e-1/e) |z|$ for $|z|\le1$. In total, we obtain the lower bound
\[
 \kappa \alpha_{\kappa}(a,b,v) = \kappa\phi\bra*{\frac{v}{2\kappa}} \frac{2\kappa \frac{v}{2\kappa}}{e^{\frac{v}{2\kappa}}-e^{-\frac{v}{2\kappa}}} \geq \frac{\kappa^2}{e-1/e}\frac{(a-b)^2}{a+b} - \frac{8}{e-1/e} (a+b)v^2  . 
\]
We thus infer from Proposition \ref{thm:FED} and the nonnegativity of the relative entropy $\cH (\rho^{n+1},\rho^n)$, see the discussion below, that
\begin{align*}
\frac{\F^h(\rho^{n+1}) - \F^h(\rho^n)}{\dt} &+ \frac{\kappa^2}{e-1/e} \sum_K \sum_{L\sim K} \frac{ |K\edge L|}{d_{KL}} \frac{(\rho_K^{n+1} - \rho_L^{n+1})^2}{\rho_K^{n+1} + \rho_L^{n+1}}\\ &\leq  \frac{8}{e-1/e}\sum_K \sum_{L\sim K}\frac{|K\edge L|}{d_{KL}} (\rho_K^{n+1} + \rho_L^{n+1}) \left| d_{KL} q^{n+1}_{KL} \right|^2.
\end{align*}
Notice that the term on the right-hand side without prefactor is bounded by
\[
\Lip(W)^2 \sum_K \sum_{L\sim K} d_{KL} |K\edge L| (\rho_K^{n+1} + \rho_L^{n+1}) \leq 2 C_{\iso} \Lip(W)^2 \|\rho^{n+1}\|_{L^1(\Omega)}  = 2 C_{\iso} \Lip(W)^2,
\]
where we have used  relabeling arguments, the isoperimetric property \eqref{3}, and a similar reasoning as above in order to bound $q_{KL}^{n+1}$. Hence, summing over $n$ and multiplying by~$\dt$ and $(e-1/e)\leq 2.5$ gives 
\begin{equation}\label{e:gradient:quantitative}
\kappa^2 \dt \sum_K \sum_{L\sim K}\frac{|K\edge L|}{d_{KL}}\frac{(\rho_K^{n+1}-\rho_L^{n+1})^2}{\rho_K^{n+1} + \rho_L^{n+1}} \leq 2.5	\F^h(\rho_0) + 16 C_{\iso} \dt \Lip(W)^2.
\end{equation}
It remains to apply the elementary inequality $(\sqrt{a} - \sqrt{b})^2 \le \frac{(a-b)^2}{a+b}$ and to sum in $n$ to deduce \eqref{Fisher_information}.

In order to prove \eqref{spatial_gradient}, we have to use the Cauchy--Schwarz inequality,
\begin{align}
\MoveEqLeft \kappa^2 \dt \biggl(\sum_K\sum_{L\sim K} |K\edge L| \, |\rho_K^{n+1}-\rho_L^{n+1}|\biggr)^2 \nonumber \\
&  \le \kappa^2 \dt \biggl(\sum_K \sum_{L\sim K} \frac{|K\edge L|}{d_{KL}} \frac{(\rho_K^{n+1}-\rho_L^{n+1})^2}{\rho_K^{n+1}+\rho_L^{n+1}}\biggr) \biggl(\sum_K\sum_{L\sim K} |K\edge L|d_{KL} (\rho_K^{n+1}+\rho_L^{n+1})\biggr) \nonumber \\
&\leq 2 C_{\iso}  \kappa^2 \dt \biggl(\sum_K \sum_{L\sim K} \frac{|K\edge L|}{d_{KL}} \frac{(\rho_K^{n+1}-\rho_L^{n+1})^2}{\rho_K^{n+1}+\rho_L^{n+1}}\biggr) \label{e:gradient:quantitaive2}
\end{align}
where we notice that the second term is again bounded by $2 C_{\iso}$.
After a summation in~$n$, this proves \eqref{spatial_gradient}.

The estimate of the time derivative in \eqref{time_derivative} is an immediate consequence of the entropy dissipation \eqref{e:FED:numerical} in Proposition \ref{thm:FED}. Indeed,  dropping the  dissipation  term on the right-hand side of \eqref{e:FED:numerical}, summing over $n$ and using the  Pinsker inequality yields
\[
\frac{\kappa}2 \sum_{n=0}^N \biggl(\sum_K |K| \, | \rho^{n+1}_K - \rho_K^n|\biggr)^2 \le \kappa  \sum_{n=0}^N \cH(\rho^{n+1},\rho^n) \le \cF^h(\rho^0).
\]
This concludes the proof of Lemma \ref{prop:grad_est}.
\end{proof}
The previous lemma provides bounds on gradients in terms of the discrete free energy of the initial datum $\rho^0=\rho_h^0$. Our next lemma ensures that this quantity is indeed bounded for finite entropy initial data $\rho_0$. We will make use of this observation in the convergence analysis in the subsection that follows. Moreover, this bound makes sure that the right-hand side of \eqref{fine_gradient_estimate} is bounded uniformly in $h$ for $h$ sufficiently small.
\begin{lemma}[Initial energy]
It holds that
\begin{equation}
\label{ini_dat}
\F^h(\rho^0_h) \le \F(\rho_0) + h\Lip(W).
\end{equation}
\end{lemma}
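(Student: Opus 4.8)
The plan is to split the discrete free energy into its entropic and interaction parts, $\F^h(\rho^0_h) = \kappa\,\cS^h(\rho^0_h) + \cE^h(\rho^0_h)$, and to compare each piece separately against its continuous counterpart in~\eqref{e:free_energy}. The decisive structural fact is that the discrete data are exact cell averages, $\rho_K^0 = \avint_K \rho_0\dx{x}$ and hence $|K|\rho_K^0 = \int_K \rho_0\dx{x}$, which lets me lift both discrete sums to integrals over cells of the zero-extended datum $\rho_0$.

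For the entropic part I would invoke the convexity of $t\mapsto t\log t$. Jensen's inequality applied on each cell $K$ gives $\rho_K^0 \log \rho_K^0 = \bigl(\avint_K \rho_0\bigr)\log\bigl(\avint_K \rho_0\bigr) \le \avint_K \rho_0\log\rho_0 \dx{x}$; multiplying by $|K|$ and summing over $K$ then yields
\[
\cS^h(\rho^0_h) = \sum_K |K|\,\rho_K^0 \log \rho_K^0 \le \sum_K \int_K \rho_0\log\rho_0\dx{x} = \int_\Omega \rho_0\log\rho_0\dx{x},
\]
where the last equality uses the convention $0\log 0 = 0$ on the region where the zero-extension vanishes, so that the integral over $\hat\Omega$ collapses to $\Omega$. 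Thus the entropy is controlled \emph{without} any error term.

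For the interaction energy I would write both $\cE^h(\rho^0_h)$ and the continuous energy $\tfrac12\iint W(x-y)\rho_0(x)\rho_0(y)$ as the same double sum of integrals over pairs of cells, using $|K|\rho_K^0 = \int_K \rho_0$. Their difference is
\[
\cE^h(\rho^0_h) - \tfrac12\iint_{\Omega\times\Omega} W(x-y)\rho_0(x)\rho_0(y)\dx{x}\dx{y}
= \tfrac12 \sum_{K,L} \int_K \int_L \bigl[W(x_K - x_L) - W(x-y)\bigr]\rho_0(x)\rho_0(y)\dx{y}\dx{x}.
\]
For $x\in K$ and $y\in L$, the Lipschitz assumption~\eqref{A3} and the triangle inequality give $|W(x_K-x_L)-W(x-y)| \le \Lip(W)\bigl(|x_K-x| + |x_L-y|\bigr) \le 2h\,\Lip(W)$, where I use that each Voronoi generator lies in its own cell, so $|x_K-x|\le \diam(K)\le h$ by~\eqref{e:diam:h}. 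Since $\rho_0$ integrates to one, the double sum of $\int_K\int_L \rho_0(x)\rho_0(y)$ equals $1$, and the prefactor $\tfrac12$ absorbs the factor $2$, leaving an interaction-energy error of at most $h\,\Lip(W)$. Adding the two estimates gives~\eqref{ini_dat}.

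The argument is essentially bookkeeping, so I expect no deep obstacle; the only points demanding care are (i) the correct handling of the $0\log 0$ convention together with the zero-extension, so that the entropy integral genuinely reduces to an integral over $\Omega$, and (ii) tracking the constant in the Lipschitz estimate so that the cancellation of the prefactor $\tfrac12$ against $2h$ produces exactly $h\,\Lip(W)$ rather than $2h\,\Lip(W)$.
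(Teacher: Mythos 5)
Your proposal is correct and follows essentially the same route as the paper: Jensen's inequality with the convexity of $z\mapsto z\log z$ for the entropic part (giving no error term), and the Lipschitz bound $|W(x_K-x_L)-W(x-y)|\le \Lip(W)(|x_K-x|+|x_L-y|)\le 2h\Lip(W)$ for the interaction part, with the prefactor $\tfrac12$ absorbing the factor $2$. The extra care you take with the $0\log0$ convention and the zero-extension is consistent with (and slightly more explicit than) the paper's argument.
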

\begin{proof}
From the convexity of the function $\varphi(z) =  z\log z$, we obtain via Jensen's inequality that
\[
\cS^h(\rho_h^0)  =\sum_K |K| \varphi\biggl(\frac1{|K|} \int_K \rho_0\dx{x}\biggr)  \le \sum_K \int_K \varphi(\rho_0)\dx{x} = \int_{\Omega} \varphi(\rho_0)\dx{x} =  \cS(\rho_0).
\]
Moreover, using the definitions of the interaction energy and the Lipschitz property of the potential $W$, 
\begin{align*}
\cE^h(\rho_h^0) &  =\frac12 \sum_{K,L} \int_K\int_L W(x_K-x_L) \rho_0(x)\rho_0(y)\dx{x}\dx{y}\\
&\le \frac12 \int_{\Omega}\int_{\Omega} W(x-y)\rho_0(x)\rho_0(y)\dx{x}\dx{y} + h\Lip(W) \sum_{K,L} \int_K\rho_0\dx{x}\int_L\rho_0\dx{y}\\
& = \cE(\rho_0) + h\Lip(W).
\end{align*}
A combination of both estimates yields the statement of the lemma.
\end{proof}
We are now in the position to show the stability estimate of Theorem \ref{thm:well-posed}.
\begin{proof}[Proof of Theorem \ref{thm:well-posed}(ii)]
We consider the function $\sigma(w) = w(\coth(w)+1)$ and observe that
\[
\theta_{\kappa}(a,b,v) = \kappa \, \sigma\bra*{\frac{v}{2\kappa}}a + \kappa\, \sigma\bra*{-\frac{v}{2\kappa}}b.
\]
Given positive numbers $a,\tilde a,b,\tilde b$ and reals $w=v/2\kappa$ and $\tilde w = \tilde v/2\kappa$, we have 
\begin{align*}
\kappa^{-1}\theta_{\kappa}(a,b,v) - \kappa^{-1}\theta_{\kappa}(\tilde a,\tilde b, \tilde v) & = \sigma(w)a -\sigma(\tilde w)\tilde a + \sigma(-w) b - \sigma(-\tilde w)\tilde b\\
&  = \sigma(w)(a-\tilde a)  +\sigma(-w)(b-\tilde b)\\
&\qquad +(\tilde a - \tilde b)(\sigma( w)-\sigma(\tilde w))\\
&\qquad +\tilde b \left((\sigma( w)+\sigma( -w)) - (\sigma(\tilde w)+\sigma(-\tilde w))\right).
\end{align*}
Now, if $\rho^{n+1}$ and $\tilde \rho^{n+1}$ are two solutions to the Scharfetter--Gummel scheme obtained from $\rho^n$ and $\tilde \rho^n$, we have according to this way of  splitting that
\begin{align*}
|K|\left(\rho^{n+1}_K-\tilde \rho^{n+1}_K\right)  & = |K| \left(\rho^n_K-\tilde \rho^n_K\right) - \dt \kappa \sum_{L\sim K} \tau_{KL} \sigma(w_{KL}) \left(\rho_K^{n+1}-\tilde \rho_K^{n+1}\right)\\
&\qquad  - \dt\kappa \sum_{L\sim K}\tau_{KL} \sigma(w_{LK})\left(\rho_L^{n+1}-\tilde \rho_L^{n+1}\right)\\
&\qquad +\dt \kappa \sum_{L\sim K} \tau_{KL} \left(\tilde \rho_K^{n+1} - \tilde \rho_L^{n+1}\right)\left(\sigma(\tilde w_{KL} )- \sigma(w_{KL})\right)\\
&\qquad +\dt \kappa \sum_{L\sim K} \tau_{KL} \tilde \rho_L^{n+1} \left((\sigma (\tilde w_{KL})+\sigma(\tilde w_{LK})) - (\sigma(w_{KL})+\sigma(w_{LK})\right).
\end{align*}
Here, we have used the notation $w_{KL} = \frac{d_{KL}}{2\kappa} q_{KL}[\rho^{n+1},\rho^n]$ and the symmetry $w_{KL}=-w_{LK}$. (Of course, $\tilde w_{KL}$ is analogously defined.)
For $w$ and $\tilde w$ in the interval $[-1,1]$, we have the elementary estimates
\begin{gather*}
|\sigma(w) - \sigma(\tilde w)| \le \frac85|w-\tilde w|,\\
|(\sigma(w) + \sigma(-w)) - (\sigma(\tilde w) + \sigma(-\tilde w))|  = 2| w\coth(w)-\tilde w\coth(\tilde w)| \le \frac23 |w+\tilde w||w-\tilde w|.
\end{gather*}
Hence, supposing without loss of generality that $\rho_K^{n+1}\ge \tilde \rho_K^{n+1}$ and using the fact that $\sigma $ is nonnegative, we get that
\begin{align*}
\MoveEqLeft
|K||\rho_K^{n+1} - \tilde \rho_K^{n+1}| +\dt \kappa \sum_{L\sim K} \tau_{KL} \sigma(w_{KL}) |\rho_K^{n+1}-\tilde \rho_K^{n+1}|\\
&\le |K| |\rho_K^n-\tilde \rho_K^n| + \dt\kappa \sum_{L\sim K} \tau_{KL} \sigma(w_{LK}) |\rho_L^{n+1}-\tilde \rho_L^{n+1}| \\
&\qquad +\frac{8}5 \dt \kappa \sum_{L\sim K} \tau_{KL} |\tilde \rho_K^{n+1} - \tilde \rho_L^{n+1}| |w_{KL}- \tilde  w_{KL}| \\
&\qquad + \frac23 \dt \kappa \sum_{L\sim K} \tau_{KL} \tilde \rho_L^{n+1}|w_{KL}+\tilde w_{KL}| |w_{KL}-\tilde w_{KL}|.
\end{align*}
Summing over $K$ and relabelling, we observe that the second terms on both sides of the inequality cancel out, and we arrive at
\begin{equation}
\label{702}
\|\rho^{n+1} - \tilde \rho^{n+1}\|_{L^1(\hat \Omega)}  \le \|\rho^n - \tilde \rho^n\|_{L^1(\hat \Omega)} + I_1 +\tilde I_1 + I_2+\tilde I_2,
\end{equation}
where
\begin{align*}
I_1  & = \frac85\dt\kappa \sum_K \sum_{L\sim K} \tau_{KL} |\rho_K^{n+1}-\rho_L^{n+1}| | w_{KL} - \tilde  w_{KL}|,\\
I_2 & = \frac23 \dt\kappa \sum_K \sum_{L \sim K} \tau_{KL} \rho_L^{n+1} |w_{KL}+\tilde w_{KL}| |w_{KL}-\tilde w_{KL}|,
\end{align*}
and $\tilde I_1$ and $\tilde I_2$ are defined analogously with $\rho^{n+1}$ replaced by $\tilde \rho^{n+1}$. Of course, it is enough to estimate $I_1$ and $I_2$ as the argument for the other two terms is exactly identical.

We start by noticing that 
\[
|w_{KL}-\tilde w_{KL}| \le \frac{d_{KL}}{4\kappa} \Lip(W) \left(\|\rho^{n+1}-\tilde \rho^{n+1}\|_{L^1(\hat \Omega)} + \|\rho^n-\tilde \rho^n\|_{L^1(\hat \Omega)}\right),
\]
where we made use of the Lipschitz assumption on the interaction potential in \eqref{A3}.
We thus have the bound
\[
I_1 \le \frac25 \dt \Lip(W) \sum_K \sum_{L\sim K} |K\edge L| |\rho_K^{n+1} - \rho_L^{n+1}| \left(\|\rho^{n+1}-\tilde \rho^{n+1}\|_{L^1(\hat \Omega)} + \|\rho^n-\tilde \rho^n\|_{L^1(\hat \Omega)}\right).
\]
We now invoke the gradient estimate in \eqref{fine_gradient_estimate} and the smallness assumption \eqref{smallness_condition} on the time step size to deduce that
\begin{equation}
\label{700}
I_1 \le \frac18 \|\rho^{n+1}-\tilde \rho^{n+1}\|_{L^1(\hat \Omega)} + \frac18 \|\rho^n-\tilde \rho^n\|_{L^1(\hat \Omega)}.
\end{equation}

Similarly, because $|w_{KL}|\le \frac{d_{KL}}{2\kappa} \Lip(W)$ by \eqref{A2}, we have that
\[
I_2\le \frac16 \frac{\dt}{\kappa}\Lip(W)^2 \sum_K \sum_{L\sim K} |K\edge L| d_{KL} \rho_L^{n+1} \left(\|\rho^{n+1}-\tilde \rho^{n+1}\|_{L^1(\hat \Omega)} + \|\rho^{n}-\tilde \rho^{n}\|_{L^1(\hat \Omega)} \right).
\]
By a relabelling argument, we deduce that
\[
I_2 \le \frac13 \frac{\dt}{\kappa}\Lip(W)^2\left(\|\rho^{n+1}-\tilde \rho^{n+1}\|_{L^1(\hat \Omega)} + \|\rho^{n}-\tilde \rho^{n}\|_{L^1(\hat \Omega)} \right),
\]
and applying \eqref{smallness_condition} again, we obtain
\begin{equation}
\label{701}
I_2  \le \frac18 \|\rho^{n+1}-\tilde \rho^{n+1}\|_{L^1(\hat \Omega)} + \frac18 \|\rho^n-\tilde \rho^n\|_{L^1(\hat \Omega)}.
\end{equation}
It remains to use \eqref{700} and \eqref{701} in \eqref{702} to deduce \eqref{e:stable} with $C=3$.
\end{proof}
Before turning to the proof of Theorem~\ref{thm:stat}, we list some auxiliary elementary properties of the function $\alpha_\kappa$  occurring in the definition of the dissipation~\eqref{e:def:dissipation} and following from the ones of $\theta_\kappa$ in Remark~\ref{lem:theta}.
\begin{remark}\label{lem:alpha}
  The function $\alpha_\kappa$ from~\eqref{e:def:alpha} satisfies the following properties:
  \begin{enumerate}[ (i) ]
   \item For any $v\in \R$ is $\alpha_\kappa(\cdot,\cdot;v)$  jointly convex on $\R_+\times \R_+$.
   \item It holds $\alpha_\kappa(a,b,r) = 0$ if and only if $v=-\kappa\log\frac{a}{b}$.
   \item It holds $\alpha_{\kappa}(a,b;0) = \kappa(a-b)\log\frac{a}{b}$.
   \item It holds $\lim_{\kappa\to 0} \kappa \alpha_{\kappa}(a,b;v)= a (v_+)^2 + b (v_-)^2$.
  \end{enumerate}
\end{remark}
\begin{proof}[Proof of Theorem~\ref{thm:stat}]
  Let $\rho$ be a solution to~\eqref{e:stat:fixed_point}. Then it holds for any $K\sim L$
  \begin{equation}
   \frac{\rho_K}{\rho_L} = \exp\biggl(-  \kappa^{-1} \sum_J \abs{J} \rho_J \bigl( W(x_K- x_J) - W(x_K-x_J)\bigr)\biggr) = \exp\bra*{- \kappa^{-1} d_{KL} q_{KL}[\rho]}.
  \end{equation}
  Hence, we have that $\theta_{\kappa}(\rho_K,\rho_L,;d_{KL} q_{KL}[\rho]) = 0$ by Remark~\ref{lem:theta}\eqref{rem3}, showing that~\eqref{thm:stat:2} implies~\eqref{thm:stat:1}. 
  
  Now, let $\rho$ be a solution to~\eqref{e:stat:def}. We introduce the notation 
  \[
  \Phi_K[\rho] = \frac{\exp\bra*{-\kappa^{-1} \sum_{J} \abs{J} \rho_J W(x_K-x_J)}}{\sum_{\tilde K} \abs{\tilde K} \exp\bra*{-\kappa^{-1} \sum_{J} \abs{J} \rho_J W(x_{\tilde K}-x_J)}} .
  \]
  Our goal is to show that $\rho$ solves the fixed point equation \eqref{e:stat:fixed_point}, i.e., $\rho_K = \Phi_K[\rho]$.  Notice first that by using the following auxiliary relations
  \[
    d_{KL} q_{KL}[\rho] = - \kappa \log \frac{\Phi_K[\rho]}{\Phi_L[\rho]} \quad\text{and}\quad \exp\bra*{\frac{d_{KL}}{2\kappa} q_{KL}[\rho]} = \sqrt{\frac{\Phi_L[\rho]}{\Phi_K[\rho]}},
  \]
  we may write for any $\set{h_K}_K$, similarly as in the proof of Lemma \ref{L2}, that
  \begin{align}
    \theta_\kappa\bra[\big]{ h_K \Phi_K[\rho],h_L\Phi_L[\rho]; d_{KL} q_{KL}(\rho)} &= d_{KL} q_{KL}[\rho] \frac{h_k \sqrt{\Phi_K[\rho] \Phi_L[\rho]} - h_L \sqrt{\Phi_K[\rho] \Phi_L[\rho]}}{\sqrt{\frac{\Phi_L[\rho]}{\Phi_K[\rho]}}-\sqrt{\frac{\Phi_K[\rho]}{\Phi_L[\rho]}}} \\
    &= - \kappa \bra*{h_K - h_L} \frac{\log \Phi_K[\rho] - \log \Phi_L[\rho]}{\frac{1}{\Phi_K[\rho]} - \frac{1}{\Phi_L[\rho]}} \\
    &= \kappa \bra*{h_K - h_L} \frac{1}{\Lambda\bra*{ \frac{1}{\Phi_K[\rho]}, \frac{1}{\Phi_L[\rho]}}} . \label{e:theta:identity}
  \end{align}
  Therefore, supposing that $\rho_K = h_K\Phi_K[\rho]$ for some reals $h_K$, which is possible thanks to the positivity of $\Phi_K[\rho]$, equation \eqref{e:stat:def} translates into 
  \[
  0 = \sum_{L\sim K} \tau_{KL} \frac{h_K-h_L}{\Lambda({\Phi_K[\rho]}^{-1},{\Phi_L[\rho]}^{-1})}.
  \]
Testing this equation by some $\varphi_K$ and symmetrizing the resulting sums again, we arrive at the weak formulation
  \[
    0 = \frac{1}{2} \sum_{K}\sum_{L\sim K} \tau_{KL}\frac{\bra*{\varphi_K - \varphi_L} \bra*{h_K - h_L}}{\Lambda\bra*{\Phi_K[\rho]^{-1},\Phi_L[\rho]^{-1}}} . 
  \]
  By choosing $\varphi_K = h_K$ and recalling that the logarithmic mean $\Lambda$ is positive because the $\Phi_K[\rho]$'s are positive, we observe that $\{h_K\}_K$ has to be constant, that means, $h_K=C$ for some $C\in \R$. In particular, it holds that $\rho_K = C \Phi_K[\rho]$, for any cell $K$, and since both $\{\rho_K\}_K$ and $\{\Phi_K[\rho]\}_K$ are probability distributions, we must have  $h_K=C=1$, which is want we aimed to prove. We have thus showed that~\eqref{thm:stat:1} and~\eqref{thm:stat:2} are equivalent.
 
  Now, we consider the variation of $\cF^h$ from~\eqref{e:free_energy:numerical} on $\cP(\cT^h)$. For this fix $s: \cT^h \to \R$ with $\sum_{K} |K| s_K = 0$ and consider the variation 
  \[
   \lim_{\eps\to 0} \frac{\cF^h(\rho+ \eps \, s ) - \cF^h(\rho)}{\eps} = \sum_K |K| D_K\cF^h(\rho) s_K 
  \]
  Hence, we can only identify $D\cF^h$ with a vector up to a constant $c\in \R$ as
 \[
   D_K\cF^h(\rho) = \kappa  \log \rho_K +  \sum_L |L|W(x_K-x_L) \rho_L + c . 
 \]
 Since, $\rho\in \cP(\cT^h)$, we immediately recover $c= \kappa\log Z^h(\rho)$ as in~\eqref{thm:stat:2}. Hence, each probability distribution $\rho$ on $\cP(\cT^h)$ satisfying~\eqref{e:stat:fixed_point} is a critical point of  $\cF^h$ and vise versa, showing the equivalence of~\eqref{thm:stat:2} and \eqref{thm:stat:3}.
 
 Likewise, we have from Remark~\ref{lem:theta} and Remark~\ref{lem:alpha} that $\alpha_\kappa(\rho_K,\rho_L,d_{KL} q_{KL}[\rho])=0$ if and only if $\theta_\kappa(\rho_K,\rho_L,d_{KL} q_{KL}[\rho])=0$, which implies by~\eqref{e:stat:def} that $\rho$ is a stationary solution if and only if $\cD^h(\rho)=0$, showing the equivalence of~\eqref{thm:stat:1} and \eqref{thm:stat:4}.
\end{proof}
\begin{proof}[Proof of Theorem~\ref{thm:longtime}]
  The proof of the large time behavior follows along a standard argument from the theory of dynamical systems (see for instance~\cite[Section 6]{Teschl}). We consider $\cP(\cT^h)\subset \ell^1(\cT^h)$ as a convex compact subset endowed with the $\ell^1(\cT^h)$ topology. 
  By the global well-posedness for the scheme in $\cP(\cT^h)$ from Theorem~\ref{thm:well-posed}, we can define the $\omega$-limit set for any $\rho^0\in \cP(\cT^h)$ given by
  \[
    \omega(\rho^0) =\set*{ \mu \in \cP(\cT^h) : \rho^{n_k} \to \mu \text{ in $\ell^1(\cT^h)$ for some subsequence } n_k \to \infty} .
  \]
  Since the scheme leaves $\cP(\cT^h)$ invariant, each positive orbit $\mathcal{O}^+(\rho^0) = \bigcup_{n\geq 0} \rho^{n}$ is compact in $\cP(\cT^h)$ and we find a convergent subsequence in $\cP(\cT^h)$, showing that $\omega$-limit is non-empty. 
  Since the scheme is by the stability estimate~\eqref{e:stable} in particular continuous in $\ell^1(\cT^h)$, we obtain by~\cite[Lemma 6.5]{Teschl} that any $\omega$-limit set  $\omega(\rho_0)$ is positive invariant, that is, for $\mu \in \omega(\rho^0)$ follows that $\mathcal{O}^+(\mu)\subseteq \omega(\rho^0)$. The compactness also implies by standard arguments that $\operatorname{dist}_{\ell^1(\cT^h)}(\rho^n , \omega(\rho^0))\to 0$ as $t\to \infty$ (see~\cite[Lemma 6.7]{Teschl}). 
   
  Hence, it is left to characterize the $\omega$-limit. To do so, we note that the free energy $\cF^h$ is bounded from below (by zero) on $\cP(\cT^h)$, thanks to the nonnegativity assumption in \eqref{A0}.  By the free energy dissipation relation~\eqref{e:FED:numerical} we have that $\cF^h(\rho^{n})$ is also monotone decreasing along the scheme, hence $\cF^h(\rho^n)\to \cF^\infty\in \R$. 
  In particular, for any $\overline\rho^0\in \omega(\rho^0)$ holds $\cF^h(\overline\rho^0)=\cF^\infty$. 
  Starting the scheme from any $\overline\rho^0\in \omega(\rho^0)$, we have from ~\eqref{e:FED:numerical} for any $N\in \N$ the identity
  \[
    \cF^\infty + \kappa \sum_{n=0}^N \cH(\overline\rho^{n+1}| \overline\rho^{n}) + \dt \sum_{n=1}^N \cD^h(\overline\rho^n) = \cF^\infty ,
  \]
  which implies that $\cH(\overline\rho^{n+1}| \overline\rho^{n})= 0 = \cD^h(\overline\rho^{n+1})$ for any $n\in \N_0$, since both, relative entropy and dissipation are nonnegative. Hence, $\omega(\rho^0)$ consists of elements $\overline\rho^0\in \cP(\cT^h)$ satisfying $\cD^h(\overline\rho^0)=0$, 
  implying that any $\overline\rho^0\in \omega(\rho^0)$ is a stationary state by Theorem~\ref{thm:stat}. 
\end{proof}

\subsection{Convergence of the scheme}
In this section, we finally turn to the proof of the convergence of the Scharfetter--Gummel scheme, Theorem \ref{thm:convergence}. In particular, we may assume throughout this subsection  that $h$ is small in the sense of \eqref{e:h_small}.

In the next step, we translate the gradient bounds into continuity estimates for the approximate solution.
\begin{lemma}[Translations]\label{prop:translations}
For any $\tau>0$, it holds that
\begin{equation}\label{translation_time}
\int_0^T \biggl(\int_{\Omega} |\rho_{h,\dt}(t+\tau,x) - \rho_{h,\dt}(t,x)|\dx{x}\biggr)^2 \dx{t} \lesssim \tau,
\end{equation}
uniformly in $h$. Moreover, for $\eta\in \R^d$, it holds that
\begin{equation}
\int_0^T \biggl( \int_{\R^d} |\rho_{h,\dt}(t,x+\eta) - \rho_{h,\dt}(t,x)|\dx{x}\biggr)^2 \dx{t}  = o(1),\label{translation_space}
\end{equation}
as $|\eta|\to0$, uniformly in $h$.
\end{lemma}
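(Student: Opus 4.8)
The plan is to derive both estimates from the spatial gradient control \eqref{spatial_gradient}, which I read as the statement that $t\mapsto\abs{\rho_{h,\dt}(t)}_{BV}$ is bounded in $L^2(0,T)$ uniformly in $h$, where $\abs{\rho}_{BV}:=\sum_{K}\sum_{L\sim K}\abs{K\edge L}\,\abs{\rho_K-\rho_L}$ denotes the discrete total variation. The space translation \eqref{translation_space} is the routine half: I would invoke the standard discrete translation inequality for piecewise-constant $BV$ reconstructions on a mesh obeying \eqref{e:diam:h} and \eqref{3}, namely $\norm{\rho_{h,\dt}(t,\cdot+\eta)-\rho_{h,\dt}(t,\cdot)}_{L^1(\R^d)}\lesssim(\abs{\eta}+h)\,\abs{\rho_{h,\dt}(t)}_{BV}$, then square, integrate in time, and use \eqref{spatial_gradient} to bound the right-hand side by $(\abs{\eta}+h)^2$; the usual separate treatment of the regimes $\abs{\eta}\gtrless h$ then yields a bound that is $o(1)$ as $\abs\eta\to0$ uniformly in $h$.

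The genuine difficulty is the time translation \eqref{translation_time}. The naive route — telescoping $\rho^m-\rho^n=\sum_{j=n}^{m-1}(\rho^{j+1}-\rho^j)$ and inserting \eqref{time_derivative} — is doomed: the individual increments scale like $\sqrt{\dt}$, so the triangle inequality destroys the temporal cancellation and produces a bound blowing up as $\dt\to0$. The idea I would follow is to measure the increments in a \emph{weak} norm, in which they are genuinely of order $\dt$ (Lipschitz in time), and only afterwards to upgrade back to $L^1$ by spending the spatial $BV$ control. Concretely, write $n(t),m(t)$ for the indices with $t\in[t^{n(t)},t^{n(t)+1})$ and $t+\tau\in[t^{m(t)},t^{m(t)+1})$, and work with the Kantorovich--Rubinstein norm $\norm{g}_{\ast}:=\sup\set[\big]{\sum_K\abs{K}\psi_K g_K:\psi\ \text{$1$-Lipschitz}}$.

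The first and decisive step is an increment bound. Testing the scheme \eqref{e:time:flux} against a $1$-Lipschitz $\psi$ and symmetrizing gives $\sum_K\abs{K}\psi_K(\rho_K^{n+1}-\rho_K^n)=-\tfrac{\dt}{2}\sum_K\sum_{L\sim K}(\psi_K-\psi_L)F_{KL}^{n+1}$ with $\abs{\psi_K-\psi_L}\le d_{KL}$. Splitting $F^{n+1}$ into advective and diffusive parts through \eqref{e:flux:identity2}, using $\abs{q_{KL}^{n+1}}\le\Lip(W)$ and the geometric identity $\sum_{L\sim K}d_{KL}\abs{K\edge L}\lesssim\abs{K}$ (valid for Voronoi cells), the advective contribution is bounded by $\dt\,\Lip(W)\sum_K\abs{K}\rho_K^{n+1}=\dt\,\Lip(W)$ — crucially \emph{without} any factor $h^{-1}$, the perimeter growth being absorbed by the volume identity — while the diffusive contribution is bounded by $\dt\,\kappa\,\abs{\rho^{n+1}}_{BV}$. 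Hence
\[
\norm{\rho^{n+1}-\rho^n}_{\ast}\lesssim\dt\bigl(\Lip(W)+\kappa\abs{\rho^{n+1}}_{BV}\bigr),
\]
which, being of order $\dt$, telescopes \emph{without loss} to $\norm{\rho^{m}-\rho^{n}}_{\ast}\lesssim\dt\sum_{j=n}^{m-1}(\Lip(W)+\kappa\abs{\rho^{j+1}}_{BV})$. I would then invoke the interpolation inequality $\norm{g}_{L^1}\lesssim\norm{g}_{\ast}^{1/2}\abs{g}_{BV}^{1/2}$ for mean-zero $g$ — proved by mollifying at scale $\delta$, estimating $\norm{g-g\ast\chi_\delta}_{L^1}\lesssim\delta\abs{g}_{BV}$ and $\norm{g\ast\chi_\delta}_{L^1}\lesssim\delta^{-1}\norm{g}_{\ast}$, and optimizing in $\delta$ — applied to $g=\rho^{m}-\rho^{n}$, which is mean-zero by mass conservation (Lemma~\ref{L1}). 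Squaring $\norm{\rho^m-\rho^n}_{L^1}^2\lesssim\norm{\rho^m-\rho^n}_{\ast}\,\abs{\rho^m-\rho^n}_{BV}$, integrating in $t$, and using that each index $j$ lies in the window $[n(t),m(t))$ for a set of $t$ of measure exactly $\tau$, two applications of Cauchy--Schwarz together with the $L^2$-in-time bound \eqref{spatial_gradient} convert the window sum into the factor $\tau$ and give \eqref{translation_time}.

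The main obstacle is exactly this first step combined with the interpolation: one must identify the correct weak norm in which the scheme is Lipschitz in time (so that cancellation is captured automatically and the $h^{-1}$ growth of the total advective flux is defeated by the volume identity), and then establish the discrete interpolation inequality with constants independent of $h$, accounting for the $O(h)$ discrepancy between cell averages and point values. Once these two ingredients are in place, the window combinatorics and the reduction of \eqref{translation_space} to \eqref{spatial_gradient} are entirely standard.
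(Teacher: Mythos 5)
Your treatment of the spatial translates has a genuine gap: the boundary contribution. The statement integrates over $\R^d$ with $\rho_{h,\dt}$ extended by zero, so the translation inequality you invoke, $\norm{\rho_{h,\dt}(t,\cdot+\eta)-\rho_{h,\dt}(t,\cdot)}_{L^1(\R^d)}\lesssim(\abs{\eta}+h)\abs{\rho_{h,\dt}(t)}_{BV}$ with $\abs{\cdot}_{BV}$ the \emph{interior} edge sum, is simply false: take $\rho_{h,\dt}$ constant on $\hat\Omega$, for which the right-hand side vanishes while the left-hand side is of order $\abs{\eta}\,\abs{\partial\hat\Omega}$. The total variation of the zero-extension contains the boundary trace $\sum_{K}\rho_K\abs{\partial K\cap\partial\hat\Omega}$, which by \eqref{3} is only bounded by $C_{\iso}/h$, so no uniform-in-$h$ estimate comes out of \eqref{spatial_gradient} alone. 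This is exactly why the lemma asserts only $o(1)$ rather than $O(\abs{\eta})$. The paper splits off the boundary layer $\set{x\in\hat\Omega:\ x\pm\eta\notin\hat\Omega}$ and controls $\int_{A_\eta}\rho_{h,\dt}\dx{x}$ by equi-integrability, which it extracts from the uniform entropy bound $\cF^h(\rho^n)\le\cF^h(\rho^0)\le\cF(\rho_0)+h\Lip(W)$ via a Jensen/de la Vall\'ee-Poussin argument with $\varphi(z)=z\max\set{\log z,1}$; the interior part is then handled essentially as you propose (via the characteristic function of segments crossing an edge, giving $\abs{\eta}\sum_{K}\sum_{L\sim K}\abs{K\edge L}\abs{\rho_K^n-\rho_L^n}$ and then \eqref{spatial_gradient}). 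Without some use of the entropy bound near $\partial\hat\Omega$, mass could a priori concentrate in the boundary layer and your argument cannot close.

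For the time translates your route is genuinely different from the paper's: the paper disposes of \eqref{translation_time} in one sentence ("\eqref{time_derivative} together with the triangle inequality"), and your objection to that route is well taken --- telescoping \eqref{time_derivative} with Cauchy--Schwarz yields at best $\tau^2/\dt$, which exceeds $\tau$ once $\tau\gg\dt$, so the paper is at the very least suppressing an argument of the kind you describe. Your Kantorovich--Rubinstein/interpolation strategy (an Aubin--Lions--Simon--type upgrade of a Lipschitz-in-time bound in a weak norm) is a legitimate repair, and your increment bound $\norm{\rho^{n+1}-\rho^n}_{*}\lesssim\dt(\Lip(W)+\kappa\abs{\rho^{n+1}}_{BV})$ checks out using \eqref{e:flux:identity2}, $\abs{q_{KL}^{n+1}}\le\Lip(W)$, $\abs{s\coth s}\le 2$ for $\abs{s}\le1$ (which needs \eqref{e:h_small}), and $\sum_{L\sim K}d_{KL}\abs{K\edge L}=2d\abs{K}$. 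However, two points. First, the rate: since the BV seminorm is only controlled in $\ell^2$ in time by \eqref{spatial_gradient}, Cauchy--Schwarz gives $\kappa\dt\sum_{j=n}^{m-1}\abs{\rho^{j+1}}_{BV}\lesssim\sqrt{\tau}$, so after interpolation and time integration your argument delivers $O(\sqrt{\tau})$, not the claimed $O(\tau)$; this still suffices for the Riesz--Fr\'echet--Kolmogorov compactness the lemma feeds into, but you should state the weaker rate. Second, the interpolation inequality $\norm{g}_{L^1}^2\lesssim\norm{g}_{*}\abs{g}_{BV}$ for the zero-extension of $g=\rho^m-\rho^n$ inherits the same boundary-trace defect as above ($\abs{g}_{BV(\R^d)}$ picks up a term of order $h^{-1}\norm{g}_{L^1}$), so the mollification must be localized away from $\partial\hat\Omega$ and the boundary layer again handled with the entropy bound. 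In short: the decisive missing ingredient throughout is the equi-integrability coming from $\cF^h(\rho^n)\le\cF^h(\rho^0)$, which your proposal never uses.
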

In the second statement of the lemma, we think of $\rho_{h,\dt}$ being extended trivially to all of $\R^d$, so that the spatial translations are all well-defined.  Notice that we lose an order of convergence in \eqref{translation_space} because of the presence of the boundary $\partial \Omega$. 
\begin{proof}
We start with the argument for the spatial translations estimated in \eqref{translation_space}. Notice that since $\rho_{h,\dt}$ is a probability measure on $\hat \Omega$, we may always  assume that $|\eta|\le 1$. We first split the integral in $x$ according 
\begin{align}
\MoveEqLeft \int_{\R^d} \bigl\lvert\rho_{h,\dt}(t,x+\eta) - \rho_{h,\dt}(t,x)\bigr\rvert\dx{x}\nonumber \\
 & = \int_{\{x\in\hat \Omega, \, x+\eta\in \hat \Omega\}} \bigl\lvert \rho_{h,\dt}(t,x+\eta) - \rho_{h,\dt}(t,x) \bigr\rvert \dx{x}
 +\int_{\{x\in\hat \Omega, \, x\pm\eta\not \in \hat \Omega\}} \rho_{h,\dt}(t,x)\dx{x}\label{301}
\end{align}
Let us start with the  treatment of the first integral term. Similarly as in \cite{EymardGallouetHerbin00}, we let $[x,z]$ be the line segment between two points $x$ and $z$ and define the characteristic function of neighboring cells $K\sim L$ by
\[
\chi_{KL}(x,z) =\begin{cases} 1&\text{, if }[x,z]\cap K\edge L\not=\emptyset,\\
0&\text{, else} \:. 
\end{cases}
\]
By geometric arguments, we observe that
\[
\int_{\hat \Omega} \chi_{KL}(x,x+\eta)\dx{x} \le |K\edge L| \, |\eta|.
\]
With the help of the triangle inequality, we thus estimate
\begin{align*}
\int_{\{x\in \hat \Omega,\, x+\eta\in \hat \Omega\}}\bigl|\rho_{h,\dt}(t,x+\eta) - \rho_{h,\dt}(t,x)\bigr|\dx{x} &  \le  \sum_K \sum_{L\sim K} |\rho_K^n-\rho_L^n| \int_{\hat \Omega}\chi_{KL}(x,x+\eta)\dx{x} \\
&\le |\eta| \sum_K\sum_{L\sim K} |K\edge L| \, |\rho_K^n-\rho_L^n|.
\end{align*}
Squaring both sides, integrating in time and invoking the gradient bound \eqref{spatial_gradient} together with the control of the initial datum in \eqref{ini_dat} yields an error of order $O(|\eta|)$. 

Considering the second term in \eqref{301}, we notice that
\[
\int_{\{x\in\hat \Omega, \, x\pm\eta\not \in \hat \Omega\}} \rho_{h,\dt}(t,x)\dx{x} \le 2\int_{\{x\in \hat \Omega:\: \dist(x,\partial\hat \Omega)\le |\eta|\}} \rho_{h,\dt}\dx{x}.
\]
To estimate the right-hand side, we first note, that it is enough to estimate the integral on the set $A_\eta =\set*{\dist(\cdot,\partial\hat\Omega) \le |\eta|} \cap \set{\rho_{h,\dt}(t,\cdot)\geq 1 }$, since on the complement the integral is bounded by $\abs*{A_\eta}\to 0$ as $\eta\to 0$. Now, we let $\varphi(z) = z\max\set*{\log z,1}$ and denote with $\varphi^{-1}(r) = \sup\set*{z : z\leq \varphi(r)}$. With this, we can apply Jensen's inequality
\begin{align*}
\int_{ A_\eta } \rho_{h,\dt}\dx{x} \le \bigl|A_\eta \bigr| \varphi^{-1}\biggl( \frac{1}{\abs*{A_\eta}} \int_{A_\eta}\varphi (\rho_{h,\dt})\dx{x}\biggr)
\end{align*}
Thanks to the bound on the free energy \eqref{e:FED:numerical} and the initial datum \eqref{ini_dat}, this estimate yields that
\[
\int_{\{x\in\hat \Omega, \, x\pm\eta\not \in \hat \Omega\}} \rho_{h,\dt}(t,x)\dx{x}  \le 2 \bigl|A_\eta\bigr| \varphi^{-1}\biggl(\frac{C}{\bigl|A_\eta\bigr|}\biggr),
\]
for some $C>0$, independent of $h$ and $\eta$. It is clear that $\varphi^{-1}$ is growing only sublinearily, and therefore, we have an $o(1)$ bound as $|\eta|\to 0$, uniformly in $h$.

The estimate on the temporal increments \eqref{translation_time} is obtained simply by  using \eqref{time_derivative}	together with the triangle inequality.
\end{proof}
Before proving the convergence of the scheme, we provide an auxiliary lemma on the convergence of discrete gradients, which we introduce in the following. For this, we define the \emph{diamonds}
\[
D_{KL} = \left\{t x_K+(1-t) y: t\in(0,1], y\in K\edge L\right\} \subset K,
\]
for any $K, L\in \cT$ with $L\sim K$. Of course, $K = \dot \cup_{L\sim K} D_{KL}\dot \cup \{x_K\}$ by construction. Since~$\cT^h$ is a Voronoi tesselation, we obtain that the volume of these cells can be computed according to the law
\begin{equation}\label{300}
|D_{KL}| = \frac{d_{KL}}{2d} |K\edge L|.
\end{equation}
For a sequence $\zeta_{h,\dt} = \{\zeta^n_K\}$ on $(0,T)\times \hat \Omega$, we may now introduce the discrete gradients
\begin{equation}\label{12}
\grad^h \zeta_{h,\dt}(t,x) = \frac{d}{d_{KL}} (\zeta^n_L-\zeta^n_K)\nu_{KL}\quad \text{for any }(t,x)\in (t^n,t^{n+1})\times D_{KL}.
\end{equation}
Here and in the following,  $\nu_{KL}$ denotes  the outer (with respect to $K$) unit normal vector on the edge $K\edge L$.

It is a well-known fact that these gradients are convergent.
\begin{lemma}[Convergence of discrete gradients \cite{EymardGallouet03}]\label{L3}
Let $\zeta\in C^{\infty}([0,T]\times \R^d)$ be given and set $\zeta_{h,\dt} =  \zeta_K^n = \zeta(t^n,x_K)$ in $(t^n,t^{n+1})\times K$ for any $n$ and any $K$. Then 
\[
\grad^h\zeta_{h,\dt} \to \grad \zeta \quad \text{weakly-$*$ in }L^{\infty}((0,T)\times \Omega).
\]
\end{lemma}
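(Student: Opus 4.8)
The plan is to argue by weak-$*$ compactness together with a duality (discrete integration by parts) identification of the limit. First I would record the uniform bound
\[
\abs*{\grad^h\zeta_{h,\dt}(t,x)} = \frac{d}{d_{KL}}\abs*{\zeta_L^n - \zeta_K^n}\le d\,\Lip(\zeta) \quad\text{on } D_{KL},
\]
where $\Lip(\zeta)$ is taken over the (uniformly bounded) set $[0,T]\times\hat\Omega$. Hence $\{\grad^h\zeta_{h,\dt}\}$ is bounded in $L^\infty((0,T)\times\Omega;\R^d)$ and by Banach--Alaoglu possesses weak-$*$ limit points; it then suffices to show that every such limit equals $\grad\zeta$, for which in turn it is enough to test against an arbitrary $\Psi\in C_c^\infty((0,T)\times\Omega;\R^d)$, a dense subset of $L^1$, and to prove $\int_0^T\!\int_\Omega\grad^h\zeta_{h,\dt}\cdot\Psi\to\int_0^T\!\int_\Omega\grad\zeta\cdot\Psi$.

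Next I would rewrite the test integral using the diamond structure \eqref{12} and the volume law \eqref{300}. On each $D_{KL}$ one has $\grad^h\zeta_{h,\dt} = \frac{d}{d_{KL}}(\zeta_L^n - \zeta_K^n)\nu_{KL}$, so after summing and grouping the two half-diamonds $D_{KL}$ and $D_{LK}$ adjacent to a common edge — using $\nu_{LK} = -\nu_{KL}$, $\zeta_K^n - \zeta_L^n = -(\zeta_L^n - \zeta_K^n)$, and $\abs{D_{KL}} = \abs{D_{LK}}$ — a discrete integration by parts moves the difference off $\zeta$: at each fixed time step,
\[
\sum_K\sum_{L\sim K}\frac{d}{d_{KL}}(\zeta_L^n - \zeta_K^n)\int_{D_{KL}}\Psi\cdot\nu_{KL}\dx{x} = -\sum_K\zeta_K^n\sum_{L\sim K}\frac{d}{d_{KL}}\int_{D_{KL}\cup D_{LK}}\Psi\cdot\nu_{KL}\dx{x}.
\]
I would then compare the inner sum against $\int_{\partial K}\Psi\cdot\nu\dx{S} = \int_K\div\Psi\dx{x}$ via the divergence theorem (valid since, for $h$ small, $\Psi$ is supported away from $\partial\hat\Omega$, so only faces $K\edge L$ contribute). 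Using the Voronoi orthogonality $x_L - x_K = d_{KL}\nu_{KL}$, the law \eqref{300}, and the smoothness of $\Psi$, a Fubini slicing of the bicone $D_{KL}\cup D_{LK}$ — whose $\nu_{KL}$-sections are scaled copies of $K\edge L$ and whose volume satisfies $\frac{d}{d_{KL}}\abs{D_{KL}\cup D_{LK}} = \abs{K\edge L}$ — shows that $\frac{d}{d_{KL}}\int_{D_{KL}\cup D_{LK}}\Psi\cdot\nu_{KL}\dx{x}$ and $\int_{K\edge L}\Psi\cdot\nu_{KL}\dx{S}$ differ by a per-edge remainder $B_{KL}$ with $\abs{B_{KL}}\lesssim h\,\abs{K\edge L}\,\norm{\grad\Psi}_\infty$.

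The crux, and the main obstacle, is controlling the accumulated consistency error $E = \sum_K\zeta_K^n\sum_{L\sim K}B_{KL}$: a naive bound costs $\sum_K\abs{\partial K}\lesssim h^{-1}$ by the isoperimetric regularity \eqref{3}, leaving an $O(1)$ term. The remedy is that $B_{KL}$ is antisymmetric under the edge flip $K\leftrightarrow L$ (both constituents change sign with $\nu_{KL}$, while the domain and $d_{KL}$ are invariant), so symmetrizing gives $E = \tfrac12\sum_K\sum_{L\sim K}(\zeta_K^n - \zeta_L^n)B_{KL}$; the extra factor $\abs{\zeta_K^n - \zeta_L^n}\le\Lip(\zeta)\,d_{KL}\lesssim h$ upgrades the estimate to $\abs{E}\lesssim h^2\sum_K\abs{\partial K}\lesssim h$. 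With the error absorbed, $\sum_K\zeta_K^n\int_K\div\Psi\dx{x}\to\int_\Omega\zeta\,\div\Psi\dx{x}$ because $\zeta_K^n = \zeta(t^n,x_K)$ approximates $\zeta$ on $K$ to order $h$; integrating in time (where the piecewise-constant-in-time interpolation converges by smoothness of $\Psi$) yields $\int\grad^h\zeta_{h,\dt}\cdot\Psi\to -\int\zeta\,\div\Psi = \int\grad\zeta\cdot\Psi$, the last step by integration by parts since $\Psi$ is compactly supported in $\Omega$. As the limit is independent of the chosen subsequence, the entire sequence converges weak-$*$, which proves the claim.
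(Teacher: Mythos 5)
Your argument is correct and complete. Note that the paper itself does not prove Lemma~\ref{L3}: it defers to \cite[Lemma 2]{EymardGallouet03}, stating only that the $L^\infty$ version ``follows from minor modifications of the original proof.'' Your proof is precisely the standard Eymard--Gallou\"et consistency argument that this citation points to: uniform boundedness ($\abs{\grad^h\zeta_{h,\dt}}\le d\,\Lip(\zeta)$) plus identification of weak-$*$ limit points by duality against $C_c^\infty$ fields. The one genuinely delicate step is the one you correctly isolate as the crux: the per-edge consistency defect $B_{KL}$ between $\frac{d}{d_{KL}}\int_{D_{KL}\cup D_{LK}}\Psi\cdot\nu_{KL}\dx{x}$ and $\int_{K\edge L}\Psi\cdot\nu_{KL}\dx{S}$ is only $O(h\abs{K\edge L})$, which after summing against $\sum_K\abs{\partial K}\lesssim h^{-1}\abs{\hat\Omega}$ from \eqref{3} would leave an $O(1)$ residue; your observation that $B_{LK}=-B_{KL}$, so that a second summation by parts trades $\zeta_K^n$ for $\zeta_K^n-\zeta_L^n=O(h)$, is exactly the mechanism that makes the scheme consistent, and your bookkeeping of it (including the exact cancellation $\frac{d}{d_{KL}}\abs{D_{KL}\cup D_{LK}}=\abs{K\edge L}$ from \eqref{300} and the restriction to cells away from $\partial\hat\Omega$ for the divergence theorem) is sound.
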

In \cite[Lemma 2]{EymardGallouet03}, an $L^2$ variant of this result was established and the present version follows from minor modifications of the original proof, which are omitted here.

With these preparations at hand, we are now in the position to derive the compactness of the sequence of approximate solutions. Here, the discrete gradient $\grad^h\rho_{h,\dt}$ is defined by \eqref{12}.
\begin{proposition}[Compactness]\label{prop:compactness}
There exists $\rho\in L^2((0,T);L^1(\Omega))$ with $\grad \rho \in L^1((0,T)\times \Omega)$, such that $\rho_{h,\dt} \to \rho$ strongly in $L^2((0,T);L^1(\Omega))$ and $\grad^h \rho_{h,\dt} \to \grad \rho$ weakly in $L^2((0,T);L^1( \Omega))$. 
\end{proposition}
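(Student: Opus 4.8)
The plan is to deduce strong compactness of $\{\rho_{h,\dt}\}$ from the translation estimates of Lemma~\ref{prop:translations} via a Kolmogorov--Riesz--Fr\'echet argument, and then to identify the weak limit of the discrete gradients through a discrete summation by parts together with Lemma~\ref{L3}. First I would record that, since every $\rho_{h,\dt}(t,\cdot)$ is a probability density supported in the fixed bounded set $\hatOmega$, the family is bounded in $L^\infty((0,T);L^1(\hatOmega))$, hence in $L^2((0,T);L^1(\Omega))$, with no mass escaping to infinity. Applying the Cauchy--Schwarz inequality in time to \eqref{translation_time} and \eqref{translation_space} produces space- and time-translation estimates in $L^1((0,T)\times\hatOmega)$ that vanish as $|\eta|\to0$ and $\tau\to0$, uniformly in $h$ and $\dt$. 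The Kolmogorov--Riesz--Fr\'echet theorem then yields relative compactness in $L^1((0,T)\times\hatOmega)$, so that along a subsequence $\rho_{h,\dt}\to\rho$ strongly in $L^1((0,T)\times\Omega)$ and, passing to a further subsequence, almost everywhere.

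To upgrade to the space $L^2((0,T);L^1(\Omega))$ claimed in the statement, I would combine the uniform bound $\norm{\rho_{h,\dt}(t,\cdot)-\rho(t,\cdot)}_{L^1(\Omega)}\le 2$, valid by mass conservation (Lemma~\ref{L1}), with the fact that $t\mapsto\norm{\rho_{h,\dt}(t,\cdot)-\rho(t,\cdot)}_{L^1(\Omega)}$ tends to zero in $L^1(0,T)$; dominated convergence in time then promotes this to convergence of the squared norm, i.e.\ strong convergence in $L^2((0,T);L^1(\Omega))$. Passing nonnegativity and the normalization to the limit shows $\rho(t,\cdot)\in\cP(\Omega)$ for almost every $t$.

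For the gradients I would first convert the spatial gradient bound into a bound on $\grad^h\rho_{h,\dt}$. Using the diamond decomposition and the volume law \eqref{300}, one computes at each discrete time level that $\int_{\hatOmega}\abs{\grad^h\rho_{h,\dt}(t,\cdot)}\dx{x}=\tfrac12\sum_K\sum_{L\sim K}\abs{K\edge L}\,\abs{\rho_K^{n}-\rho_L^{n}}$, so that squaring, summing in time and invoking \eqref{spatial_gradient} together with the initial-energy bound \eqref{ini_dat} gives boundedness of $\grad^h\rho_{h,\dt}$ in $L^2((0,T);L^1(\Omega))$. To identify the limit, I would test $\grad^h\rho_{h,\dt}$ against a smooth compactly supported field $\Psi$, rewrite the pairing by a discrete summation by parts in terms of $\rho_{h,\dt}$ paired with discrete gradients of $\Psi$, and pass to the limit using the strong convergence of $\rho_{h,\dt}$ and the consistency of discrete gradients from Lemma~\ref{L3}; this identifies the distributional limit as $\grad\rho$.

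The main obstacle is the weak compactness of the gradients themselves: since $L^1(\Omega)$ is not reflexive, boundedness in $L^2((0,T);L^1(\Omega))$ does not by itself deliver a weakly convergent subsequence, nor the absolute continuity $\grad\rho\in L^1((0,T)\times\Omega)$ asserted in the statement. I would circumvent this by exploiting the Fisher-information bound \eqref{Fisher_information}, which controls $\grad^h\sqrt{\rho_{h,\dt}}$ in the \emph{reflexive} space $L^2((0,T)\times\Omega)$. Extracting a weak limit there and identifying it, via the almost-everywhere convergence $\sqrt{\rho_{h,\dt}}\to\sqrt{\rho}$, as $\grad\sqrt{\rho}$, shows that $\sqrt{\rho}$ possesses a square-integrable spatial gradient; the factorization $\grad\rho=2\sqrt{\rho}\,\grad\sqrt{\rho}$ then yields $\grad\rho\in L^1((0,T)\times\Omega)$ by Cauchy--Schwarz, while the product of the strongly convergent factor $\sqrt{\rho_{h,\dt}}$ with the weakly-$L^2$ convergent factor $\grad^h\sqrt{\rho_{h,\dt}}$ converges weakly in $L^1$. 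Reconciling this reconstructed limit with $\grad^h\rho_{h,\dt}$ (the discrete chain rule relating the two differs from the exact identity only by a logarithmic-mean factor tending to one as $h\to0$) is the delicate point that upgrades the distributional convergence of the third paragraph to the weak $L^2((0,T);L^1(\Omega))$ convergence claimed.
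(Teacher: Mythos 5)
Your proposal is correct in its overall architecture and coincides with the paper's proof in its two main pillars: strong compactness via Lemma~\ref{prop:translations} and the Riesz--Fr\'echet--Kolmogorov theorem, and the use of the Fisher-information bound \eqref{Fisher_information} to get weak $L^2$ compactness of $\grad^h\sqrt{\rho_{h,\dt}}$, identification of the limit as $\grad\sqrt\rho$ by discrete summation by parts and Lemma~\ref{L3}, and the Cauchy--Schwarz deduction that $\grad\rho\in L^2((0,T);L^1(\Omega))$. Where you genuinely diverge is the last step. The paper extracts from the bound \eqref{spatial_gradient} a weak-$*$ limit $g$ of $\grad^h\rho_{h,\dt}$ in $L^2((\eps,T);\mathcal{M}(\Omega))$ and identifies $g=\grad\rho$ by repeating the integration-by-parts argument, concluding because the limit is known to lie in $L^2((0,T);L^1)$. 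You instead reconstruct the gradient through the product $2\sqrt{\rho_{h,\dt}}\,\grad^h\sqrt{\rho_{h,\dt}}$ (strong $L^2$ times weak $L^2$ converges weakly in $L^1$), thereby sidestepping the non-reflexivity of $L^1$ without ever invoking measures. Your ``delicate point'' does close: on a diamond $D_{KL}$ one has
\begin{equation}
\grad^h\rho_{h,\dt}-2\sqrt{\rho_{h,\dt}}\,\grad^h\sqrt{\rho_{h,\dt}}=\frac{d}{d_{KL}}\bigl(\sqrt{\rho_L^{n}}-\sqrt{\rho_K^{n}}\bigr)^2\nu_{KL},
\end{equation}
so the correction factor is $(\sqrt{\rho_L}+\sqrt{\rho_K})/(2\sqrt{\rho_K})$ rather than a logarithmic mean, and by \eqref{300} the space-time $L^1$ norm of the discrepancy is bounded by $\tfrac{h}{2}\dt\sum_n\sum_K\sum_{L\sim K}\tfrac{|K\edge L|}{d_{KL}}(\sqrt{\rho_K^{n}}-\sqrt{\rho_L^{n}})^2=O(h)$ thanks to \eqref{Fisher_information}, hence vanishes. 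Two points you should still make explicit: the upgrade from weak convergence in $L^1((0,T)\times\Omega)$ to weak convergence in $L^2((0,T);L^1(\Omega))$ requires a density argument testing against $L^2((0,T);L^\infty(\Omega))$ and using the uniform bound from \eqref{spatial_gradient}; and the identification of the weak $L^2$ limit of $\grad^h\sqrt{\rho_{h,\dt}}$ as $\grad\sqrt\rho$ needs the quantitative consistency estimate $|\varphi-\varphi_{KL}^n|\lesssim h\|\varphi\|_{C^1}$ as in the paper, not merely almost-everywhere convergence. Your route buys a cleaner functional-analytic setting (no weak-$*$ compactness in measures) at the cost of the discrete chain-rule error estimate; the paper's route is shorter to state but leans on the a posteriori absolute continuity of the limit.
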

\begin{proof}
The compactness of the sequence in $L^2((0,T);L^1(\Omega))$ is a consequence of Lemma~\ref{prop:translations} and the Riesz--Fr\'echet--Kolmogorov theorem. We now turn to the weak convergence of the gradients. The algebraic arguments are essentially identical to the convergence of the gradients in the smooth setting. We will thus skip those and focus  on the parts, which are different. 

We first show the convergence of the gradients of the roots $\grad^h\sqrt{\rho_{h,\dt}}$, for which we need the control on the Fisher information. In view of \eqref{300}, it is a short computation to obtain the relation
\[
\int_{\dt}^T \int_{\hat \Omega} |\grad^h \sqrt{\rho_{h,\dt}}|^2\dx{x}\dx{t}  = \frac{d}2 \dt\sum_{n=1}^{N} \sum_K \sum_{L\sim K}\frac{|K\edge L|}{d_{KL}} \left(\sqrt{\rho^n_K} - \sqrt{\rho^n_L}\right)^2.
\]
Thanks to \eqref{Fisher_information}, the right-hand side is bounded uniformly in $h$, and  thus, for every $\eps>0$, we find a function $f \in L^2((\eps,T)\times \Omega;\R^d)$ and a subsequence, not relabeled, both dependent on $\eps$, such that
\[
\grad^h \sqrt{\rho_{h,\dt}} \to f\quad\text{weakly in }L^2((\eps,T)\times \Omega).
\]
We have to show that $f = \grad{\sqrt{\rho}}$, and thus, the statement becomes independent of $\eps$, and holds true for the full sequence.

We choose a smooth test function $\varphi\in C_c^{\infty}((0,T)\times \Omega;\R^d)$ and suppose that $\eps$ is small in the sense that $\varphi(t,\cdot) = 0$ for any $t\le \eps$. We also assume that $\dt\le \eps$. Notice then that by algebraic reformulations similar to those in Lemma \ref{L3}, we have the two identities
\[
\int_0^T \int_{\Omega} \sqrt{\rho_{h,\dt}} \, \div \varphi\dx{x}\dx{t} = \sum_n \sum_K\sum_{L\sim K} \frac{d}{d_{KL}}\Bigl(\sqrt{\rho_K^n}-\sqrt{\rho^n_L}\Bigr)\nu_{KL}\cdot \int_{t^n}^{t^{n+1}}\int_{D_{KL}}\varphi^n_{KL}\dx{x}\dx{t},
\]
where $\varphi_{KL}^n$ is the average of $\varphi$ over $(t^n,t^{n+1})\times D_{KL}$, and
\[
- \int_0^T \int_{\Omega} \grad^h\!\sqrt{\rho_{h,\dt}} \cdot \varphi\dx{x}\dx{t} = \sum_n \sum_K\sum_{L\sim K} \frac{d}{d_{KL}}\Bigl(\sqrt{\rho_K^n}-\sqrt{\rho^n_L}\Bigr)\nu_{KL}\cdot \int_{t^n}^{t^{n+1}} \int_{D_{KL}}\varphi \dx{x}\dx{t}.
\]
Now, using $|\varphi - \varphi_{KL}^n| \lesssim h \|\varphi\|_{C^1}$ uniformly in $(t^n,t^{n+1})\times D_{KL}$, which holds true thanks to the smoothness of $\varphi$, and \eqref{300}, we find that
\begin{align*}
\MoveEqLeft \left|\sum_n\sum_K \sum_{L\sim K} \frac{d}{d_{KL}} \Bigl(\sqrt{\rho_K^n}-\sqrt{\rho_L^n}\Bigr) \nu_{KL} \cdot \int_{t^{n}}^{t^{n+1}} \int_{D_{KL}} (\varphi-\varphi_{KL}^n)\dx{x}\dx{t}\right|\\
& \lesssim h \|\varphi\|_{C^1} \dt \sum_n  \sum_K \sum_{L\sim K} |K\edge L|\,\Bigr|\sqrt{\rho_L^n}-\sqrt{\rho_K^n}\Bigl|\\
&\lesssim h \|\varphi\|_{C^1} \sqrt{T} \biggl(\dt \sum_n  \sum_K \sum_{L\sim K}\frac{|K\edge L|}{d_{KL}} \Bigl(\sqrt{\rho_K^n} - \sqrt{\rho_L^n}\Bigr)^2\biggr)^{\frac12}\biggl(\sum_K\sum_{L\sim K} d_{KL}|K\edge L|\biggr)^{\frac12}.
\end{align*}
By \eqref{Fisher_information} and the isoperimetric property \eqref{3}, the right-hand side vanishes as $h\to 0$. Therefore, we have 
\[
\int_0^T \int_{\Omega} \sqrt{\rho_{h,\dt}} \, \div \varphi\dx{x}\dx{t}  = - \int_0^T \int_{\Omega} \grad^h \sqrt{ \rho_{h,\dt}} \cdot \varphi\dx{x}\dx{t}+o(1) \qquad\text{ as } h\to 0 \:.
\]
We notice that $\sqrt{\rho_{h,\dt}}\to \sqrt{\rho}$ in $L^2((0,T)\times \Omega)$ thanks to the elementary inequality $(\sqrt{a}-\sqrt{b})^2\le |a-b|$.  Therefore, sending $h$ to zero, we thus obtain
\[
\int_0^T \int_{\Omega} \sqrt{\rho}\,\div \varphi\dx{x}\dx{t} = -\int_0^T\int_{\Omega}f\cdot  \varphi  \dx{x}\dx{t}.
\]
This statement implies that $f = \grad\!\sqrt\rho$ since $\varphi$ was arbitrary.

Furthermore, by the Cauchy--Schwarz inequality and the fact that $\rho$ is a probability distribution, we observe that
\[
\int_0^T\left(\int_{\Omega} |\grad \rho|\dx{x}\right)^2 \dx{t} \le \int_0^T  \int_{\Omega} \frac{|\grad\rho|^2}{\rho}\dx{x}\dx{t} = 4\int_0^T\int_{\Omega}\bigl|\grad \sqrt{\rho}\bigr|^2\dx{x}\dx{t},
\]
and thus $\grad \rho \in L^2((0,T);L^1(\Omega))$. 

In order to show the convergence of the gradients,  notice that thanks to the bound in~\eqref{spatial_gradient}, we have an estimate on the spatial gradient. Indeed, using  formula \eqref{300}, we observe that
\[
\int_{\dt}^T \left(\int_{\Omega} |\grad^h\rho_{h,\dt}|\dx{x}\right)^2 \dx{t}  = \frac{\dt}{4} \sum_{n=1}^N \left(\sum_K \sum_{L\sim K} |K\edge L| \, |\rho_L^n-\rho_K^n|\right)^2.
\]
It thus follows that, for every positive $\eps$, there exists a vector-valued measure $g \in L^2((\eps,T);\mathcal{M}( \Omega;\R^d))$ and  a subsequence (not relabeled), such that
\[
\grad^h \rho_{h,\dt} \to g\qquad \text{weakly-$*$ in }L^2((\eps,T);\mathcal{M}( \Omega)).
\]
If we know that $g = \grad \rho$,   the desired convergence result immediately follows because we have just seen that $\grad \rho\in L^2((0,T);L^1(\Omega))$. We skip the argument for this missing ingredient, as it closely resembles the previous part of the proof. Notice only that the gradient bound in \eqref{spatial_gradient} has to be used instead of the estimate on the Fisher information in \eqref{Fisher_information}.
\end{proof}
We are now in the position to prove the convergence of the scheme, which is the first statement of Theorem \ref{thm:convergence}. The convergence of the stationary solutions will be addressed afterwards.
\begin{proof}[Proof of Theorem \ref{thm:convergence}. Convergence of the scheme.]
The compactness of the sequence is established in Proposition \ref{prop:compactness}.
Notice also that the sequence of the initial data $\rho_{h}^0$ converges in $L^1(\Omega)$ towards $\rho_0$ by construction. It remains thus to prove that the limit function $\rho$ solves the continuous equation \eqref{e:agg-diff}. For this purpose, we choose a subsequence of time steps $\dt$ such that   $N\dt = T$ for some integer $N$, consider a test function $\zeta\in C_c^{\infty}([0,T)\times \R^d)$, and suppose that $\dt$ is sufficiently small such that  $\zeta(t)=0$ for $t\ge T -2 \dt$.  We set $\zeta^n_K = \zeta(t^n,x_K)$, and write $\zeta_{h,\dt} = \zeta^n_K$ in $(t^n,t^{n+1})\times K$. 

Testing the discrete equation \eqref{e:time:flux} with $
\dt\, \zeta_K^n$ and using the flux identity \eqref{e:flux:identity2} gives
\begin{align*}
 0 & = \sum_{n=0}^{N-1} \sum_K |K| \,\zeta_K^n \, \bigl(\rho_K^{n+1}-\rho_K^n\bigr) + \dt \sum_{n=0}^{N-1}\sum_K \sum_{L\sim K}  |K\edge L|  \, \zeta_K^n \, q_{KL}^{n+1}\frac{\rho_K^{n+1}+\rho_L^{n+1}}2\\
&\qquad  +  \frac{\dt}{2} \sum_{n=0}^{N-1}\sum_K \sum_{L\sim K} |K\edge L|  \, \zeta_K^n \, q_{KL}^{n+1}\coth\biggl(\frac{d_{KL}q_{KL}^{n+1}}{2\kappa}\biggr)\bigl(\rho_K^{n+1}-\rho_L^{n+1}\bigr)\\
&= I^h_1 + I^h_2 + I^h_3.
\end{align*}
\noindent\emph{Convergence of $I_1^h$}:  We first turn to the convergence of the term that involves the time derivative. Performing a discrete integration by parts in the time variable, we have that
\begin{align*}
I_1^h & = -\sum_{n=1}^{N-1} \sum_K |K| (\zeta_K^n-\zeta_K^{n-1}) \rho_K^n -\sum_K |K|\zeta_K^0\rho_K^0 + \sum_K |K| \zeta_K^{N-1}\rho_K^N.
\end{align*}
Notice that by our choice of $\zeta$, it holds that $\zeta_K^{N-1} =\zeta(t^{N-1},x_K) = \zeta((N-1)\dt,x_K)=0$, and thus, the last term on the right-hand side vanishes. For the middle term, we observe that $\zeta_K^0 = \zeta(0,x_K) = \zeta(0,x) + O(|x-x_K|)$ for any $x\in K$ by the smoothness of the test function, and thus, because $\rho_0$ is a probability distribution,
\[
\sum_K |K| \zeta_K^0 \rho_K^0 = \sum_K \int_K \zeta(0,x)\rho_0(x)\dx{x} + O(h) = \int_{ \Omega}\zeta(0,x)\rho_0(x)\dx{x} + O(h).
\]
Finally, using the smoothness of $\zeta$ again, we expand $\zeta_K^n-\zeta^{n-1}_K = \partial_t\zeta(t^{n-1},x_K)\dt +O(\dt)^2  = \partial_t\zeta(t,x)\dt +O(\dt\, h)$ for any $(t,x)\in (t^n,t^{n+1})\times K$, and thus, similarly as before,
\begin{align*}
-\sum_{n=1}^{N-1}\sum_K |K|(\zeta_K^n-\zeta_K^{n-1})\rho_K^n& = -\sum_{n=1}^{N-1}\int_{t^n}^{t^{n-1}} \sum_K \int_K \frac{\zeta_K^n-\zeta_K^{n-1}}{\dt} \rho_{h,\dt}(t,x)\dx{x}\dx{t}\\
& = -\int_{\dt}^{T} \int_{\hat \Omega} \partial_t \zeta(t,x)\rho_{h,\dt}(t,x)\dx{x}\dx{t} +O(h).
\end{align*}
Because $\rho_{h,\dt}$ is converging stongly in $L^1((0,T)\times \Omega)$ and because
\begin{equation}\label{302}
\int_{\hat \Omega\setminus\Omega} \rho_{h,\dt}\dx{x} = \int_{\Omega} (\rho-\rho_{h\,dt})\dx{x},
\end{equation}
since both $\rho$ and $\rho_{h,\dt}$ are probability distributions, we find that
\[
I_1^h \to - \int_0^T \int_{\Omega}\partial_t \zeta \, \rho\dx{x}\dx{t} -\int_{\Omega} \zeta(0,x)\rho_0(x)\dx{x} , \qquad\text{as } h\to0 \:. 
\]
\noindent\emph{Convergence of $I_2^h$}: By a discrete integration by parts, we have
\begin{align*}
I_2^h & =\frac{\dt}{2} \sum_n  \sum_K \sum_{L\sim K} |K\edge L| \bigl(\zeta_K^n-\zeta_L^n\bigr) q_{KL}^{n+1}\frac{\rho_K^{n+1}+\rho_L^{n+1}}2\\
& = \frac{\dt}{2} \sum_n \sum_K \sum_{L\sim K}|K\edge L| \bigl(\zeta_K^n-\zeta_L^n\bigr) q_{KL}^{n+1}\rho_K^{n+1}\\
&\qquad + \frac{\dt}{2} \sum_n \sum_K \sum_{L\sim K}|K\edge L| \bigl(\zeta_K^n-\zeta_L^n\bigr)q_{KL}^{n+1}\frac{\rho_L^{n+1}-\rho_K^{n+1}}2.
\end{align*}
As a consequence of the Lipschitz property of the interaction potential \eqref{A3} and the fact that the numerical solution is a probability distribution, it holds that $|q_{KL}^{n+1}|\le \Lip(W)$. Moreover, by the smoothness of $\zeta$, we have that $|\zeta_K^n-\zeta_L^n|\le 2h \|\grad\zeta\|_{C^0}$, and thus
\begin{align*}
\MoveEqLeft \biggl|\dt\sum_n  \sum_K \sum_{L\sim K} |K\edge L| \bigl(\zeta_K^n-\zeta_L^n\bigr) q_{KL^{n+1}} \bigl(\rho_L^{n+1}-\rho_K^{n+1}\bigr)\biggr|\\
& \lesssim h \|\grad \zeta\|_{C^0}\Lip(W) \sum_n \dt \sum_K |K\edge L| \, \bigl|\rho_L^{n+1}-\rho_K^{n+1}\bigr|,
\end{align*}
and the right-hand side is of order $O(h)$ by the virtue of the gradient estimate in \eqref{spatial_gradient}.  Therefore,
\[
I_2^h    = \frac{\dt}{2} \sum_n  \sum_K \sum_{L\sim K}|K\edge L| \bigl(\zeta_K^n-\zeta_L^n\bigr) q_{KL}^{n+1}\,\rho_K^{n+1} +o(1) \qquad\text{ as } h\to0 . 
\]
Similarly for the time variable, we estimate slightly more carefully $|\zeta_K^n-\zeta_L^n|\le d_{KL}\|\grad\zeta\|_{C^0}$ and use  identity \eqref{300} to arrive at
\begin{align*}
\MoveEqLeft \biggl| \frac12 \sum_n \sum_K  \sum_{L\sim K}\int_{t^n}^{t^{n+1}} \int_{D_{KL}\cap \Omega} \bigl(\rho_{h,\dt}(t+\dt,x)-\rho(t,x)\bigr)\dx{x}\dx{t} \,  \frac{|K\edge L|}{|D_{KL}|}(\zeta_K^n-\zeta_L^n)\, q_{KL}^{n+1}\biggr| \\
&\le d \|\grad \zeta\|_{C^0}\Lip(W) \int_0^T    \int_{\Omega} \bigl|\rho_{h,\dt}(t+\dt,x) - \rho(t,x)\bigr|\dx{x}\dx{t} .
\end{align*}
In a similar manner, by the virtue of \eqref{302}, it holds that 
\begin{align*}
\MoveEqLeft \left| \frac12 \sum_n \sum_K  \sum_{L\sim K}\int_{t^n}^{t^{n+1}} \int_{D_{KL}\setminus \Omega}  \rho_{h,\dt}(t+\dt,x) \dx{x}\dx{t} \, \frac{|K\edge L|}{|D_{KL}|}(\zeta_K^n-\zeta_L^n) q_{KL}^{n+1}\right| \\
&\le d \|\grad \zeta\|_{C^0}\Lip(W) \int_0^T    \int_{\Omega} \bigl|\rho_{h,\dt}(t+\dt,x) - \rho(t+\dt,x)\bigr|\dx{x}\dx{t} .
\end{align*}
As a consequence of the strong convergence established above and the continuity in time of approximate solutions \eqref{translation_time}, we see that in both estimates the right-hand side vanishes as $h$ and $\dt$ converge to 0.  Using \eqref{300} again, we thus conclude  that
\[
I_2^h =  \sum_n \sum_K  \sum_{L\sim K} \int_{t^n}^{t^{n+1}} \int_{D_{KL}\cap \Omega} \rho(t,x)\dx{x}\dx{t}  \frac{d}{d_{KL}} (\zeta_K^n-\zeta_L^n) q_{KL}^{n+1} +o(1), \qquad\text{as } h\to 0 \:.
\]
To estimate the flux term, we let $\eps>0$ be an arbitrarily fixed number and suppose that $h$ is small such that $h<\eps$. We start by noticing that, because $W$ is differentiable away from the origin by \eqref{A2}, we have the expansion
\[
W(x_K-x_J) = W(x_L-x_J) + \grad W(x_L-x_J)\cdot (x_K-x_L) + o(d_{KL}),
\]
for any cell $J$ that satisfies $\dist(J,x_K)\ge 2\eps$ and then also $\dist(J,x_L)\ge \eps$. 
Thus, using the continuity of $\grad W$ away from the origin, cf.~\eqref{A2}, as $d_{KL}\le 2h $, we have for any $x \in K$ and $y \in J$ that
\[
\frac{W(x_K-x_J) - W(x_L-x_J)}{d_{KL}} = \grad W(x-y) \cdot \frac{x_K-x_L}{|x_K-x_L|} +o(1), \qquad\text{as } h\to 0 \:.
\]
Therefore, we have to leading order, using in addition the Lipschitz property of $W$ in \eqref{A3} and the fact that $\rho_{h,\dt}$ is a probability distribution,
\begin{align*}
\MoveEqLeft \biggl| q_{KL}^{n+1} +\sum_{J} \frac{\rho^{n+1}_J + \rho_J^n}{2} \int_J \grad W(x-y)\dx{y}\cdot \nu_{KL} \biggr|\\
&\le 2 \Lip(W) \sum_{J\text{ s.t.\ }\dist(J,x_K)\le 2\eps} |J|\frac{\rho_J^{n+1}+\rho_J^n}2  +o(1)\\
&\le \Lip(W) \int_{B_{4\eps}(x_K)} \bigl( \rho_{h,\dt}(t+\dt,y)+\rho_{h,\dt}(t,y) \bigr) \dx{y} +o(1),
\end{align*}
for any $t\in [t^n,t^{n+1}]$ and any fixed $\eps$, as $h\to0$. It remains to replace the approximate solution in the convolution integral. Doing so, we obtain for any $t\in [t^n,t^{n+1}]$ and $x\in K$,
\begin{align*}
\MoveEqLeft\biggl|q^{n+1}_{KL} + \int_{\Omega}\rho(t,y)\grad W(x-y)\dx{y} \cdot \nu_{KL}\biggr|\\
& \lesssim \Lip(W) \int_{\Omega} \bigl|\rho_{h,\dt}(t+\dt,y) +\rho_{h,\dt}(t,y)-2\rho(t,y)\bigr|\dx{y}\\
&\quad + \Lip(W) \int_{\hat \Omega\setminus\Omega} \bigl(\rho_{h,\dt}(t+\dt,y)+\rho_{h,\dt}(t,y)\bigr)\dx{y}\\
&\quad+\Lip(W)  \int_{B_{4\eps}(x_K)\cap \Omega} \bigl( \rho_{h,\dt}(t+\dt,y)+\rho_{h,\dt}(t,y)\bigr)\dx{y} +o(1),
\end{align*}
as $h\to 0$. Thanks to the continuity in time \eqref{translation_time},  the previously established convergence of the approximating sequence, and identity \eqref{302}, and using $|\zeta_K^n-\zeta_L^n| \le d_{KL} \|\grad \zeta\|_{C^0}$ again, we arrive at
\begin{align*}
\MoveEqLeft \biggl| I_2^h -  \sum_n \sum_K \sum_{L\sim K} \int_{t^n}^{t^{n+1}} \int_{D_{KL}\cap \Omega} \rho(t,x) (\grad W\ast\rho)(t,x)\dx{x}\dx{t} \cdot  \frac{d}{d_{KL}} (\zeta_L^n-\zeta_K^n) \nu_{KL} \biggr|\\
&\lesssim \Lip(W) \int_0^T \int_{B_{4\eps}(x_K)\cap \Omega}\rho \dx{y}\dx{t} +o(1), \qquad\text{as } h\to 0 \:.
\end{align*}
Alternatively, using the definition of the discrete gradient in \eqref{12}, this estimate can be written as
\begin{align*}
 \left| I_2^h-\int_0^{T }  \int_{\Omega} \rho  (\grad W\ast \rho)\cdot \grad^h\zeta_{h,\dt}\dx{x}\dx{t} \right|\lesssim \Lip(W)  \int_0^T\int_{B_{4\eps}(x_K)\cap \Omega}\rho \dx{y} \dx{t}+o(1),
\end{align*}
as $h\to 0$. Apparently, the ($h$-indenpendent) first term on the right-hand side vanishes as $\eps\to0$. Therefore, because $\rho \grad W\ast \rho\in L^1((0,T)\times \Omega)$ as a consequence of \eqref{A3}, we may invoke Lemma \ref{L3} and find that
\[
I_2^h \to \int_0^T \int_{\Omega} \rho(\grad W\ast \rho)\cdot \grad \zeta\dx{x}\dx{t}, \qquad\text{as } h\to 0 \:.
\]
\noindent\emph{Convergence of $I_3^h$}: Again, we start with a discrete integration by parts to rewrite $I_3^h$,
\[
I_3^h = \frac{\dt}{4} \sum_n \sum_K \sum_{L\sim K} |K\edge L| \bigl(\zeta_K^n-\zeta_L^n\bigr) \, q_{KL}^{n+1}\coth\biggl(\frac{d_{KL}q_{KL}^{n+1}}{2\kappa}\biggr) \bigl(\rho_K^{n+1} - \rho_L^{n+1}\bigr).
\]
In order to get rid of the nonlinearity, we notice that the function $s\mapsto s\coth(s)$ is regular at the origin and $s\coth(s) = 1 +O(s^2)$ as $s\to 0$. Then, by using the bounds $|q_{KL}^{n+1}|\le \Lip(W)$ and $|\zeta_K^n-\zeta_L^n|\le 2h \|\grad \zeta\|_{C^0}$ as before, we have that the second order contribution is bounded by
\begin{align*}
\MoveEqLeft \frac{\dt}{\kappa}\sum_n \sum_K \sum_{L\sim K}|K\edge L| \, |\zeta_K^{n}-\zeta_L^{n}|\, d_{KL} \, |q_{KL}^{n+1}|^2\, |\rho_K^{n+1}-\rho_L^{n+1}| \\
&\lesssim \frac{h^2 \Lip(W)^2 \dt}{\kappa} \|\grad \zeta\|_{C^0}  \sum_n \sum_K \sum_{L\sim K} |K\edge L| \, |\rho_K^{n+1}-\rho_L^{n+1}| \\
&\leq \frac{h^2 \Lip(W)^2}{\kappa^2} \|\grad \zeta\|_{C^0}  \biggl(\kappa^2 \dt \, T  \sum_n \Bigl( \sum_K \sum_{L\sim K} |K\edge L| \, |\rho_K^{n+1}-\rho_L^{n+1}| \Bigr)^2\biggr)^{\frac{1}{2}} 
\end{align*}
and the right-hand side converges to $0$ as $h\to 0$ by the virtue of \eqref{spatial_gradient}. Therefore, we have to leading order
\[
I_3^h  = \frac{\kappa \dt}2 \sum_n\sum_K \sum_{L\sim K} \frac{|K\edge L|}{d_{KL}} (\zeta_K^n-\zeta_L^n) (\rho_K^{n+1}-\rho_L^{n+1}) +o(1), \qquad\text{as } h\to 0 . 
\]
Using the regularity of $\zeta$ again, we find that $\zeta_K^n-\zeta_L^n = \grad\zeta(t,x) \cdot(x_k-x_L) +O(h d_{KL})$ for any $(t,x) \in (t^{n+1},t^{n+2})\times D_{KL}$, where we have used that $d_{KL},\dt\lesssim h$. Here, the time interval $(t^{n+1},t^{n+2})$ is chosen such that the gradients $\nabla\zeta$ and $\nabla^h\rho_{h,\dt}$ are evaluated at the same time points. Indeed, we observe from~ \eqref{300}, that it holds 
\begin{align*}
\MoveEqLeft \Biggl|\frac{\kappa \dt}2 \sum_n \sum_K \sum_{L\sim K} \frac{|K\edge L|}{d_{KL}} (\zeta_K^n-\zeta_L^n)(\rho_K^{n+1}-\rho_L^{n+1}) \\
& + \kappa d \sum_n \sum_K\sum_{L\sim K}\int_{t^{n+1}}^{t^{n+2}} \int_{D_{KL}} \grad \zeta(t,x)\dx{x}\dx{t}\cdot \nu_{KL}\frac{\rho_K^{n+1}-\rho_L^{n+1}}{d_{KL}}\Biggr|\\
&\qquad \lesssim h\kappa \dt \sum_n \sum_K\sum_{L\sim K} |K\edge L|\,|\rho_K^{n+1}-\rho_L^{n+1}| \,,
\end{align*}
and the right-hand side vanishes as $h\to0$ thanks to the gradient estimate in \eqref{spatial_gradient}. Therefore, using the definition of the discrete gradient in \eqref{12},
\begin{align*}
I_3^h  & = - \kappa d \sum_n \sum_K\sum_{L\sim K}\int_{t^{n+1}}^{t^{n+2}} \int_{D_{KL}} \grad \zeta(t,x)\dx{x}\dx{t}\cdot \nu_{KL}\frac{\rho_K^{n+1}-\rho_L^{n+1}}{d_{KL}} +o(1)\\
& = \kappa \int_{\dt}^T \int_{\hat \Omega} \grad\zeta(t,x)\cdot \grad^h\rho_{h,\dt}(t,x)\dx{x}\dx{t} +o(1) , \qquad\text{as } h\to 0 . 
\end{align*}
As the final step, we implement the gradient convergence established in Proposition \ref{prop:compactness}, and conclude that
\[
I_3^h \to \kappa \int_0^T \int_{\Omega} \grad \zeta\cdot \grad \rho\dx{x}\dx{t} , \qquad\text{as } h\to 0 . 
\] 
\noindent\emph{Summary.} Putting together the convergence results for $I_1^h$, $I_2^h$ and $I_3^h$, passing to the limit in the identity $I_1^h + I_2^h +I_3^h=0$ yields
\begin{align*}
\MoveEqLeft - \int_0^T \int_{\Omega}\partial_t \zeta \rho\dx{x}\dx{t}    + \int_0^T \int_{\Omega} \rho(\grad W\ast \rho)\cdot \grad \zeta\dx{x}\dx{t} +\kappa \int_0^T \int_{\Omega} \grad \zeta\cdot \grad \rho\dx{x}\dx{t}\\
& =\int_{\Omega} \zeta(0,x)\rho^0(x)\dx{x},
\end{align*}
for any test function $\zeta$. This is the distributional formulation of the aggregation-diffusion equation.
\end{proof}
Similar to the convergence of the scheme, the convergence of stationary solutions is based on the Riesz--Fr\'echet--Kolmogorov compactness theorem. As a preparation, we derive estimates on discrete gradients.
\begin{lemma}\label{L10}
Let $\{\rho_K\}_K$ be a stationary solution of the Scharfetter--Gummel scheme. Then it holds
\begin{equation}\label{400}
\sum_K\sum_{L\sim K}|K\edge L|\, |\rho_K-\rho_L| \lesssim \frac{\Lip(W)}{\kappa}.
\end{equation}
\end{lemma}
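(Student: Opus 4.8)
The plan is to bypass the stationarity equation~\eqref{e:stat:def} and instead exploit the Kirkwood--Monroe characterization from Theorem~\ref{thm:stat}, which supplies the closed-form representation
\[
\rho_K = \frac{e^{-\kappa^{-1} V_K}}{Z^h(\rho)}, \qquad V_K := \sum_J |J|\, \rho_J\, W(x_K - x_J),
\]
and in particular guarantees $\rho_K>0$ for every cell. This reduces the estimate~\eqref{400} to controlling differences of exponentials along neighboring cells, which is far more transparent than manipulating $\theta_\kappa$ directly.

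First I would bound the increments of the exponent. Using the Lipschitz property~\eqref{A3} of $W$ and the fact that $\rho$ is a probability distribution,
\[
|V_K - V_L| \le \sum_J |J|\,\rho_J\, \bigl|W(x_K-x_J) - W(x_L-x_J)\bigr| \le \Lip(W)\, d_{KL} \sum_J |J|\,\rho_J = \Lip(W)\, d_{KL}.
\]
Next, with the elementary inequality $|e^{-a}-e^{-b}| \le |a-b|\,\max\{e^{-a},e^{-b}\}$ (immediate from the mean value theorem and monotonicity of the exponential), applied with $a = \kappa^{-1}V_K$ and $b=\kappa^{-1}V_L$ and recalling $e^{-\kappa^{-1}V_K}/Z^h(\rho)=\rho_K$, I arrive at the pointwise gradient bound
\[
|\rho_K - \rho_L| \le \kappa^{-1}\, |V_K - V_L|\, \max\{\rho_K,\rho_L\} \le \frac{\Lip(W)}{\kappa}\, d_{KL}\, \max\{\rho_K,\rho_L\}.
\]

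It then remains to sum over all edges. Estimating $\max\{\rho_K,\rho_L\}\le \rho_K+\rho_L$ and using that $|K\edge L|\,d_{KL}$ is symmetric under exchanging $K$ and $L$ (a relabeling argument), I obtain
\[
\sum_K\sum_{L\sim K} |K\edge L|\,|\rho_K-\rho_L| \le \frac{2\Lip(W)}{\kappa} \sum_K \rho_K \sum_{L\sim K} |K\edge L|\, d_{KL}.
\]
Since $d_{KL}\le 2h$ and $\sum_{L\sim K}|K\edge L| = |\partial K|$, the isoperimetric property~\eqref{3} gives $\sum_{L\sim K}|K\edge L|\, d_{KL} \le 2h\,|\partial K| \le 2C_{\iso}|K|$, and a final use of $\sum_K |K|\rho_K = 1$ yields~\eqref{400} with implicit constant $4C_{\iso}$.

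The argument is a chain of elementary estimates, so I do not expect a genuine obstacle. The only points needing mild care are the correct treatment of the factor $\max\{\rho_K,\rho_L\}$ via symmetrization, and the conversion of the geometric sum $\sum_{L\sim K}|K\edge L|\, d_{KL}$ into a multiple of $|K|$ through the isoperimetric bound; both are standard once the Kirkwood--Monroe representation is invoked.
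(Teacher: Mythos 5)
Your proposal is correct and follows essentially the same route as the paper: both start from the Kirkwood--Monroe fixed-point representation of Theorem~\ref{thm:stat}, bound the increment of the exponent by $\Lip(W)\,d_{KL}$ via \eqref{A3}, apply an elementary Lipschitz estimate for the exponential (your $\max\{\rho_K,\rho_L\}$ versus the paper's $\rho_K+\rho_L$ is an immaterial variant), and conclude by relabeling, the isoperimetric property \eqref{3}, and normalization. No gaps.
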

\begin{proof}We recall that by Theorem \ref{thm:stat}, the stationary solutions obeys the equation
\[
\rho_K = \frac{1}{Z(\rho)} \exp\biggl(-\frac1{\kappa}\sum_J|J| \, W(x_K-x_J)\rho_J\biggr).
\]
Making use of the elementary estimate $|\exp(a) - \exp(b)| \le \bigl(\exp(a)+\exp(b)\bigr)\, |a-b|$, we thus have that
\[
|\rho_K-\rho_L| \le\frac1{\kappa} \left(\rho_K+\rho_L \right)\sum_J |J|\, \bigl|W(x_K-x_J)-W(x_L-x_J)\bigr|\rho_J.
\]
Because the aggregation potential is Lipschitz \eqref{A3}, and $\{\rho_K\}_K$ a probability distribution, we find
\[
|\rho_K-\rho_L| \le \frac{d_{KL}\Lip(W)}{\kappa} (\rho_K+\rho_L).
\]
It remains to apply a relabeling argument, the isoperimetric property \eqref{3} of the scheme, and, again, the fact that $\{\rho_K\}_K$ is a probability distribution to deduce that
\[
\sum_K\sum_{L\sim K}|K\edge L|\, |\rho_K-\rho_L| \lesssim \frac{\Lip(W)}{\kappa} . \qedhere
\]
\end{proof}
We now define the finite volume approximation $\rho_h$ of the stationary state $\{\rho_K\}_K$ by setting
\[
\rho_h = \rho_K\quad\text{in }K.
\]
We provide a continuity result. For this, we extend $\rho_h$ trivially to all of $\R^d$.
\begin{lemma}\label{L11}
For  $\eta\in \R^d$, it holds that
\begin{equation}\label{402}
  \int_{\R^d} |\rho_{h}(x+\eta) - \rho_{h}(x)|\dx{x}   = o(1),
\end{equation}
as $|\eta|\to0$, uniformly in $h$.
\end{lemma}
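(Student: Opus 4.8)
The plan is to follow the proof of the spatial translation estimate \eqref{translation_space} in Lemma~\ref{prop:translations} almost verbatim, with two simplifications. Because a stationary state carries no time variable, I would drop both the time integration and the squaring that appear there, and I would invoke the single gradient bound \eqref{400} of Lemma~\ref{L10} in place of the time-integrated estimate \eqref{spatial_gradient}.

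Concretely, assuming $|\eta|\le 1$ (legitimate since $\rho_h$ is a probability density supported in $\hat\Omega$), I would split the integral as in \eqref{301} into an interior contribution over $\{x\in\hat\Omega,\ x+\eta\in\hat\Omega\}$ and a boundary contribution over $\{x\in\hat\Omega,\ x\pm\eta\notin\hat\Omega\}$. For the interior term I would reuse the segment-crossing indicator $\chi_{KL}$ together with the geometric inequality $\int_{\hat\Omega}\chi_{KL}(x,x+\eta)\dx{x}\le|K\edge L|\,|\eta|$; the triangle inequality then bounds this contribution by $|\eta|\sum_K\sum_{L\sim K}|K\edge L|\,|\rho_K-\rho_L|$, which by \eqref{400} is $O(|\eta|)$ uniformly in $h$.

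For the boundary contribution, the decisive simplification relative to the nonstationary case is that stationary states are uniformly bounded in $L^\infty$. Indeed, by Theorem~\ref{thm:stat} any stationary $\rho$ solves the fixed point equation \eqref{e:stat:fixed_point}, and since $W\ge 0$ by \eqref{A0} is bounded on the bounded domain and $\rho$ is a probability measure, the exponent there lies in $[-\norm{W}_{L^\infty}/\kappa,0]$; hence $Z^h(\rho)\gtrsim|\hat\Omega|\,e^{-\norm{W}_{L^\infty}/\kappa}$ and $\rho_K\le e^{\norm{W}_{L^\infty}/\kappa}/|\hat\Omega|\lesssim 1$ uniformly in $h$. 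Bounding $\int_{\{x\in\hat\Omega,\ x\pm\eta\notin\hat\Omega\}}\rho_h\dx{x}\le 2\norm{\rho_h}_{L^\infty}\,|\{\dist(\cdot,\partial\hat\Omega)\le|\eta|\}|$, the boundary term is then $O(|\eta|)$ as well. Alternatively one could retain the Jensen-type argument of Lemma~\ref{prop:translations} based on the free energy bound; the $L^\infty$ estimate just renders it superfluous.

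The only step I would treat with care is the uniform-in-$h$ control of the collar measure, $|\{\dist(\cdot,\partial\hat\Omega)\le|\eta|\}|\lesssim|\eta|$, which I expect to follow from the smoothness of $\Omega$ together with the mesh regularity \eqref{3} guaranteeing a perimeter of $\hat\Omega$ bounded uniformly as $h\to 0$; the remainder is a direct transcription of the interior estimate. Summing the two contributions yields $\int_{\R^d}|\rho_h(x+\eta)-\rho_h(x)|\dx{x}\lesssim|\eta|=o(1)$ as $|\eta|\to 0$, uniformly in $h$, which is the claim.
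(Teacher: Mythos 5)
Your proof is correct, and it matches the paper's argument on the main term: the paper likewise reduces Lemma~\ref{L11} to a rerun of the proof of \eqref{translation_space} in Lemma~\ref{prop:translations}, with the interior contribution handled exactly as you do, via the segment-crossing indicator and the stationary gradient bound \eqref{400} in place of \eqref{spatial_gradient}. Where you genuinely diverge is the boundary term. The paper only extracts from the fixed-point characterization the uniform entropy bound \eqref{401} (via the pointwise estimate \eqref{403} on $\kappa\abs{\log\rho_K}$) and then reuses the Jensen argument with $\varphi(z)=z\max\set{\log z,1}$ verbatim; you instead observe that \eqref{e:stat:fixed_point} together with $W\geq 0$ and the normalization gives a uniform $L^\infty$ bound $\rho_K\leq e^{\norm{W}_{L^\infty}/\kappa}/\lvert\hatOmega\rvert$, which makes the boundary term $O(\abs{\eta})$ outright. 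This is legitimate and in fact slightly stronger than what the paper uses --- note that \eqref{403} already bounds $\abs{\log\rho_K}$, hence $\norm{\rho_h}_{L^\infty}$, uniformly in $h$, so your shortcut is implicit in the paper's own computation; its only cost is that it does not generalize to the time-dependent setting, where no $L^\infty$ bound is available and the entropy/Jensen route is unavoidable. The one point you rightly flag, the collar estimate $\abs{\set{\dist(\cdot,\partial\hatOmega)\leq\abs{\eta}}}\lesssim\abs{\eta}$, deserves the remark that what one actually gets from $\Omega\subset\hatOmega\subset\set{\dist(\cdot,\Omega)\leq h}$ is a bound of order $h+\abs{\eta}$ (every point of $\partial\hatOmega$ lies within distance $h$ of the smooth $\partial\Omega$); this is harmless for the compactness application and is exactly the same imprecision present in the paper's treatment of $\abs{A_\eta}$ in Lemma~\ref{prop:translations}, so it is not a gap relative to the paper.
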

\begin{proof}
We first notice that stationary states have finite entropy, more precisely,
\begin{equation}\label{401}
\sum_K |K| \rho_K\log\rho_K \lesssim 1,
\end{equation}
uniformly in $h$. Indeed, in view of the properties \eqref{A0} and   \eqref{A3} of the potential and the fact that $\{\rho_K\}_K$ is a probability distribution, it holds that
\[
  \sum_J |J|\, |W(x_K-x_J)| \rho_J \le \Lip(W) \diam(\hat \Omega) \sum_J |J|\rho_J=\Lip(W) \diam(\hat \Omega).
\]
Therefore, invoking the characterization of stationary states in Theorem \ref{thm:stat}, we deduce that
\begin{align}\label{403}
\kappa |\log\rho_K| \le 2 \Lip(W) \diam(\hat \Omega) +  \log |\hat \Omega| .
\end{align}
Using again that $\{\rho_K\}_K$ is a probability distribution, \eqref{401} follows immediately.

The proof of \eqref{402} now follows almost identical to the one in the time-dependent setting, \eqref{translation_space}. We thus omit the details.
\end{proof}
We finally provide the argument for the second statement of Theorem \ref{thm:convergence}.
\begin{proof}[Proof of Theorem \ref{thm:convergence}. Convergence of stationary states.]
We first remark that as a consequence of Lemma \ref{L11} and the Riesz--Fr\'echet--Kolmogorov theorem, the sequence $\{\rho_h\}_h$ is compact in $L^1(\Omega)$, and thus, there exists $\rho \in L^1(\Omega)$ such that $\rho_h\to\rho$ in $L^1(\Omega)$, as $h\to0$. Our goal is to show that $\rho$ is a stationary solution of the aggregation-diffusion equation~\eqref{e:agg-diff}.

As in the previous two lemmas, our starting point is the characterization of stationary states in Theorem \ref{thm:stat},
\begin{equation}\label{404}
\kappa \log \rho_K =  - \sum_L|L|\, W(x_K-x_L) \, \rho_L + \log Z^h(\rho_h).
\end{equation}
We have seen in the proof of Lemma \ref{L11} that the right-hand side of this identity is bounded uniformly in $h$, see \eqref{403}, therefore, by the just established $L^1$ convergence and the dominated convergence theorem, it follows that
\[
\|\kappa \log\rho_h - \kappa \log \rho\|_{L^1(\Omega)} \to 0\qquad\text{as }h\to 0.
\]
Regarding the convergence of the right-hand side of \eqref{404}, we notice that, for any $x\in K$, it holds
\begin{align*}
 \biggl|\sum_L |L|W(x_K-x_L)\rho_L -(W\ast\rho)(x)\biggr|
 & \le \sum_L \int_{L\cap\Omega} |W(x_K-x_L)| \, |\rho_L-\rho(y)|\dx{y}\\
 &\qquad +\sum_L \int_{L\cap\Omega} |W(x_K-x_L)-W(x-y)|\rho(y)\dx{y}\\
 &\qquad  +\sum_L \int_{L\setminus \Omega} |W(x_K-x_L)|\rho_L\dx{y},
 \end{align*}
 and thus, using the properties \eqref{A0} and \eqref{A3} of the aggregation potential and identity \eqref{302}, which also holds true in the stationary case, we obtain that
 \begin{align*}
\MoveEqLeft{  \biggl|\sum_L |L|\, W(x_K-x_L)\,\rho_L -(W\ast\rho)(x)\biggr|}\\
& \le 2\Lip(W) \diam(\Omega) \|\rho_h-\rho\|_{L^1(\Omega)} +h\Lip(W) \|\rho\|_{L^1(\Omega)} = o(1),
  \end{align*}
as $h\to0$. We easily deduce that
\[
\sum_K \int_K \biggl|\sum_L |L|W(x_K-x_L)\rho_L -(W\ast\rho)(x)\biggr|\dx{x} \to 0\qquad\text{as }h\to 0,
\]
or, in words, the first term on the right-hand side of \eqref{404} converges to $- W\ast\rho$ in $L^1(\Omega)$. 

We consider now a mean-zero test function $\varphi\in C_c^{\infty}(\Omega)$ and define its (mean-zero) finite-volume approximation, as usual, by $\varphi_h = \varphi_K = \frac{1}{\abs*{K}}\int_K\varphi\dx{x}$ on $K$. Then $\varphi_h\to\varphi$ uniformly on $\Omega$. Testing \eqref{404} by $\varphi_h$ then yields
\[
\sum_K |K| \left(\kappa \log\rho_K + \sum_L|L|W(x_K-x_L)\rho_L\right)\varphi_K=0,
\]
and passing to the limit $h\to0$ gives
\[
\int_{\Omega} \left(\kappa \log \rho + W\ast\rho\right)\varphi\dx{x} = 0,
\]
which is, since $\varphi$ was arbitrary and mean-zero, equivalent to 
\[
\kappa \log \rho + W\ast\rho  + \kappa \log Z(\rho)=0,
\]
where $Z(\rho) =  \int_{\Omega} \exp(-\kappa^{-1}W\ast\rho)\dx{x}$. This is a characterization of stationary solutions for the 
 continuous problem as can be verified by inspection of the free energy dissipation~\eqref{e:FED}.
\end{proof}

\section*{Acknowledgement}
The authors thank the anonymous referees, Mario Ohlberger and Ansgar Jüngel for fruitful comments on an earlier version of the manuscript.
This work is funded by the Deutsche Forschungsgemeinschaft (DFG, German Research Foundation) under Germany's Excellence Strategy EXC 2044 -- 390685587, \emph{Mathematics M\"unster: Dynamics--Geometry--Structure,} and EXC 2047 -- 390685813, \emph{Hausdorff Center for Mathematics},
as well as the Collaborative Research Center 1060 -- 211504053, \emph{The Mathematics of Emergent Effects} at the Universität Bonn. 

\bibliographystyle{abbrv}
\bibliography{scharfetter-gummel}
\end{document}